\def\myMRbibitem{\@ifnextchar[\my@lbibitem\my@bibitem}
\def\mybiblabel#1#2{\@biblabel{{\hyperref{http://www.ams.org/mathscinet-getitem?mr=#1}{}{}{#2}}}}
\def\myhyperanchor#1{\Hy@raisedlink{\hyper@anchorstart{cite.#1}\hyper@anchorend}}
\def\my@lbibitem[#1]#2#3#4\par{%
    \item[\mybiblabel{#2}{#1}\myhyperanchor{#3}\hfill]#4%
    \@ifundefined{ifbackrefparscan}{}{\BR@backref{#3}}%
    \if@filesw{\let\protect\noexpand\immediate
       \write\@auxout{\string\bibcite{#3}{#1}}}\fi\ignorespaces%
}
\def\my@bibitem#1#2#3\par{%
    \refstepcounter\@listctr
    \item[\mybiblabel{#1}{\the\value\@listctr}\myhyperanchor{#2}\hfill]#3%
    \@ifundefined{ifbackrefparscan}{}{\BR@backref{#2}}%
    \if@filesw\immediate\write\@auxout
        {\string\bibcite{#2}{\the\value\@listctr}}\fi\ignorespaces%
}
\renewcommand*{\backref}[1]{}
\renewcommand*{\backrefalt}[4]{\quad \tiny 
    \ifcase #1 (Not cited.)%
    \or        (Cited on page~#2.)%
    \else      (Cited on pages~#2.)%
    \fi}
\newtheorem{theorem}{Theorem} 
\newtheorem{otherthm}{Theorem}[section] 
\newtheorem{lemma}[otherthm]{Lemma}     
\newtheorem{corollary}[otherthm]{Corollary}
\newtheorem{proposition}[otherthm]{Proposition}
\providecommand{\bysame}{\makebox[3em]{\hrulefill}\thinspace} 
\newcommand{\R}{\mathbb{R}}
\newcommand{\Z}{\mathbb{Z}}
\newcommand{\N}{\mathbb{N}}
\newcommand{\sA}{\mathsf{A}}
\newcommand{\sB}{\mathsf{B}}
\newcommand{\sK}{\mathsf{K}}
\newcommand{\cD}{\mathcal{D}}
\newcommand{\cC}{\mathcal{C}}
\newcommand{\cK}{\mathcal{K}}
\newcommand{\cU}{\mathcal{U}}
\newcommand{\cR}{\mathcal{R}}
\DeclareMathOperator{\Int}{Int}
\newcommand{\tr}{\mathrm{tr}}
\newcommand{\subrad}{\check{\varrho}}
\newcommand{\jrad}{\hat{\varrho}}
\DeclareMathOperator{\codim}{codim}
\newcommand{\GL}{\mathit{GL}}
\newcommand{\SL}{\mathit{SL}}
\newcommand{\wed}{{\mathord{\wedge}}} 
\newcommand{\arxiv}[1]{Preprint \href{http://arxiv.org/abs/#1}{arXiv:{#1}}}
\numberwithin{equation}{section}         
\renewcommand{\epsilon}{\varepsilon}
\renewcommand{\setminus}{\smallsetminus}
\renewcommand{\angle}{\measuredangle}
\begin{document}

\title[Continuity of the lower spectral radius]{Continuity properties of the lower spectral radius}

\author{Jairo Bochi and Ian D.~Morris}

\date{\today}

\classno{15A60 (primary), 15B48, 37H15, 37D30, 47D03, 65F99 (secondary)}

\extraline{The first named author was partially supported by CNPq and Faperj during the preparation of this paper. The second named author gratefully acknowledges the hospitality of PUC-Rio and the support of the University of Surrey FEPS Research Support Fund.}

\maketitle

\abstract{
The lower spectral radius, or joint spectral subradius, of a set of real $d \times d$ matrices is defined to be the smallest possible exponential growth rate of long products of matrices drawn from that set. The lower spectral radius arises naturally in connection with a number of topics including combinatorics on words, the stability of linear inclusions in control theory, and the study of random Cantor sets. In this article we apply some ideas originating in the study of dominated splittings of linear cocycles over a dynamical system to  characterise the points of continuity of the lower spectral radius on the set of all compact sets of invertible $d \times d$ matrices. As an application we exhibit open sets of pairs of $2 \times 2$ matrices within which the analogue of the Lagarias--Wang finiteness property for the lower spectral radius fails on a residual set, and discuss some implications of this result for the computation of the lower spectral radius.
}

\section{Introduction}

\subsection{Background}
Recall that the spectral radius of a $d \times d$ real matrix $A$, which we shall denote by $\rho(A)$, is defined to be the maximum of the moduli of the eigenvalues of $A$ and satisfies Gelfand's formula
\[\rho(A)=\lim_{n \to \infty} \left\|A^n\right\|^{\frac{1}{n}}=\inf_{n \geq 1} \left\|A^n\right\|^{\frac{1}{n}}\]
for every operator norm $\|\mathord{\cdot}\|$ on the set $M_d(\R)$ of all $d \times d$ matrices. By analogy with this formula, the \emph{joint spectral radius} or \emph{upper spectral radius} of a bounded nonempty set $\sA$ of $d \times d$ matrices was defined by G.-C. Rota and G. Strang (\cite{RS}, reprinted in \cite{R03}) to be the quantity
$$
\jrad(\sA):=\lim_{n \to \infty} \sup\left\{\left\|A_n\cdots A_1\right\|^{\frac{1}{n}}\colon A \in \sA\right\}=\inf_{n \geq 1} \sup\left\{\left\|A_n\cdots A_1\right\|^{\frac{1}{n}}\colon A \in \sA\right\}
$$
which is likewise independent of the choice of operator norm $\|\mathord{\cdot}\|$ on $M_d(\R)$. Interest in the upper spectral radius was subsequently stimulated by applications in diverse areas such as control theory \cite{Ba,Gur95}, wavelet regularity \cite{DL,S}, combinatorics \cite{BCJ,DST} and coding theory \cite{BJP,MOS}. The upper spectral radius is currently the subject of lively research attention, of which we note for example \cite{BM,BN,BTV,DHX13,GP,GWZ,GZ,HMST,Jbook,M13,MS,PJB}
and references therein.

The \emph{joint spectral subradius}, or \emph{lower spectral radius}, of a nonempty set $\sA\subset M_d(\R)$ is similarly defined to be the quantity
$$\subrad(\sA):=\lim_{n \to \infty} \inf\left\{\left\|A_n\cdots A_1\right\|^{\frac{1}{n}}\colon A \in \sA\right\}
=\inf_{n \geq 1} \inf\left\{\left\|A_n\cdots A_1\right\|^{\frac{1}{n}}\colon A \in \sA\right\},
$$
but this concept appears not to have been introduced until much later \cite{Gur95}. Like the upper spectral radius, the lower spectral radius arises naturally in contexts such as control theory \cite{Gur95}, the regularity of fractal structures \cite{DK,Pr04}, and combinatorics \cite{BCJ,Pr00}. Whilst it has also been the subject of recent research attention \cite{BM,GP,J12,PJB} the volume of results is significantly smaller. 

The smaller size of this body of research on the lower spectral radius can perhaps be explained by the fact that the behaviour of the lower spectral radius is significantly less tractable than that of the upper spectral radius. For example, L.~Gurvits has demonstrated in \cite{Gur96} that for a general finite set $\sA$ of real $d \times d$ matrices it is possible to determine whether or not $\jrad(\sA)=0$ using a number of arithmetic operations which depends polynomially on $d$ and on the cardinality of $\sA$. On the other hand, T.~Neary has shown in \cite{N} that the problem of determining whether or not $\subrad(\sA)=0$ is algorithmically undecidable even when $\sA$ consists of a pair of $15 \times 15$ integer matrices (for related earlier results see also \cite{Pat,TB1,TB2}). Similarly, the upper spectral radius was shown in 1995 by C.~Heil and G.~Strang \cite{HS} to depend continuously on the set $\sA$, and this was subsequently strengthened to Lipschitz continuity when $\sA$ does not admit a common invariant subspace \cite{W,K10} or when $\sA$ admits a strictly invariant cone \cite{MW}. On the other hand, the lower spectral radius is in general only upper semi-continuous: see \cite[p.11--13, 20]{Jbook} or the discussion below.

The principal aim of this article is to examine in detail the continuity properties of the lower spectral radius, giving in particular a sufficient condition for Lipschitz continuity of the lower spectral radius in the neighbourhood of particular sets of matrices. We then give a general necessary and sufficient condition for the lower spectral radius to be continuous at a given set of matrices. As a subsequent application of these results we exhibit open sets of $k$-tuples of $2 \times 2$ matrices within which the lower spectral radius generically fails to be realised as the spectral radius of a finite product of matrices; for related research in the context of the upper spectral radius we note \cite{BM,BTV,HMST,LW,MS}. 

\subsection{Initial observations on continuity and discontinuity}

Let $\cK(M_d(\R))$ denote the set of all compact nonempty subsets of $M_d(\R)$, which we equip with the Hausdorff metric defined by
\[d_H\left(\sA,\sB\right):=\max\left\{\sup_{A \in \sA}\inf_{B \in \sB} \|A-B\|, \sup_{B \in \sB}\inf_{A \in \sA} \|A-B\|\right\},\]
where $\|\mathord{\cdot}\|$ denotes the Euclidean operator norm.
With respect to this metric $\cK(M_d(\R))$ is a complete metric space. We also let $\cK(\GL_d(\R))$, $\cK(\SL_d(\R))$, et cetera denote the set of all compact nonempty subsets of $\GL_d(\R)$, $\SL_d(\R)$, and so forth, which we equip with the same metric. We use the notation $\GL_d^+(\R)$ to denote the set of invertible $d \times d$ matrices with positive determinant.

\medskip

Given $\sA \in \cK(M_d(\R))$, an important theorem of M.A.~Berger and Y.~Wang \cite{BW} asserts that the upper spectral radius may be written as
$$\jrad(\sA)= \inf_{n \geq 1}\sup\left\{\left\|A_n\cdots A_1\right\|^{\frac{1}{n}}\colon A \in \sA\right\}
=  \sup_{n \geq 1}\sup \left\{\rho\left(A_n\cdots A_1\right)^{\frac{1}{n}}\colon A \in \sA\right\}.$$
The first of these two alternative expressions is an infimum of a sequence of functions each of which depends continuously on $\sA$, and such an infimum is necessarily upper semi-continuous. Conversely, the second expression is a supremum of continuous functions of $\sA$ and hence is lower semi-continuous. It follows from the equality between these two quantities that the joint spectral radius is a continuous function from $\cK(M_d(\R))$ to $\R$. As was previously indicated this observation originates with Heil and Strang \cite{HS}.

In the case of the lower spectral radius one may in a related manner write
\begin{equation}\label{eq:subrad-inf}
\subrad(\sA)= \inf_{n \geq 1}\inf\left\{\left\|A_n\cdots A_1\right\|^{\frac{1}{n}}\colon A \in \sA\right\}
=  \inf_{n \geq 1}\inf \left\{\rho\left(A_n\cdots A_1\right)^{\frac{1}{n}}\colon A \in \sA\right\},
\end{equation}
see for example \cite[p.11--13]{Jbook}. Crucially however this expresses $\subrad$ only as an infimum of continuous functions and not also as a supremum, so the upper semi-continuity of the lower spectral radius is guaranteed by this expression but its lower semi-continuity is not. R.M.~Jungers (\cite[p.20]{Jbook}) has previously noted an example involving non-invertible matrices where the lower spectral radius fails to be lower semi-continuous. In this article we will find the following very simple example to be particularly instructive:
\begin{example}\label{ex:simple}
Define
\[\sA:=\left\{\left(\begin{array}{cc}2&0\\0&\frac{1}{8}\end{array}\right),\left(\begin{array}{cc}1&0\\0&1\end{array}\right)\right\}.\]
Then the lower spectral radius is discontinuous at $\sA$. Specifically, if for each $n \geq 1$ we define
\[\sA_n:=\left\{\left(\begin{array}{cc}2&0\\0&\frac{1}{8}\end{array}\right),\left(\begin{array}{cc}\cos\frac{\pi}{2n}&-\sin\frac{\pi}{2n}\\\sin\frac{\pi}{2n}&\cos\frac{\pi}{2n}\end{array}\right)\right\}\]
then $\lim_{n \to \infty}\sA_n = \sA$, $\subrad(\sA)=1$, and $\subrad(\sA_n)= \frac{1}{2}$ for every $n \geq 1$.

Let us briefly justify these claims. It is clear that $\subrad(\sA)=1$ and that $\sA_n \to \sA$, so we must show that $\subrad(\sA_n)\equiv \frac{1}{2}$. Given $n \geq 1$ it is clear that for any product of elements of $\sA_n$, the spectral radius of that product is at least the square root of its determinant. Since the minimum of the determinants of the two matrices is $\frac{1}{4}$ it follows easily that $\subrad(\sA_n) \geq \frac{1}{2}$. On the other hand for each $m,n \geq 1$ we have
\[
\left(\begin{array}{cc}2&0\\0&\frac{1}{8}\end{array}\right)^m 
\left(\begin{array}{cc}\cos\frac{\pi}{2n}&-\sin\frac{\pi}{2n}\\\sin\frac{\pi}{2n}&\cos\frac{\pi}{2n}\end{array}\right)^n
= \left(\begin{array}{cc}0&-2^m\\\frac{1}{8^m}&0\end{array}\right)
\]
which has spectral radius $\frac{1}{2^m}$. Since $m$ may be taken arbitrarily large while $n$ remains fixed we deduce with the aid of \eqref{eq:subrad-inf} that ${\subrad(\sA_n)\equiv\frac{1}{2}}$. 
\end{example}

The above example suggests the following general mechanism for constructing discontinuities of the lower spectral radius. Given a set $\sA \in \cK(\GL_2(\R))$ it is clear that there exists an element $A_0$ of $\sA$ which minimises the absolute value of the determinant. Let us suppose furthermore that the lower spectral radius of $\sA$ is strictly greater than $|\det A_0|^{1/2}$. For this to be the case it is necessary that $A_0$ has distinct real eigenvalues. If there exists an additional element $B$ of $\sA$ whose eigenvalues agree or form a conjugate pair, then by applying a precise but arbitrarily small perturbation to $B$ we may arrange that some large power of $B$ maps the more expanding eigenspace of $A_0$ onto the more contacting eigenspace of $A_0$. By composing this power of $B$ with an even larger power of $A_0$ we may obtain long products whose spectral radii (when normalised for the length of the product) closely approximate $|\det A_0|^{1/2}<\subrad(\sA)$. In this manner we may obtain arbitrarily small perturbations of $\sA$ whose lower spectral radius is less than that of $\sA$ by a fixed amount.

In fact the condition that $\sA$ should contain a matrix $B$ whose eigenvalues are equal in modulus is much stronger than is required. In order to formulate our result we require the notion of \emph{domination}.

\subsection{Domination}

The notion of domination originated in the theory of ordinary differential equations, where it is known as `exponential separation' (see e.g.\ \cite{Palmer} and references therein). It was rediscovered in differentiable dynamics, where it played an important role in the solution of the Palis--Smale $C^1$-stability conjecture (see \cite[Appendix~B]{BDV} and references therein). In ergodic theory, the concept of domination is related to continuity properties of Lyapunov exponents: \cite{BoV}. Domination is also relevant to control theory \cite[\S~5.2]{CK}. We will not give the most general definition of domination, but instead we will use the characterisations better adapted to our context, which come from \cite{BG}. We use the notation $\sigma_1(A),\ldots,\sigma_d(A)$ to denote the singular values of the matrix $A \in M_d(\R)$, which are the square roots of the eigenvalues of the positive semidefinite matrix $A^*A$ listed in decreasing order according to multiplicity.

\begin{definition}\label{de:dominated}
Let $\sA \in \cK(\GL_d(\R))$ and suppose that $1 \leq k < d$. We say that $\sA$ is \emph{$k$-dominated}, or that $k$ is an \emph{index of domination for $\sA$},  if one of the following equivalent conditions holds. Either:
\begin{enumerate}[(a)]
\item\label{i:dom-singular}
There exist constants $C>1$, $\tau \in (0,1)$ such that
$$
\frac{\sigma_{k+1}(A_n \dots A_1)}{\sigma_{k}(A_n \dots A_1)} \leq C \tau^n
\quad \forall n \ge 1, \ \forall A_1, \dots, A_n \in \sA,
or
$$
\item\label{i:dom-multicone}
There exists a set $\cC \subset \R^d \setminus \{0\}$ with the following properties:
	\begin{enumerate}[(i)]
		\item $\cC$ is relatively closed in $\R^d\setminus \{0\}$;
		\item $\cC$ is homogeneous (i.e., closed under multiplication by nonzero scalars);
		\item the image set $\sA \cC := \bigcup_{A\in \sA} A(\cC)$ is contained in the interior of $\cC$;
		\item there exists a $k$-dimensional subspace of $\R^d$ which is contained in $\cC \cup\{0\}$;
		\item there exists a $(d-k)$-dimensional subspace of $\R^d$ which does not intersect any element of $\cC$.
	\end{enumerate}
\end{enumerate}
When (\ref{i:dom-multicone}) holds the set $\cC$ is called a \emph{$k$-multicone} for $\sA$. 

We shall also say that every set $\sA \in \cK(\GL_d(\R))$ is $d$-dominated.
\end{definition}

The equivalence of the different formulations of domination listed above was proved in \cite[Theorem~B]{BG}. 
Structures similar to multicones have been studied previously in the context of the action of a single matrix or operator (see e.g. \cite{KS,KrLiSo}) but in the context of several matrices or operators acting simultaneously their use appears to be quite recent, originating in \cite{ABY} in the context of two-dimensional matrices. Note that if $\cC$ is a $k$-multicone for a given set $\sA$ then it is also a $k$-multicone for every sufficiently nearby set $\sB$, and therefore for $1 \leq k < d$ each of the sets
\[\left\{\sA \in \cK\left(\GL_d\left(\R\right)\right)\colon \sA\text{ is }k\text{-dominated}\right\}\]
is open. 

The simplest example of domination is perhaps the following: if $A$ is a matrix in $\GL_d(\R)$
with eigenvalues $\mu_1$, \dots, $\mu_d$ repeated according to multiplicity and
ordered as $|\mu_1|\ge \cdots \ge |\mu_d|$, then for any $k$ such that $|\mu_k| > |\mu_{k+1}|$, the singleton set $\sA := \{A\}$ is $k$-dominated. By the remark above, every set $\sB$ which is sufficiently close to $\sA$ is also $k$-dominated. In this case the $k$-multicone $\cC$ can be taken as a single cone around an eigenspace of $A$. In general multicones may have arbitrarily many connected components,
as the following example indicates:
\begin{example}
Let $1 \le k < d$.
Suppose $E_1$, \dots, $E_m \subset \R^d$ are $k$-dimensional subspaces
and $F_1$, \dots, $F_n \subset \R^d$ are $(d-k)$-dimensional subspaces such that
$E_i \cap F_j = \{0\}$ for every $(i,j) \in I:=\{1,\dots,m\}\times\{1,\dots,n\}$.
If $\lambda > 1$ is sufficiently large
and for each $(i,j)\in I$ we choose a matrix $A_{i,j} \in \GL_d(\R)$
that fixes the spaces $E_i$ and $F_j$ and satisfies 
$$
\big\| A_{i,j}^{-1}| E_i \big\|^{-1} > \lambda \big\| A_{i,j}| F_j \big\| \, ,
$$
then the set $\sA$ composed of those matrices is $k$-dominated.
Indeed, we can take a $k$-multicone $\cC$ such that
$\cC \cup \{0\}$ is the union of all $k$-dimensional subspaces 
sufficiently close to some $E_i$.
Also notice that any $k$-multicone for $\sA$ contains $E_1 \cup \cdots \cup E_n \setminus \{0\}$
and does not intersect $F_1 \cup \cdots \cup F_n$.
\end{example}

In the example above, the spaces $E_i$'s and $F_j$'s can be intertwined in complex ways, preventing the existence of a topologically simple $k$-multicone: see \cite[\S~4]{BG}
for some peculiarities which may arise in higher dimensions. Multicones can be surprisingly intricate even if $\sA$ is a pair of matrices in $\SL_2(\R)$: some relevant examples appear in Figures~1 and 2 in the paper \cite{ABY},
and a complete description of the possible multicones of pairs of $\SL_2(\R)$-matrices is obtained in \S{3.8} of that paper. This description shows in particular that for every integer $n \geq 1$, there exists a $1$-dominated pair of $\SL_2(\R)$-matrices for which every $1$-multicone has at least $n$ connected components.

\medskip

Before proceeding further let us recall some facts and notation from multilinear algebra, which we summarise here for the convenience of the reader. We recall (see e.g.\ \cite[p.557]{MB}) that the  Euclidean inner product $\langle \cdot,\cdot\rangle$ on $\R^d$ induces a natural inner product on the $d \choose k$-dimensional vector space $\wed^k\R^d$, which on pairs of decomposable vectors is given by:
\begin{equation}\label{eq:wedge-inner-product-definition}\langle u_1 \wedge \cdots \wedge u_d,v_1 \wedge \cdots \wedge v_d\rangle := \det \left(\left[\langle u_i,v_j\rangle\right]_{i,j=1}^d\right).\end{equation}
If $A$ is a matrix in $M_d(\R)$, which we identify with a linear map $A \colon \R^d \to \R^d$,
we let $\wed^k A \colon \wed^k\R^d \to \wed^k\R^d$ denote its $k^\mathrm{th}$ exterior power. 
Let us list some properties of this linear map,
referring the reader to \cite[p.119--120]{Arnold} for details.
If $\mu_1(A)$, \dots, $\mu_d(A)$ are the eigenvalues of $A$,
repeated according to multiplicity, then 
the eigenvalues of $\wed^k A$ are the numbers
$$
\mu_{i_1}(A) \mu_{i_2}(A) \cdots \mu_{i_k}(A), \quad 
\text{where } i_1 < i_2 < \cdots < i_k \, ,
$$
and the repetitions in this list match the multiplicities.
In particular, if we order the eigenvalues as  $|\mu_1(A)| \ge \cdots \ge |\mu_d(A)|$
then 
\begin{equation}\label{eq:power-rho}
\rho(\wed^k A) = |\mu_1(A)| \cdots |\mu_k(A)| \, .
\end{equation}
Similarly, the list of singular values of $\wed^k A$ (with respect to the inner product \eqref{eq:wedge-inner-product-definition}) with repetitions according to multiplicity is exactly:
$$
\sigma_{i_1}(A) \sigma_{i_2}(A) \cdots \sigma_{i_k}(A), \quad 
\text{where } i_1 < i_2 < \cdots < i_k \, .
$$
In particular
\begin{align}
\|\wed^k A\|   = 
\sigma_1 (\wed^k A) &= \sigma_1(A) \cdots \sigma_{k-1}(A) \sigma_k(A) , \label{eq:power-norm} 
\\
\sigma_2 (\wed^k A) &= \sigma_1(A) \cdots \sigma_{k-1}(A) \sigma_{k+1}(A) .\label{eq:power-sigma2}
\end{align}
Let us also notice that the classical fact that
\begin{equation}\label{eq:super-Gelfand}
\lim_{n \to \infty} \sigma_k(A^n)^{1/n}=|\mu_k(A)|
\end{equation}
can be readily deduced by applying Gelfand's formula to $\wed^k A$.
We shall use these identities frequently without comment.

If $\sA$ is a subset of $M_d(\R)$, we denote by $\wed^k \sA$ the set of all $\wed^k A$ with $A \in \sA$. Returning to the subject of domination, we observe that when $1 \leq k \leq d$ the set $\sA \in \cK(\GL_d(\R))$ is $k$-dominated if and only if the set $\wed^k \sA$ is $1$-dominated: 
this follows from Definition~\ref{de:dominated}(\ref{i:dom-singular})
together with formulas \eqref{eq:power-norm} and \eqref{eq:power-sigma2}.

\subsection{The main results}

Our first result substantially extends the principles behind Example~\ref{ex:simple}. We are able to show that for sets of $2$-dimensional matrices with strictly positive determinants, we may obtain discontinuities of the lower spectral radius by composing elements of the set with rotations in considerable generality. Here and throughout this article we let $R_\theta \in SO(2)$ denote anticlockwise rotation of the plane through angle $\theta$.
\begin{theorem}\label{th:GL2+}
Suppose that $\sA \in \cK(\GL_2^+(\R))$ satisfies
$$
\subrad(\sA) > \subrad(\wed^2 \sA)^{1/2}
$$
and $\sA$ is not $1$-dominated. Then the function $\theta \in \R \mapsto \subrad(R_\theta \sA) \in \R_+$ is discontinuous at $\theta = 0$.
\end{theorem}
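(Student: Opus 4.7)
The plan is to extend the mechanism of Example~\ref{ex:simple}. I aim to produce, for arbitrarily small $|\theta|>0$, long words in the alphabet $R_\theta \sA$ whose normalised spectral radii approach $\sqrt{\subrad(\wed^2 \sA)}$. Since by hypothesis $\subrad(\sA)>\sqrt{\subrad(\wed^2\sA)}$, this shows $\liminf_{\theta\to 0}\subrad(R_\theta\sA)\le\sqrt{\subrad(\wed^2\sA)}<\subrad(\sA)$, proving discontinuity at $\theta=0$.

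\textbf{Setup.} For $2 \times 2$ matrices, $\wed^2 A$ is the scalar $\det A$, so $\subrad(\wed^2\sA)=\min\{\det A:A\in\sA\}$; let $A_0$ realise this minimum. Since $\rho(A_0)\ge\subrad(\sA)>\sqrt{\det A_0}$, the eigenvalues of $A_0$ must be real with distinct moduli, and since $\det A_0>0$ they have a common sign. After a linear change of coordinates I may assume $A_0=\mathrm{diag}(\lambda,\mu)$ with $|\lambda|>|\mu|>0$, and I write $E^u:=\R e_1$, $E^s:=\R e_2$ for the two eigenlines.

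\textbf{Routing step (key).} I aim to show that for every $\epsilon>0$ there exist $\theta$ with $|\theta|<\epsilon$ and a word $M = R_\theta A_n \cdots R_\theta A_1$ in $R_\theta\sA$ such that $M$ sends the top eigenline of $R_\theta A_0$ onto its bottom eigenline (these are close to $E^u$, $E^s$ for small $\theta$). Since $\sA$ is not $1$-dominated it admits no $1$-multicone, and one expects to extract from this absence arbitrarily long products $P = A_n\cdots A_1$ in $\sA$ whose singular value ratio $\sigma_2(P)/\sigma_1(P)$ fails to decay exponentially. Such $P$ is close to being conformal, so the image direction of the perturbed word $R_\theta A_n \cdots R_\theta A_1$ should be highly sensitive to $\theta$: as $\theta$ ranges over $(-\epsilon,\epsilon)$, the image of $e_1$ should sweep through enough of projective space to cross the bottom eigenline of $R_\theta A_0$, and the intermediate value theorem then delivers the required $\theta$. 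This is the main technical obstacle, as it requires converting the qualitative failure of $1$-domination into a quantitative sensitivity estimate for the image direction under rotational perturbations of the letters.

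\textbf{Spectral computation.} With $\theta$ and $M$ in hand, consider $Q_m := (R_\theta A_0)^m M$, a word of length $m+n$ in $R_\theta \sA$. Let $\tilde\lambda,\tilde\mu$ be the eigenvalues of $R_\theta A_0$, so $\tilde\lambda\tilde\mu = \det A_0$ and $|\tilde\lambda|>|\tilde\mu|$ for small $\theta$, and work in the corresponding eigenbasis. The routing condition forces the $(1,1)$ entry of $M$ to vanish in this basis, so
\[
Q_m = \begin{pmatrix} 0 & \tilde\lambda^m q \\ \tilde\mu^m r & \tilde\mu^m s \end{pmatrix}
\]
for certain $q,r,s$ with $qr\ne 0$ (since $M$ is invertible). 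The trace of $Q_m$ equals $\tilde\mu^m s$ and the determinant equals $-(\det A_0)^m qr$; as $m\to\infty$, $|\tilde\mu|^{2m}$ is negligible compared with $(\det A_0)^m$, and the eigenvalues of $Q_m$ are asymptotic to $\pm\sqrt{|qr|}\,(\det A_0)^{m/2}$, giving $\rho(Q_m)^{1/(m+n)}\to\sqrt{\det A_0}$. Hence $\subrad(R_\theta\sA)\le\sqrt{\det A_0}=\sqrt{\subrad(\wed^2\sA)}<\subrad(\sA)$, completing the argument.
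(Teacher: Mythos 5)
Your overall architecture is the same as the paper's (Lemma~\ref{le:theorem}): take the determinant-minimising matrix, use the failure of $1$-domination to route its expanding eigendirection onto its contracting eigendirection by a word in $R_\theta\sA$ with $|\theta|$ arbitrarily small, then append large powers of $R_\theta A_0$; your closing spectral computation is correct and essentially identical to the paper's final Claim. However, the proof is not complete: the routing step, which you state only as an expectation and yourself call the main technical obstacle, is where the whole theorem lives, and two distinct ingredients are missing there. First, the quantitative accessibility statement: the paper's Lemma~\ref{le:access_2} (obtained from \cite[Lemma~C.2]{ABD} via Corollary~\ref{co:ABD_bi} and Lemma~\ref{le:quick_fix}) shows that if $A_1,\dots,A_n\in\GL_2^+(\R)$ satisfy $\sigma_1(A_n\cdots A_1)/\sigma_2(A_n\cdots A_1)<c^2\lambda^{2n}$, with $c,\lambda$ depending only on $\delta$ and on $\|A_1\|\,\|A_1^{-1}\|$, then a \emph{single} angle $\theta_1\in[-\delta,\delta]$ inserted after every letter achieves the prescribed routing; the failure of $1$-domination supplies products meeting exactly this exponential-in-$n$ threshold. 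Your heuristic (``nearly conformal, hence the image direction is sensitive to $\theta$'') is the right intuition but is not an argument, and the lemma it stands in for is genuinely nontrivial (Hilbert projective metric, monotonicity of the projective action of positive-determinant matrices). Second, the self-consistency problem: accessibility routes the eigendirections of $A_0$ itself, whereas you need one $\theta_0$ for which $R_{\theta_0}A_n\cdots R_{\theta_0}A_1$ maps the expanding eigendirection of $R_{\theta_0}A_0$ onto the contracting eigendirection of $R_{\theta_0}A_0$. Your appeal to the intermediate value theorem omits the structure that makes it applicable: the paper proves that the expanding eigendirection of $R_\theta B$ turns monotonically one way, the contracting one the other way, and the routed direction monotonically in $\theta$, and only then runs a lift/winding bookkeeping argument to locate $\theta_0$ between $0$ and $\theta_1$ (the first two Claims in the proof of Lemma~\ref{le:theorem}).

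This is a genuine gap rather than a routine verification because both missing ingredients use the positivity of the determinants in an essential way (Remark~\ref{re:monotonicity}), which your routing sketch never invokes. Indeed Example~\ref{ex:no-discontinuity} exhibits a pair in $\GL_2(\R)$ with one negative determinant which is not $1$-dominated and satisfies $\subrad(\sA)>\subrad(\wed^2\sA)^{1/2}$, yet $\theta\mapsto\subrad(R_\theta\sA)$ is continuous at $\theta=0$; so any soft sensitivity argument at the level of generality of your sketch would prove a false statement, and orientation must enter the analysis exactly where the paper inserts monotonicity. A smaller point: ``after a linear change of coordinates I may assume $A_0$ is diagonal'' needs care, since conjugation does not preserve the family of rotations $R_\theta$; the paper avoids this normalisation and works directly with the eigendirections of $R_\theta B$ as functions of $\theta$.
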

Note that since $\wed^2\R^2$ is one-dimensional, the right-hand side in the inequality above is simply
\begin{equation}\label{eq:min-det}
\subrad(\wed^2 \sA)^{1/2} = \inf_{A \in \sA} \left( \det A \right)^{1/2}.
\end{equation}

In the converse direction to Theorem~\ref{th:GL2+} it is natural to ask how this mechanism for creating discontinuities might be avoided. A particularly simple possibility is that in which $\sA$ consists of a single matrix with unequal real eigenvalues: since no product of elements of $\sA$ can have almost-equal eigenvalues, perturbations of the type underlying Example~\ref{ex:simple} cannot be performed. Similarly, if $\sA$ consists of many different elements, but all of those elements have distinct eigenvalues, their expanding directions are all closely aligned to one another, and their contracting directions are also all closely aligned to one another, then it is difficult to see how a discontinuity might be constructed along the lines of Example~\ref{ex:simple}. It transpires that $1$-domination is the appropriate condition to prevent perturbations which discontinuously reduce the lower spectral radius:
\begin{theorem}\label{th:lipschitz}
Let $\mathcal{D}\subset \cK(\GL_d(\R))$ denote the open set of all nonempty compact subsets of $\GL_d(\R)$ which are $1$-dominated. Then $\subrad \colon \mathcal{D} \to \R$ is locally Lipschitz continuous.
\end{theorem}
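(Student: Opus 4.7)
My plan is to prove that $\log\subrad$ is locally Lipschitz on $\cD$, which suffices because $\subrad$ is uniformly bounded on $\cD$: the elementary inequality $\rho(B_n\cdots B_1)\ge |\det(B_n\cdots B_1)|^{1/d}$ gives $\subrad(\sB)\ge(\inf_{B\in\sB}|\det B|)^{1/d}>0$. Fix $\sA_0\in\cD$ with a $1$-multicone $\cC$. Since the conditions in Definition~\ref{de:dominated}(\ref{i:dom-multicone}) are open, there is a neighbourhood $\cU$ of $\sA_0$ in $\cK(\GL_d(\R))$ for which $\cC$ remains simultaneously a multicone. Shrinking $\cU$ if necessary, I fix uniform constants $C\ge 1$, $\tau\in(0,1)$, $M\ge 1$ so that condition~(a) holds for every $\sB\in\cU$ with those constants, and $\|B\|,\|B^{-1}\|\le M$ for every $B$ appearing in some $\sB\in\cU$.

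The first reduction is to replace the spectral radius of a long product by its growth rate along a fixed reference direction. Pick once and for all $v_0\in\cC$. Condition~(a) forces $\rho(P)/\sigma_1(P)=1+O(\tau^n)$ for every product $P=B_n\cdots B_1$ of length $n$ drawn from any $\sB\in\cU$, since all remaining eigenvalues of $P$ are controlled by $\sigma_2(P)\le C\tau^n\sigma_1(P)$. Moreover, because condition~(b)(v) keeps the codimension-one subspace transverse to the top right singular direction of $P$ uniformly away from $\cC$, the quantity $\|Pv_0\|/(\|v_0\|\sigma_1(P))$ is pinched above and below by constants depending only on $\cU$. Consequently
\[
\bigl(\|Pv_0\|/\|v_0\|\bigr)^{1/n}=\rho(P)^{1/n}(1+o(1))
\]
uniformly in $n$ and in the choice of product.

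The heart of the argument is the following uniform Lipschitz estimate: for $\sB,\sB'\in\cU$ with $d_H(\sB,\sB')\le\delta$ and any matched sequences $B_i\in\sB$, $B_i'\in\sB'$ with $\|B_i-B_i'\|\le\delta$,
\[
\bigl|\log\|B_n\cdots B_1 v_0\|-\log\|B_n'\cdots B_1' v_0\|\bigr|\le L\,n\delta,
\]
with a constant $L$ depending only on $\cU$. I would prove this by telescoping: set $w_0=v_0$, $w_i=B_i w_{i-1}$ (and likewise with primes), and write the log-norm as $\sum_{i=1}^n\log\lambda(B_i,w_{i-1})$ where $\lambda(B,w):=\|Bw\|/\|w\|$. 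On the relevant compact set of allowed $(B,[w])$ the function $\log\lambda$ is Lipschitz in both arguments, so each summand perturbs by at most $L_1\delta+L_2\,d_\ast([w_{i-1}],[w_{i-1}'])$ in some projective metric $d_\ast$. The decisive ingredient is that, possibly after passing to a power $\sA_0^N$ with $C\tau^N<1$, the projective action of $\sA_0$ on the image of $\cC$ is a uniform contraction of $d_\ast$ with ratio $\eta<1$: this is a Birkhoff--Hopf-type consequence of the multicone property together with the singular-value gap, and is robust under small perturbation in $\cU$. The recursion $d_\ast([w_i],[w_i'])\le\eta\,d_\ast([w_{i-1}],[w_{i-1}'])+L_3\delta$ is then uniformly bounded by $L_3\delta/(1-\eta)$, and summing the per-step bound yields the displayed estimate.

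Dividing this by $n$ and combining with the earlier reduction gives
\[
\bigl|\tfrac{1}{n}\log\rho(B_n\cdots B_1)-\tfrac{1}{n}\log\rho(B_n'\cdots B_1')\bigr|\le L\delta+o(1)
\]
uniformly over matched pairs. Taking infima over products of length $n$ (using $d_H(\sB,\sB')\le\delta$ to match any product in $\sB$ with one in $\sB'$) and letting $n\to\infty$ gives $|\log\subrad(\sB)-\log\subrad(\sB')|\le L\delta$, which together with the two-sided boundedness of $\subrad$ on $\cU$ yields the Lipschitz bound on $\subrad$. I expect the main obstacle to be the uniform projective contraction underlying the telescoping step: one must produce, directly from the multicone data and with constants robust under perturbation, a single metric $d_\ast$ on the projectivization of $\cC$ that every element of $\sA_0^N$ contracts by a definite factor. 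Without this uniform contraction the per-step Lipschitz errors would in principle accumulate as $(M/c)^n$ and destroy the uniformity in $n$ on which the entire argument rests.
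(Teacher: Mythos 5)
Your argument is correct in outline but takes a genuinely different route from the paper. The paper reduces to subsets of a single compact fattening $\sK=\sA_\epsilon$ (so that domination constants are automatically uniform), and then for each $\sB\subseteq\sK$ constructs, by a fixed-point argument (Arzel\`a--Ascoli plus Leray--Schauder), a \emph{lower Barabanov function} $\psi_\sB$ on the multicone satisfying $\psi_\sB(u)+\log\subrad(\sB)=\min_{B\in\sB}\psi_\sB(Bu)$ (Proposition~\ref{pr:barabanov}); the Lipschitz bound of Proposition~\ref{pr:lipschitz} then follows from a single one-step perturbation estimate iterated against this exact functional equation. You instead couple orbits directly: match products factor by factor, control the projective discrepancy by a contraction-plus-error recursion, and telescope the one-step expansion rates. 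Both routes rest on the same ingredients: the uniform pinching $\|Pv\|\geq\kappa\|P\|\,\|v\|$ for $v\in\cC$ (this is Lemmas~\ref{le:cone-basic}--\ref{le:cone-vector}, a genuine lemma whose proof is not the transversality one-liner you suggest), a uniform projective contraction on $\overline{\cC}$ (Lemma~\ref{le:metric}; the paper builds an adapted metric contracted by every single element, you pass to a power $N$), and Lipschitz dependence of $\log(\|Bw\|/\|w\|)$ on $(B,\overline{w})$ (Lemma~\ref{le:varphi}). The Barabanov function buys a bookkeeping-free one-sided inequality and an object of independent interest; your coupling buys elementarity, avoiding the fixed-point machinery. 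Two caveats: your claim $\rho(P)/\sigma_1(P)=1+O(\tau^n)$ is false (take $P=A^n$ with $A=\left(\begin{smallmatrix}2&K\\0&1\end{smallmatrix}\right)$, $K$ large: the ratio stays near $1/K$), though all you need is comparability of $\rho(P)$, $\sigma_1(P)$ and $\|Pv_0\|/\|v_0\|$ up to uniform constants, or simply the norm characterisation of $\subrad$; and the ``decisive ingredient'' cannot come from Birkhoff--Hopf, since multicones need not be convex --- the correct derivation is via the singular-value gap, $d(\overline{Pu},\overline{Pv})\leq\kappa^{-2}\bigl(\sigma_2(P)/\sigma_1(P)\bigr)d(\overline{u},\overline{v})\leq C\kappa^{-2}\tau^{N}d(\overline{u},\overline{v})$, so you must take $C\tau^{N}<\kappa^{2}$ (not merely $C\tau^{N}<1$), with uniformity over $\cU$ obtained, as in the paper, by working inside the fattened set containing all nearby $\sB$.
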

The proof of this theorem is in the same spirit as the corresponding Lipschitz continuity results for the upper spectral radius. These theorems (in \cite{K10,MW,W}) prove that $\jrad$ is Lipschitz continuous near $\sA$ by demonstrating the existence of an operator norm $|\mathord{\cdot}|_{\sA}$ on $M_d(\R)$ such that $|A|_{\sA}\leq \jrad(\sA)$ for all $A \in \sA$. It follows easily that if $d_H(\sA,\sB)<\epsilon$ then $|B|_\sA\leq |A|_\sA+K\epsilon \leq \jrad(\sA)+K\epsilon$ for all $B \in \sB$ and therefore $\jrad(\sB)\leq \jrad(\sA)+K\epsilon$, where $K$ is a constant related to the eccentricity of the norm $|\mathord{\cdot}|_{\sA}$. If the norm $|\mathord{\cdot}|_{\sA}$ may be chosen in a systematic way so that its eccentricity depends continuously on $\sA$ then by proving the same inequality with $\sA$ and $\sB$ interchanged it follows that $\jrad$ is Lipschitz continuous at $\sA$. In \cite{K10,W} this is achieved by taking $|\mathord{\cdot}|_{\sA}$ to be a \emph{Barabanov norm}, that is, the operator norm on $M_d(\R)$ induced by a norm $|\mathord{\cdot}|_{\sA}$ on $\R^d$ such that for every $v \in \R^d$
\[\jrad(\sA)|v|_{\sA}=\max_{A \in \sA} |Av|_{\sA}.\]
The existence of such norms was established by N.E.~Barabanov in \cite{Ba} and their properties were subsequently examined in detail by F.~Wirth \cite{W2}. In order to prove Theorem~\ref{th:lipschitz} we construct a function on the set $\cC$ considered in Definition~\ref{de:dominated}(\ref{i:dom-multicone}) which satisfies a functional equation similar to that of a Barabanov norm. An object satisfying a functional equation of this kind was used recently by N.~Guglielmi and V.Yu.~Protasov in \cite{GP} to estimate the lower spectral radii of sets of matrices which preserve an embedded pair of convex cones. Unlike that construction we make no use of convexity properties and do not require the preserved region $\cC$ to be connected: in fact our construction more closely resembles a method used by T. Bousch in ergodic optimisation \cite{Bou,Bou2}. For further details we direct the reader to Proposition~\ref{pr:barabanov} below. We remark that R.M.~Jungers has previously shown that the lower spectral radius is continuous at sets of matrices which preserve a nested pair of convex cones \cite{J12}.

\medskip

As well as giving us a sufficient condition for continuity of the lower spectral radius, Theorem~\ref{th:lipschitz} yields a nontrivial lower bound on the values taken by $\subrad(\sB)$ when $\sB$ is close to $\sA$. To see this we argue as follows. If $A \in \GL_d(\R)$ and $1 \leq k_1\leq k_2 \leq d$ then the formula \eqref{eq:power-rho} shows that $\rho(\wed^{k_1}A)^{1/k_1}\geq \rho(\wed^{k_2}A)^{1/k_2}$. Combining this inequality with the characterisation \eqref{eq:subrad-inf} of the lower spectral radius we obtain that for any $\sA \in \cK(\GL_d(\R))$,
\begin{equation}\label{eq:wedges}
	\subrad(\sA) \geq \subrad\left(\wed^2\sA\right)^{1/2}\geq \cdots \geq \subrad\left(\wed^{d-1}\sA\right)^{1/(d-1)}\geq \subrad\left(\wed^d\sA\right)^{1/d}.
\end{equation}
Let us  suppose now that $\sA \in \cK(\GL_d(\R))$ is $k$-dominated, in which case $\wed^k\sA$ is $1$-dominated. In view of Theorem~\ref{th:lipschitz} it must be the case that the map $\sB \mapsto \subrad(\wed^k \sB)$ is continuous at $\sA$, and in view of \eqref{eq:wedges} we deduce
\[\liminf_{\sB \to \sA} \subrad(\sB) \geq \lim_{\sB \to \sA}\subrad\left(\wed^k\sB\right)^{1/k}=\subrad\left(\wed^k\sA\right)^{1/k}.\]
Since this is true for every $k$ such that $\sA$ is $k$-dominated, it is in particular true for the least such integer $k$. If for every $\sA \in \cK(\GL_d(\R))$ we define
\[
\ell(\sA) := \min \big\{ k \in \{1,\dots,d\} \colon 
\text{$\sA$ is $k$-dominated} \big\},
\]
the we have proved
\begin{equation}\label{eq:iff-weaker}
\liminf_{\sB \to \sA} \subrad(\sB) \geq 
\subrad\left(\wed^{\ell(\sA)} \sA\right)^{1/\ell(\sA)} \, .
\end{equation}
The next key result of this article is the improvement of this inequality to an equality. We have:
\begin{theorem}\label{th:formula-GLd}
For every $\sA \in \cK(\GL_d(\R))$, 
\begin{equation}\label{eq:formula-GLd}
\liminf_{\sB \to \sA} \subrad(\sB) =
\subrad\left(\wed^{\ell(\sA)} \sA\right)^{1/\ell(\sA)} \, ,
\end{equation}
where $\ell(\sA)$ is the smallest index of domination for $\sA$.
\end{theorem}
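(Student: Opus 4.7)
The $\ge$ direction of \eqref{eq:formula-GLd} is already recorded in \eqref{eq:iff-weaker}, so the task is to establish the reverse inequality $\liminf_{\sB \to \sA}\subrad(\sB) \le \subrad(\wed^{\ell(\sA)}\sA)^{1/\ell(\sA)}$. Write $k := \ell(\sA)$. When $k = 1$ the set $\sA$ is $1$-dominated, Theorem~\ref{th:lipschitz} yields continuity of $\subrad$ at $\sA$, and both sides collapse to $\subrad(\sA)$. So assume $k \ge 2$; by the minimality in the definition of $\ell(\sA)$, the set $\sA$ is not $(k-1)$-dominated, or equivalently (Definition~\ref{de:dominated}(\ref{i:dom-singular}) together with \eqref{eq:power-norm}--\eqref{eq:power-sigma2}), $\wed^{k-1}\sA$ is not $1$-dominated.

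Given $\delta > 0$, apply \eqref{eq:subrad-inf} to $\wed^k\sA$ to pick a finite product $P_0 = A_n \cdots A_1 \in \sA^n$ with $\rho(\wed^k P_0)^{1/n} < \subrad(\wed^k\sA) + \delta$. By \eqref{eq:power-rho} the geometric mean of the top $k$ eigenvalue moduli of $P_0$ is then at most $(\subrad(\wed^k\sA)+\delta)^{1/k}$, while the top eigenvalue itself may be substantially larger. The plan is to build, by replacing finitely many factors by arbitrarily close elements of a perturbed set $\sB$, a long word $Q$ in $\sB$ whose top $k$ eigenvalues have nearly equal modulus, so that $\rho(Q)^{1/\mathrm{length}(Q)}$ is within $O(\delta)$ of $\subrad(\wed^k\sA)^{1/k}$. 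Feeding such a word into \eqref{eq:subrad-inf} for $\sB$ and letting $\delta \to 0$ will yield the desired inequality.

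The eigenvalue-equalisation is a higher-dimensional analogue of the rotation mechanism behind Example~\ref{ex:simple} and Theorem~\ref{th:GL2+}, carried out inside $\wed^{k-1}\R^d$. Failure of $1$-domination for $\wed^{k-1}\sA$ asserts, via Definition~\ref{de:dominated}(\ref{i:dom-singular}), that for every $C>1$ and $\tau\in(0,1)$ one can find products $\wed^{k-1}(A'_m\cdots A'_1)$ with $\sigma_2/\sigma_1$ exceeding $C\tau^m$; after an arbitrarily small modification of the $A'_i$ one may then route the top singular direction of $\wed^{k-1}P_0$ into an almost-contracting one. Splicing such a mixing word with many repetitions of $P_0$ produces $Q$ whose spectral radius is controlled by $\rho(\wed^k P_0)^{1/k}$ rather than by $|\mu_1(P_0)|$. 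Simultaneously, $k$-domination of $\sA$ makes the multicone of Definition~\ref{de:dominated}(\ref{i:dom-multicone}) robust against small perturbations, so the whole construction lives inside an invariant $k$-plane subbundle and the $\wed^k$-bound supplied by $P_0$ survives.

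The principal obstacle is the quantitative splicing: one must arrange that a single Hausdorff-small perturbation of $\sA$ simultaneously effects the $\wed^{k-1}$ eigenvalue swap, preserves the $\wed^k$-bound through many repetitions of $P_0$, and leaves the cost of the mixing factor negligible as the length tends to infinity. A natural tool for the last two items is the Barabanov-like function built in Proposition~\ref{pr:barabanov}, which measures growth intrinsically on the invariant $k$-plane bundle and should reduce the estimate to a triangle-type bookkeeping analogous to the $d=2$ case of Theorem~\ref{th:GL2+}, now transplanted into $\wed^{k-1}\R^d$.
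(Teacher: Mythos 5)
Your high-level strategy is the right one (and matches the paper's outline in spirit): exploit failure of domination below level $k=\ell(\sA)$ to splice a short ``mixing'' word into a long product that nearly minimises $\wed^k$, while the $1$-domination of $\wed^k\sA$ keeps the $\wed^k$-bound stable under the perturbation. But as written the plan has a genuine gap that would make it fail for $k\ge 3$. You propose a \emph{single} equalisation step, carried out at the fixed level $k-1$ (failure of $(k-1)$-domination), and claim this already makes the top $k$ eigenvalue moduli of the spliced word nearly equal. Swapping directions at level $k-1$ only brings $|\mu_{k-1}|$ and $|\mu_k|$ into line; it does nothing in the situation (explicitly flagged in the paper's outline for $\ell=4$) where, say, $\sigma_1\approx\sigma_2$ are far above the geometric mean of the first four singular values while $\sigma_3\approx\sigma_4$ are far below it. In general one must choose the pivot $p\in\{1,\dots,k-1\}$ where the gap $\sigma_p/\sigma_{p+1}$ is large relative to the defect (Lemma~\ref{le:pivot}), use the failure of $p$-domination for \emph{that} $p$ (available since $\ell$ is minimal), and accept that one splicing only shrinks the defect $\zeta_\ell$ by a definite factor $1-\alpha_d$ rather than killing it. This forces an iteration, and the paper's actual proof is organised around making that iteration converge: it thickens $\sA$ to $\sA_\epsilon$ (so the nearby sets have strictly larger cardinality, contrary to your ``replace finitely many factors'' picture), introduces the quantity $Z_\delta$ of \eqref{eq:Z}, proves the contraction estimate of Lemma~\ref{le:synthesis}, deduces $Z_\delta(\sA_\epsilon)=0$ by a supremum argument, converts this into $\subrad(\sA_\epsilon)=\subrad(\wed^\ell\sA_\epsilon)^{1/\ell}$ via Lemma~\ref{le:z-delta}, and finally lets $\epsilon\to 0$ using Theorem~\ref{th:lipschitz} applied to the $1$-dominated sets $\wed^\ell\sA_\epsilon$.

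Two further steps are asserted rather than argued. First, ``after an arbitrarily small modification of the $A'_i$ one may route the top singular direction of $\wed^{k-1}P_0$ into an almost-contracting one'' is precisely the higher-dimensional accessibility statement (Lemma~\ref{le:access_higher}), and it is not a formal consequence of Definition~\ref{de:dominated}(\ref{i:dom-singular}): small perturbations in $\GL_d(\R)$ induce only a very restricted family of perturbations of the exterior power, the admissible perturbation size must be tied quantitatively to the ratio $\sigma_p/\sigma_{p+1}$ along the words witnessing non-domination, and the length of the connecting word has to be fixed \emph{before} the length of $P_0$ is taken large, so that its cost vanishes in the exponential average. Second, you need a quantitative consequence of the geometric swap, namely that $R(E)\cap F\neq\{0\}$ forces $\tau_p(PRP)\le 2\tau_p(P)-\lambda_p(P)+\lambda_{p+1}(P)+C_d(1+\log\|R\|)$ (Lemma~\ref{le:e-f-bound}); without such an estimate ``controlled by $\rho(\wed^k P_0)^{1/k}$'' is not justified. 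Finally, the Barabanov function of Proposition~\ref{pr:barabanov} is not the right bookkeeping device here: in the paper it enters only through Theorem~\ref{th:lipschitz}, while the splicing estimates are handled directly with the singular-value quantities $\tau_k$ and $\zeta_\ell$.
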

As was previously remarked, the identity \eqref{eq:subrad-inf} implies that $\subrad \colon \cK(\GL_d(\R))\to \R_+$ is upper semi-continuous, so 
$$
\limsup_{\sB \to \sA} \subrad(\sB) = \subrad(\sA)
$$
for all $\sA \in \cK(\GL_d(\R))$
and therefore Theorem~\ref{th:formula-GLd} has the following consequence:
\begin{corollary}\label{co:iff-GLd}
The set $\sA \in \cK(\GL_d(\R))$ is a point of continuity of the function $\subrad \colon \cK(\GL_d(\R)) \to \R_+$ if and only if
\begin{equation}\label{eq:iff-continuity}
\subrad(\sA) = \subrad\left(\wed^{\ell(\sA)} \sA\right)^{1/\ell(\sA)} \, .
\end{equation}
\end{corollary}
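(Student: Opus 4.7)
The plan is to obtain the corollary as an essentially immediate consequence of Theorem~\ref{th:formula-GLd}, combined with the upper semi-continuity of $\subrad$ that is already recorded in the discussion surrounding \eqref{eq:subrad-inf}.

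First I would make the upper semi-continuity explicit in the form needed. The representation \eqref{eq:subrad-inf} writes $\subrad$ as a pointwise infimum of functions of the form $\sA \mapsto \inf\{\|A_n \cdots A_1\|^{1/n} : A_i \in \sA\}$, each of which is continuous in the Hausdorff metric; hence $\subrad \colon \cK(\GL_d(\R))\to\R_+$ is upper semi-continuous. Since the constant sequence $\sB_n \equiv \sA$ is admissible we automatically have $\limsup_{\sB\to\sA}\subrad(\sB)\geq \subrad(\sA)$, and upper semi-continuity provides the reverse inequality, so
\[
\limsup_{\sB \to \sA}\subrad(\sB)=\subrad(\sA).
\]

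Next I would invoke Theorem~\ref{th:formula-GLd} to identify
\[
\liminf_{\sB \to \sA}\subrad(\sB)=\subrad\!\left(\wed^{\ell(\sA)} \sA\right)^{1/\ell(\sA)}.
\]
By definition $\sA$ is a point of continuity of $\subrad$ precisely when the above $\liminf$ and $\limsup$ coincide. Substituting the two identities just obtained converts this condition into $\subrad(\sA)=\subrad(\wed^{\ell(\sA)}\sA)^{1/\ell(\sA)}$, which is exactly \eqref{eq:iff-continuity}.

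There is no genuine obstacle in this argument; all of the substantive content is hidden inside Theorem~\ref{th:formula-GLd}, whose proof is where the real work is done. The corollary is a purely formal combination of that theorem with the standard $\liminf$/$\limsup$ reformulation of continuity, so the only thing to be careful about is noting that \eqref{eq:subrad-inf} does supply upper semi-continuity on the domain $\cK(\GL_d(\R))$ of invertible matrices (and not merely on $\cK(M_d(\R))$), which is immediate because restriction to an open subset preserves upper semi-continuity.
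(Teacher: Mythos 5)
Your argument is correct and is precisely the paper's own derivation: the identity \eqref{eq:subrad-inf} gives upper semi-continuity of $\subrad$, hence $\limsup_{\sB\to\sA}\subrad(\sB)=\subrad(\sA)$, and combining this with the formula for $\liminf_{\sB\to\sA}\subrad(\sB)$ from Theorem~\ref{th:formula-GLd} yields the stated continuity criterion. Nothing to add.
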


Since the function $\sA \mapsto \ell(\sA)$ is upper semi-continuous, and the map $\sA \mapsto \subrad(\wed^k\sA)$ is continuous on $\{\sA \colon \ell(\sA) = k\}$ by Theorem~\ref{th:lipschitz}, it follows from \eqref{eq:wedges} that the map $\sA \mapsto \subrad(\sA)^{1/\ell(\sA)}$ is lower semi-continuous. Corollary~\ref{co:iff-GLd} thus has in common with the proof of the continuity of the upper spectral radius the fact that continuity is derived from an equality between an upper semi-continuous quantity and a lower semi-continuous one.

The arguments required to improve the inequality \eqref{eq:iff-weaker} to the equation \eqref{eq:iff-continuity} in Theorem~\ref{th:formula-GLd} are somewhat involved, and we defer describing them in detail until \S\ref{se:iff-GLd} below. However we remark that this part of the proof has in common with Example~\ref{ex:simple} and Theorem~\ref{th:GL2+} the core idea of finding a product $P$ of elements of $\sA$ and a product $R$ of matrices close to the set $\sA$ such that $R$ interposes two carefully-chosen subspaces of $\R^d$ associated to the spectrum of $P$ in such a manner that certain singular values of $PRP$ are brought into greater agreement with one another. In common with the proof of Corollary~\ref{co:iff-GLd}, our proof of Theorem~\ref{th:GL2+} in fact proceeds by deriving the formula
\begin{equation}\label{eq:formula-GL2+}
\liminf_{\theta \to 0} \subrad(R_\theta \sA) =
\subrad\left(\wed^{\ell(\sA)} \sA\right)^{1/\ell(\sA)} \, ,
\end{equation}
for arbitrary $\sA \in \cK(\GL_2^+(\R))$. However, our proof of Theorem~\ref{th:formula-GLd} differs from 
Theorem~\ref{th:GL2+} and Example~\ref{ex:simple} in that the sets $\sB$ which are chosen close to $\sA$ when calculating the limit inferior may have strictly larger cardinality than $\sA$. At this time we are not able to prove a version of formula~\eqref{eq:formula-GLd} in which the cardinality of the set $\sA$ is maintained. On the other hand we have no reason to believe that such a result should be impossible; for further discussion see \S\ref{se:future} below.

We judge the following implications of Corollary~\ref{co:iff-GLd} to be sufficiently interesting to be worthwhile stating explicitly:
\begin{corollary}
Let $\sA \in \cK(\GL_d(\R))$ be a singleton set. Then $\subrad$ is continuous at $\sA$.
\end{corollary}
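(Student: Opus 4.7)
The plan is to invoke Corollary~\ref{co:iff-GLd}: it suffices to verify the identity $\subrad(\sA) = \subrad(\wed^{\ell(\sA)}\sA)^{1/\ell(\sA)}$ when $\sA = \{A\}$ is a singleton. I would first compute both sides explicitly in terms of the eigenvalues of $A$.

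Let $\mu_1(A),\dots,\mu_d(A)$ denote the eigenvalues of $A$ listed with multiplicity and ordered so that $|\mu_1(A)| \geq \cdots \geq |\mu_d(A)| > 0$. For a singleton set, the infimum defining $\subrad(\sA)$ is attained by the powers of $A$ itself, so by Gelfand's formula
\[
\subrad(\{A\}) = \lim_{n \to \infty} \|A^n\|^{1/n} = \rho(A) = |\mu_1(A)|.
\]
The same identity applied to $\wed^k A$ together with \eqref{eq:power-rho} gives
\[
\subrad(\wed^k \{A\}) = \rho(\wed^k A) = |\mu_1(A)|\cdots|\mu_k(A)|
\]
for every $1 \leq k \leq d$.

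Next I would identify $\ell(\sA)$. As recalled just after Definition~\ref{de:dominated}, a singleton $\{A\}$ is $k$-dominated (for $k<d$) if and only if $|\mu_k(A)| > |\mu_{k+1}(A)|$; and every set is $d$-dominated by convention. Since $A \in \GL_d(\R)$ we have $|\mu_d(A)|>0 = |\mu_{d+1}(A)|$, so if we set $|\mu_{d+1}(A)| := 0$ then
\[
\ell(\{A\}) = \min\bigl\{k \in \{1,\dots,d\} : |\mu_k(A)| > |\mu_{k+1}(A)|\bigr\}.
\]
By minimality, the top $\ell := \ell(\{A\})$ eigenvalues all share the same modulus, i.e.\ $|\mu_1(A)| = |\mu_2(A)| = \cdots = |\mu_\ell(A)|$.

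Combining these two computations,
\[
\subrad(\wed^\ell \{A\})^{1/\ell} = \bigl(|\mu_1(A)|\cdots|\mu_\ell(A)|\bigr)^{1/\ell} = |\mu_1(A)| = \rho(A) = \subrad(\{A\}),
\]
and Corollary~\ref{co:iff-GLd} then gives continuity of $\subrad$ at $\{A\}$. There is no real obstacle here; the only slightly delicate point is remembering to treat the case where $|\mu_1(A)| = \cdots = |\mu_d(A)|$ via the convention that every set is $d$-dominated, which is handled cleanly by the convention $|\mu_{d+1}(A)|:=0$.
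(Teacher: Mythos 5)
Your proposal is correct and follows essentially the same route as the paper: both arguments identify $\ell(\{A\})$ as the number of eigenvalues (counted with algebraic multiplicity) of maximal modulus and then use \eqref{eq:power-rho} to obtain $\rho(A)=\rho(\wed^{\ell(\sA)}A)^{1/\ell(\sA)}$, which is the continuity criterion of Corollary~\ref{co:iff-GLd}. One small point of attribution: the passage after Definition~\ref{de:dominated} only states that $|\mu_k(A)|>|\mu_{k+1}(A)|$ implies $k$-domination of $\{A\}$, not the converse, but the converse you need follows at once from Definition~\ref{de:dominated}(\ref{i:dom-singular}) together with \eqref{eq:super-Gelfand}, exactly as the paper's proof invokes it.
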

\begin{proof}
Let $\sA=\{A\}$. 
It follows from identities \eqref{eq:power-norm}--\eqref{eq:super-Gelfand} and Definition~\ref{de:dominated}(\ref{i:dom-singular}) that $\ell(\sA)$ is the sum of the algebraic multiplicities of those eigenvalues of $A$ whose moduli equal $\rho(A)$. 
So we obtain from \eqref{eq:power-rho} that $\rho(A)=\rho(\wed^{\ell(\sA)}A)^{1/\ell(\sA)}$,
which is trivially equivalent to \eqref{eq:iff-continuity},
thus establishing the announced continuity.
\end{proof}

\begin{corollary}
The lower spectral radius $\subrad \colon \cK(\GL_2(\R)) \to \R_+$ is continuous at every $\sA \in \cK(\SL_2(\R))$. In particular the restriction $\subrad \colon \cK(\SL_2(\R)) \to \R_+$ is continuous.
\end{corollary}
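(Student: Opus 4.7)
The plan is to combine Theorem~\ref{th:lipschitz} with Corollary~\ref{co:iff-GLd}, splitting into two cases according to whether or not $\sA$ admits a $1$-domination. If $\sA$ is $1$-dominated then $\sA$ lies in the open set $\mathcal{D}$ of Theorem~\ref{th:lipschitz}, and local Lipschitz continuity (and hence continuity) of $\subrad$ at $\sA$ is immediate. It remains to handle the case in which $\sA \in \cK(\SL_2(\R))$ is not $1$-dominated, so that $\ell(\sA) = 2$.

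In this case Corollary~\ref{co:iff-GLd} reduces the task to verifying the single identity $\subrad(\sA) = \subrad(\wed^2 \sA)^{1/2}$. Since every element of $\sA$ has determinant $1$, formula~\eqref{eq:min-det} gives $\subrad(\wed^2 \sA)^{1/2} = 1$, while the chain of inequalities \eqref{eq:wedges} already yields $\subrad(\sA) \geq 1$. It therefore suffices to rule out the strict inequality $\subrad(\sA) > 1$ under the hypothesis that $\sA$ is not $1$-dominated.

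The substantive step, and the only place where any real work is required, exploits the fact that on $\SL_2(\R)$ the relation $\sigma_1(A)\sigma_2(A) = |\det A| = 1$ forces $\sigma_2(A)/\sigma_1(A) = \|A\|^{-2}$, and that this identity persists for every finite product of $\SL_2(\R)$-matrices. Supposing for contradiction that $\subrad(\sA) > 1$, the infimum characterisation \eqref{eq:subrad-inf} supplies the uniform lower bound $\|A_n \cdots A_1\| \geq \subrad(\sA)^n$ valid for every $n \geq 1$ and every $A_1,\dots,A_n \in \sA$; combining this with the above identity yields
\[
\frac{\sigma_2(A_n \cdots A_1)}{\sigma_1(A_n \cdots A_1)} \le \subrad(\sA)^{-2n}.
\]
With $\tau := \subrad(\sA)^{-2} \in (0,1)$ and any constant $C > 1$, this is exactly the singular-value criterion for $1$-domination from Definition~\ref{de:dominated}(\ref{i:dom-singular}), contradicting the standing hypothesis and completing the proof of the first assertion. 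The second assertion of the corollary, concerning the restriction of $\subrad$ to $\cK(\SL_2(\R))$, is then automatic.
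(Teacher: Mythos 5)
Your proposal is correct and takes essentially the same route as the paper: the substantive point in both is that $\sigma_2=\sigma_1^{-1}$ for products of $\SL_2(\R)$-matrices, so that Definition~\ref{de:dominated}(\ref{i:dom-singular}) makes $1$-domination equivalent to $\subrad(\sA)>1$, which together with $\subrad(\wed^2\sA)^{1/2}=1$ forces \eqref{eq:iff-continuity} to hold in every case. Your handling of the $1$-dominated case via Theorem~\ref{th:lipschitz} instead of noting that \eqref{eq:iff-continuity} is trivial when $\ell(\sA)=1$ is only a cosmetic variation on the paper's use of Corollary~\ref{co:iff-GLd}.
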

\begin{proof}
Let $\sA \in \cK(\SL_2(\R))$, and note that $\wed^2\sA=\{1\}$ so that trivially $\subrad(\wed^2\sA)^{1/2}=1$.
Since $\sigma_2(A)=\sigma_1(A)^{-1}$ for all $A \in \SL_2(\R)$, direct examination of Definition~\ref{de:dominated}(\ref{i:dom-singular}) shows that $\sA$ is $1$-dominated
if and only if $\subrad(\sA)>1$.
It follows from these facts that \eqref{eq:iff-continuity} always holds.
\end{proof}

So far in this introduction we have presented only one example -- Example~\ref{ex:simple} -- of a set of matrices at which the lower spectral radius is discontinuous. We will present later, in \S\ref{se:examples} below, a systematic method for constructing pairs of $\GL_2(\R)$-matrices which form discontinuities of $\subrad$. We also indicate a higher-dimensional generalisation of Example~\ref{ex:simple} in that section. We remark that all of these examples are somewhat `rigid' in the sense that they consist of sets of matrices which simultaneously preserve a finite union of subspaces of $\R^d$: however, we believe that this kind of rigidity is not a necessary condition for discontinuity, and that the set of discontinuities may even be quite large in certain senses (see \S\ref{se:future} below).


\subsection{Finiteness properties and computation}

In the influential article \cite{LW} J.C.~Lagarias and Y.~Wang conjectured that for every nonempty finite set $\sA \subset M_d(\R)$ there exists a finite sequence $A_1,\ldots,A_n$ of elements of $\sA$ such that $\rho(A_n\cdots A_1)^{1/n}=\jrad(\sA)$. A set $\sA$ for which this property holds is said to have the \emph{finiteness property} for the upper spectral radius; we will abbreviate this by saying that $\sA$ has the \emph{upper finiteness property}. Whilst Lagarias and Wang's conjecture was subsequently shown to be false \cite{BM}, it is believed that the upper finiteness property holds for \emph{typical} finite sets of matrices in various senses \cite{CGSZ,JB,Ma} and this question continues to stimulate research \cite{BTV,DHX13,HMST,MS}. In particular some of the most powerful methods for computing the upper spectral radius consist in verifying that the upper finiteness property is valid for a particular set of matrices and then identifying a finite sequence which attains the upper spectral radius in this manner (see for example \cite{GP,GZ,Ma}). By analogy let us say that $\sA$ satisfies the \emph{finiteness property for the lower spectral radius}, or \emph{lower finiteness property}, if there exists a finite sequence $A_1,\ldots,A_n \in \sA$ such that $\rho(A_n\cdots A_1)^{1/n}=\subrad(\sA)$. In view of Gelfand's formula we note that $\sA$ satisfies the lower finiteness property if and only if there exists a periodic infinite sequence $(A_i) \in \sA^\N$ such that $\lim_{n \to \infty} \|A_n\cdots A_1\|^{1/n}=\subrad(\sA)$. In this subsection we note an implication of our results for the lower finiteness property and discuss its consequences for computation of the lower spectral radius.

The following set of matrices was noted by T.~Bousch and J.~Mairesse \cite{BM} as an example where the lower finiteness property fails to hold:
\begin{example}\label{ex:nasty1}
Consider the set $\sA \subset \GL_2(\R)$ defined by
\[\sA:=\left\{\left(\begin{array}{cc}\frac{1}{3}&0\\0&3\end{array}\right),\left(\begin{array}{cc}2&0\\0&\frac{1}{2}\end{array}\right)\right\}.\]
Any product of elements of $\sA$ has spectral radius $\max\{3^a2^{-b},3^{-a}2^b\}$ where $a$ is the number of occurences of the first matrix and $b$ the number of occurrences of the second, and since $a/b$ may be arbitrarily close to $\log_32$ it follows that $\subrad(\sA)=1$. Conversely by the fundamental theorem of arithmetic the spectral radius of a finite product can never equal $1$.
\end{example}
The reader, noticing the quite degenerate structure of this pair of matrices, might hope that the lower finiteness property should hold at least for \emph{generic} finite sets of matrices. This is in fact quite untrue. We note the following consequence of Theorem~\ref{th:GL2+} and Corollary~\ref{co:iff-GLd}. By abuse of notation we define the lower spectral radius of a $k$-tuple $\sA \in \GL_d(\R)^k$ to be the lower spectral radius of the associated set, and similarly we say that a $k$-tuple is $\ell$-dominated if and only if the associated set is.

\begin{theorem}\label{th:no-min}
Let $k \geq 2$ and define $\cU \subset \GL_2^+(\R)^k$ 
as the largest open set such that all $\sA=(A_1, \dots, A_k)\in \cU$ satisfies the following conditions:
\begin{itemize}
\item
$\det A_1 < \det A_j$ for all $j = 2, 3, \dots, k$.
\item
The matrix $A_1$ has distinct real eigenvalues.
\item
$\sA$ is not $1$-dominated.
\end{itemize}
Then 
the set
\[
\cR:=\left\{\sA \in \cU \colon \subrad(\sA)=\subrad\left(\wed^2\sA\right)^{1/2}\right\}
\]
is a dense $G_\delta$ subset of $\cU$, and every $\sA \in \cR$ fails to have the lower finiteness property.
\end{theorem}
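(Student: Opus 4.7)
The plan is to express $\cR$ as a countable intersection of open dense subsets of the Baire space $\cU$, and then deduce the failure of the lower finiteness property from a short determinant calculation. For each integer $n \geq 1$, define
\[ V_n := \bigl\{ \sA \in \cU : \subrad(\sA) < \subrad(\wed^2 \sA)^{1/2} + 1/n \bigr\}. \]
Because $\subrad$ is upper semi-continuous on $\cK(\GL_2(\R))$ by \eqref{eq:subrad-inf}, while the map $\sA \mapsto \subrad(\wed^2 \sA)^{1/2} = \min_{1 \le j \le k}(\det A_j)^{1/2}$ is continuous on $\GL_2^+(\R)^k$, each $V_n$ is open in $\cU$. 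The inequality \eqref{eq:wedges} gives $\subrad(\sA) \ge \subrad(\wed^2 \sA)^{1/2}$ for every $\sA$, so $\cR = \bigcap_{n \ge 1} V_n$, whence $\cR$ is $G_\delta$ in $\cU$. Since $\cU$ is an open subset of $M_2(\R)^k$, it is a Baire space.

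The main step is the density of each $V_n$ in $\cU$. Fix $\sA \in \cU$; since $\sA$ is not $1$-dominated while every element of $\cK(\GL_2(\R))$ is $2$-dominated by convention, $\ell(\sA) = 2$. Formula~\eqref{eq:formula-GL2+}, which holds for arbitrary $\sA \in \cK(\GL_2^+(\R))$, then reads
\[ \liminf_{\theta \to 0} \subrad(R_\theta \sA) = \subrad(\wed^2 \sA)^{1/2}. \]
Hence there exist arbitrarily small angles $\theta$ with $\subrad(R_\theta \sA) < \subrad(\wed^2 \sA)^{1/2} + 1/n$. Since $\det R_\theta = 1$, rotation preserves every determinant, so $\subrad(\wed^2 R_\theta \sA) = \subrad(\wed^2 \sA)$; and since $\cU$ is open, $R_\theta \sA \in \cU$ for all sufficiently small $\theta$. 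Therefore $R_\theta \sA \in V_n$ and $R_\theta \sA \to \sA$, so $V_n$ is dense. The Baire category theorem now implies that $\cR$ is dense $G_\delta$ in $\cU$.

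Suppose for contradiction that some $\sA \in \cR$ admits a product $P = A_{i_m} \cdots A_{i_1}$ with $\rho(P)^{1/m} = \subrad(\sA) =: s$. Then $s^2 = \subrad(\wed^2 \sA) = \det A_1$, the strictly smallest determinant among $A_1, \dots, A_k$. Using the elementary bound $\rho(M)^2 \ge \det M$ valid for every $M \in \GL_2^+(\R)$, we obtain
\[ s^{2m} = \rho(P)^2 \ge \det P = \prod_{j=1}^{m} \det A_{i_j} \ge (s^2)^m = s^{2m}. \]
Equality throughout forces $\det A_{i_j} = s^2$ for each $j$, and strict uniqueness of the minimum determinant then forces $A_{i_j} = A_1$ for every $j$. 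Thus $P = A_1^m$ and $\rho(A_1)^2 = \det A_1$. But $A_1$ has distinct real eigenvalues $\lambda_1, \lambda_2$ of the same sign (since $\det A_1 > 0$), so $|\lambda_1| \ne |\lambda_2|$ and
\[ \rho(A_1)^2 = \max(\lambda_1^2, \lambda_2^2) > \lambda_1 \lambda_2 = \det A_1, \]
the desired contradiction.

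The only substantive input is formula~\eqref{eq:formula-GL2+}; the remainder is Baire category together with the elementary inequality $\rho(M)^2 \ge \det M$ for positive-determinant $2 \times 2$ matrices. The only point requiring care is verifying that the small rotation $R_\theta \sA$ supplied by \eqref{eq:formula-GL2+} still lies in $\cU$, which is immediate because $\cU$ is \emph{defined} as an open set and $R_\theta \to \Id$ as $\theta \to 0$.
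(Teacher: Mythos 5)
Your proof is correct, and it reaches the paper's conclusion by a slightly different organization of the Baire-category step. The paper deduces the dense-$G_\delta$ claim by identifying $\cR$ with the set of continuity points of the upper semi-continuous function $\subrad$ on $\cU$ --- the inclusion $\cR\subseteq\{\text{continuity points}\}$ coming from Corollary~\ref{co:iff-GLd} (hence ultimately from the higher-dimensional Theorem~\ref{th:formula-GLd}) and the reverse inclusion from Theorem~\ref{th:GL2+} --- and then invokes the classical fact that the continuity points of a semicontinuous function on a Baire space form a dense $G_\delta$. You instead write $\cR=\bigcap_n V_n$ explicitly, obtain openness of each $V_n$ from upper semicontinuity of $\subrad$ together with continuity of $\sA\mapsto\min_j(\det A_j)^{1/2}$, and obtain density directly from formula~\eqref{eq:formula-GL2+} (equivalently Lemma~\ref{le:theorem}), using that rotations preserve determinants and that $R_\theta\sA\in\cU$ for small $\theta$ because $\cU$ is by definition open. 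This makes your argument a little lighter: it needs only the two-dimensional rotation formula and elementary semicontinuity, and never appeals to Corollary~\ref{co:iff-GLd} or Theorem~\ref{th:formula-GLd}, at the cost of a small amount of explicit bookkeeping that the paper's abstract formulation hides; it also sidesteps the set-versus-tuple identification implicit in the paper's use of the corollary. The failure of the lower finiteness property is proved by essentially the same determinant computation in both treatments: your equality-case contradiction is a repackaging of the paper's two-case argument (all indices equal to $1$ versus some index at least $2$), and your final step $\rho(A_1)^2>\det A_1$ uses the distinct-real-eigenvalue hypothesis exactly as the paper does.
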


\begin{proof}
By definition the set $\cU$ is open,
and it function $\ell$ (the least index of domination) is constant equal to $2$ there.
It follows from Corollary~\ref{co:iff-GLd} that 
the set $\cR$ is thus precisely the set of points of continuity of $\subrad$ in $\cU$, and since $\subrad$ is an upper semi-continuous function and $\cU$ is a Baire space this set is a dense $G_\delta$. Now fix any $\sA \in \cR$, and consider a finite product $A_{i_n} \cdots A_{i_1}$ of matrices in $\sA$. If every $i_j$ is equal to $1$ then we have $\rho(A_{i_n}\cdots A_{i_1})^{1/n}=\rho(A_1)>(\det A_1)^{1/2}$. If not then $\rho(A_{i_n}\cdots A_{i_1})^{1/n} \geq (\det A_{i_n}\cdots A_{i_1})^{1/2n} > (\det A_1)^{1/2}$. In either case we have $\rho(A_{i_n}\cdots A_{i_1})^{1/n} > (\det A_1)^{1/2} = \subrad(\wedge^2 \sA)^{1/2} = \subrad(\sA)$. Since the product $A_{i_n} \cdots A_{i_1}$ is arbitrary, we have proved that $\sA$ does not have the lower finiteness property.
\end{proof}

\begin{remark}
The set $\cU$ is nonempty: indeed a sufficient condition for the failure of $1$-domination which is satisfied on an open set is that the semigroup generated by $\sA$ should include a matrix with non-real eigenvalues. The set $\cU$ for $k=2$ thus in particular includes Example~\ref{ex:simple}.
\end{remark}

\begin{remark}
One may extend Theorem~\ref{th:no-min} to show not only that every $\sA \in \cR$ fails to satisfy the lower finiteness property -- and hence there is no periodic sequence $(i_j) \in \{1,\dots,k\}^\N$ such that $\lim_{n \to \infty}\|A_{i_n}\cdots A_{i_1}\|^{1/n} = \subrad(\sA)$ -- but moreover for each $\sA \in \cR$ there is no ergodic shift-invariant measure on $\{1,\dots,k\}^\N$ with respect to which $\|A_{i_n} \cdots A_{i_1} \|^{1/n} \to \subrad(\sA)$ almost everywhere. This contrasts sharply with the situation for the upper spectral radius, where an ergodic measure with the analogous property always exists \cite{M13}. Since we make no use of ergodic theory in this article it would be digressive for us to introduce the concepts required to prove this statement. Interested readers who are already familiar with ergodic theory should anyway not have difficulty in modifying the proof of Theorem \ref{th:no-min} in this direction.
\end{remark}

At the time of writing very few algorithms for the computation of the lower spectral radius have been proposed; we are aware only of \cite{GP,PJB}. These algorithms both operate in the context where $\sA$ preserves a nested pair of invariant cones, which implies that $\sA$ is a limit of $1$-dominated sets (at least when $\sA$ consists of invertible matrices). The Guglielmi-Protasov algorithm for the computation of the lower spectral radius produces an exact result in the case where the set of matrices being examined satisfies the lower finiteness property. In the case where an invariant cone is strictly preserved it does not seem unreasonable to us to believe that this condition might be satisfied generically. Once the context of $1$-dominated sets is left behind, however, Theorem~\ref{th:no-min} shows that algorithms which depend on the lower finiteness property cannot directly succeed in computing $\subrad(\sA)$ for typical finite sets of matrices.

These observations however do not constitute immediate grounds for pessimism. In the situation where $\sA \in \cK(\GL_2^+(\R))$ belongs to the above set $\cR$ it is clearly problematic to compute $\subrad(\sA)$ via the norms or spectral radii of products of elements of $\sA$: yet by definition $\cR$ is precisely the set on which $\subrad(\sA)=\subrad(\wed^2\sA)^{1/2}$, and the latter quantity is trivial to compute, being precisely the square root of the minimum of the determinants of the elements of $\sA$. More generally, if it could be shown that the relation
\begin{equation}\label{eq:generic-equals}\subrad(\sA)=\subrad\left(\wed^{\ell(\sA)}\sA\right)^{1/\ell(\sA)}\end{equation}
held generically for finite sets $\sA \subset \GL_d(\R)$ then the problem of computing $\subrad$ for a generic finite set of matrices would reduce to the problem of computing the lower spectral radius of a $1$-dominated set, since the set $\wed^{\ell(\sA)}\sA$ on the right-hand side is necessarily $1$-dominated. 

In order to implement such a programme for computing the lower spectral radius it would of course be necessary to be able to determine algorithmically whether or not a finite set of $d \times d$ matrices is $k$-dominated, and also to give explicit examples of multicones for sets of matrices which are known to be $k$-dominated. So far it is known that for pairs of $\SL_2(\R)$-matrices (and hence for pairs of matrices with positive determinant) there exists a terminating algorithm for determining whether or not that pair is $1$-dominated (\cite[Remark~3.14]{ABY}) and it is also possible to construct $1$-multicones for $1$-dominated pairs in an explicit manner (\cite[\S{3.8}]{ABY}). However, even for triples of $\SL_2(\R)$-matrices this problem remains open, being hampered by the much more complicated topology of the set of $1$-dominated tuples in $\SL_2(\R)^3$ (compare \cite[Proposition 4.18]{ABY} with \cite[Theorem 3.2]{ABY}).

On a more pessimistic note, while we have some hope that \eqref{eq:generic-equals} should hold generically in the topological sense for finite sets $\sA\subset \GL_d(\R)$, the question of whether that relation should be expected to hold for finite subsets of $\GL_d(\R)$ which are typical in the sense of Lebesgue measure on $\GL_d(\R)^k$ may have an entirely different answer. We discuss both of these questions further in \S\ref{se:future} below.

\medskip

The observant reader will have noticed that whereas the results which we quote for the upper spectral radius refer to subsets of $M_d(\R)$, the results which we prove in this article refer only to subsets of $\GL_d(\R)$. The reason for this difference is that at the present time we do not have a satisfactory definition of what it means for a subset of $M_d(\R)$ to be $k$-dominated. This problem is also discussed further in \S\ref{se:future}.

\subsection{Organisation of the article}
The remainder of this article is structured as follows. 
The Lipschitz continuity result, Theorem~\ref{th:lipschitz}, is proved in \S\ref{se:lipschitz};
the proof itself is independent of the rest of the paper.
In \S\ref{se:access} we prove certain preliminary results that will be useful in the proofs of Theorems~\ref{th:GL2+} and \ref{th:formula-GLd}, and these are respectively given in \S\ref{se:GL2+} and \S\ref{se:iff-GLd}.
In \S\ref{se:examples} we exhibit further examples of the discontinuity of the lower spectral radius, and in \S\ref{se:future} we discuss some directions for future research.


\section{Lipschitz continuity} \label{se:lipschitz}

If $d=1$ then the elements of every $\sA \in \cK(\GL_d(\R))$ commute and it is not difficult to see that the lower spectral radius of $\sA$ is simply the minimum of the norms of its individual elements, which is trivially a Lipschitz continuous function of $\sA$. We shall therefore assume throughout this section that $d>1$. We will deduce Theorem~\ref{th:lipschitz} from the following result, the proof of which constitutes the principal content of this section.

\begin{proposition}\label{pr:lipschitz}
Let $\sK \in \cK(\GL_d(\R))$ be $1$-dominated. Then there exists a constant $K>0$ such that for every pair of nonempty compact sets $\sA, \sB \subseteq \sK$ we have $|\log \subrad(\sA)-\log\subrad(\sB)|\leq Kd_H(\sA,\sB)$.
\end{proposition}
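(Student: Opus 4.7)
The plan is to exploit the $1$-domination of $\sK$ to construct, for each nonempty compact $\sA \subseteq \sK$, an analogue of a Barabanov norm adapted to the lower spectral radius. Fixing a $1$-multicone $\cC$ for $\sK$ (which is automatically a $1$-multicone for every $\sA \subseteq \sK$), I would produce a continuous, positively homogeneous function $f_\sA \colon \cC \to (0,\infty)$ satisfying the functional equation
\begin{equation*}
\min_{A \in \sA} f_\sA(Av) \;=\; \subrad(\sA)\, f_\sA(v) \qquad \forall\, v \in \cC.
\end{equation*}
The existence of such an $f_\sA$ is the content of the forthcoming Proposition~\ref{pr:barabanov}; it is obtained as a fixed point of the nonlinear Perron-type operator $g \mapsto \min_{A \in \sA} g(A\cdot) / \subrad(\sA)$ on an appropriate space of continuous homogeneous functions on $\cC$. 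The key ingredient making this fixed point exist is that $\sK$ pushes $\cC$ strictly into its own interior, producing a uniform contraction of the induced projective action in a Hilbert-type metric on the connected components of $\cC$, broadly following Bousch's strategy~\cite{Bou,Bou2} for calibrated subactions in ergodic optimisation.

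The second step is to secure quantitative control over $f_\sA$ that is uniform in $\sA$. Specifically, I would prove the existence of constants $M, L > 0$ depending only on $\sK$ such that for every compact $\sA \subseteq \sK$ and every $u, v \in \cC$,
\begin{equation*}
M^{-1} \|v\| \,\leq\, f_\sA(v) \,\leq\, M\|v\| \qquad \text{and} \qquad |f_\sA(u) - f_\sA(v)| \,\leq\, L\|u-v\|.
\end{equation*}
These bounds should follow from the uniform angular separation between $\cC$ and the forbidden $(d-1)$-dimensional subspace in Definition~\ref{de:dominated}(b)(v), the compactness of $\sK$ in operator norm, and the projective contraction mentioned above, all of which depend only on $\sK$ and the fixed multicone $\cC$ and not on the particular $\sA \subseteq \sK$.

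With $f_\sA$ in hand, the Lipschitz estimate is a brief perturbation. Fix $\sA, \sB$ compact in $\sK$ with $d_H(\sA,\sB) = \epsilon$, pick any $v \in \cC$, and for each $B \in \sB$ select $A \in \sA$ with $\|A - B\| \leq \epsilon$. The Lipschitz bound for $f_\sA$ combined with the functional equation gives
\begin{equation*}
f_\sA(Bv) \,\geq\, f_\sA(Av) - L\epsilon\|v\| \,\geq\, \bigl(\subrad(\sA) - LM\epsilon\bigr) f_\sA(v).
\end{equation*}
Since $\sB \subseteq \sK$ also pushes $\cC$ into its interior, iterating along an arbitrary product yields $f_\sA(B_n \cdots B_1 v) \geq (\subrad(\sA) - LM\epsilon)^n f_\sA(v)$; converting back to $\|\cdot\|$ via the comparability of $f_\sA$, taking $n$th roots and letting $n \to \infty$ delivers $\subrad(\sB) \geq \subrad(\sA) - LM\epsilon$. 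Symmetry in $\sA, \sB$ gives a two-sided Lipschitz estimate for $\subrad$; the passage to $\log \subrad$ then uses the elementary uniform lower bound $\subrad(\sA) \geq (\min_{A \in \sK} |\det A|)^{1/d} > 0$, valid for any compact $\sK \subseteq \GL_d(\R)$.

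The principal obstacle lies in constructing $f_\sA$ with the stated uniform properties. The multicone $\cC$ need not be connected nor convex, so one cannot invoke classical invariant-cone techniques such as those underpinning the approach of Guglielmi and Protasov \cite{GP}; the construction must rest entirely on the projective contraction furnished by $1$-domination. Moreover the uniform comparability of $f_\sA$ with $\|\cdot\|$ and the uniform Lipschitz bound require care, since they must depend only on $\sK$ and $\cC$ and not on the particular $\sA \subseteq \sK$ whose Perron-type operator is being iterated. Once these properties are in place, the perturbative step above closes the proof.
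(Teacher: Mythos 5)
Your proposal is correct and follows essentially the same route as the paper: a lower Barabanov-type function on the multicone (your multiplicative $f_\sA$ is just $\exp\psi_\sA$ from Proposition~\ref{pr:barabanov}), with Lipschitz and norm-comparability constants uniform over compact subsets of $\sK$, fed into a one-step perturbation estimate that is iterated along products and then symmetrised. The only differences are cosmetic: the paper works additively in $\log$ scale and attaches the Barabanov function to the other set of the pair, which by symmetry changes nothing.
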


Throughout this section we shall use the notation $P\R^d$ to denote the space of all $1$-dimensional subspaces of $\R^d$. We shall use the notation $\overline{u} \in P\R^d$ to denote the subspace generated by the nonzero vector $u \in \R^d$. We define a distance function on $P\R^d$ by 
\begin{equation}\label{eq:def_d}
d(\overline{u},\overline{v}) = \frac{\|u\wedge v\|} {\|u\|\,\|v\|} \, ,
\end{equation} 
which is clearly independent of the choice of representative vector $u \in \overline{u}$ and $v \in \overline{v}$. Using the definition \eqref{eq:wedge-inner-product-definition} of the inner product on $\wed^2\R^d$ we have
\[\|u \wed v\|^2 = \langle u \wed v, u \wed v\rangle = \langle u,u\rangle \langle v,v\rangle - \langle u,v\rangle \langle v,u\rangle = \|u\|^2 \|v\|^2\left(1-\cos^2 \angle (u,v)\right)\]
from which it follows that $d(\overline{u},\overline{v})$ is precisely the sine of the angle between the spaces $\overline{u}$ and $\overline{v}$. For a proof that $d$ is a metric on $P\R^d$ we refer the reader to \cite[p.121]{Arnold}.  

If $\cC$ is an homogeneous subset of $\R^d\setminus\{0\}$ -- that is, it is closed under multiplication by nonzero scalars -- then $\overline{\cC} := \{ \overline{u} \colon u \in \cC\}$ is a well-defined subset of $P\R^d$; moreover $\overline{\cC}$ uniquely determines $\cC$.
If $\cC$ is a multicone for some set of matrices in the sense of  Definition~\ref{de:dominated}(\ref{i:dom-multicone})
then we say that $\overline{\cC}$ is a \emph{projective multicone} for that set of matrices.
Before commencing the proof of Proposition~\ref{pr:lipschitz} we require some preliminary results, the majority of which relate to the action of a $1$-dominated set $\sK$ on a $1$-multicone $\cC \subset \R^d\setminus\{0\}$.

\subsection{Preliminary estimates}

\begin{lemma}\label{le:cone-basic}
Let $\cC_0$, $\cC \subset \R^d \setminus \{0\}$ be homogeneous sets such that 
$\overline{\cC_0}$ and $\overline{\cC}$ are closed in $P \R^d$, with $\overline{\cC_0} \subset \Int \overline{\cC}$, and suppose that there exists a $\left(d-1\right)$-dimensional subspace of $\R^d$ that does not intersect $\cC$. Then there exists a constant $\kappa_0>0$ such that for every linear map $A \colon \R^d \to \R^d$ that satisfies $A \cC \subseteq \cC_0$, and every vector $v \in \cC_0$, we have $\|Av\|\geq \kappa_0\|A\|\|v\|$. 
\end{lemma}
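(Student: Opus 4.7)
The plan is to introduce the auxiliary vector $A^T n$, where $n$ is a unit normal to $H$, and to show both that $\|A^T n\|$ is comparable to $\|A\|$ and that $|\langle v, A^T n\rangle|$ is comparable to $\|v\|\,\|A^T n\|$ when $v \in \cC_0$. Since $\|Av\|\ge |\langle Av, n\rangle| = |\langle v, A^T n\rangle|$, chaining these two comparisons will deliver the lemma. To set things up, using compactness of $\overline{\cC_0}$ inside the open set $\Int \overline{\cC}$ I first fix $\epsilon_0 > 0$ such that the open $\epsilon_0$-neighbourhood of $\overline{\cC_0}$ in $P\R^d$ lies in $\Int \overline{\cC}$; using compactness of $\overline{\cC_0}$ together with $\cC_0 \cap H = \emptyset$, I next fix $\eta_0 > 0$ such that $|\langle u, n\rangle| \ge \eta_0\,\|u\|$ for every $u \in \cC_0$. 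Observe that $\Int\overline{\cC}$ is non-empty, so $\cC$ spans $\R^d$.

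For the first comparison, for every $u \in \cC$ the hypothesis $Au \in \cC_0$ forces $|\langle u, A^T n\rangle| = |\langle Au, n\rangle| \ge \eta_0 \|Au\|$, which has two consequences: first, $\|Au\|/\|u\| \le \|A^T n\|/\eta_0$ for all $u \in \cC$; and second, the hyperplane $(A^T n)^\perp$ is disjoint from $\cC$. Separately, picking a unit vector $\xi$ with $\|A\xi\|=\|A\|$, the estimate $\|Au\|\ge \|A\|\,|\langle u, \xi\rangle|$ (which follows from the singular value decomposition) together with
\[
\gamma := \inf_{w \in S^{d-1}}\ \sup_{u \in \cC \cap S^{d-1}} |\langle u, w\rangle|
\]
yields $\sup_{u\in\cC\cap S^{d-1}}\|Au\|\ge \gamma\,\|A\|$. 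That $\gamma$ is strictly positive follows from the fact that the inner supremum is a $1$-Lipschitz function of $w$ on the compact sphere $S^{d-1}$ and is strictly positive for each $w$, since $\cC$ (spanning $\R^d$) cannot lie in the hyperplane $w^\perp$. Combining the two estimates yields $\|A^T n\| \ge \gamma\,\eta_0\,\|A\|$.

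For the second comparison, the key identity
\[
\frac{|\langle v, A^T n\rangle|}{\|v\|\,\|A^T n\|} = \sin \angle(v, (A^T n)^\perp)
\]
expresses the left-hand side as the projective distance from $\overline{v}\in P\R^d$ to the projective hyperplane $\overline{(A^T n)^\perp}$. Because $(A^T n)^\perp$ is disjoint from $\cC$, the projective hyperplane $\overline{(A^T n)^\perp}$ is disjoint from $\overline{\cC}$; and since $\overline{v}\in \overline{\cC_0}$, whose open $\epsilon_0$-neighbourhood lies in $\overline{\cC}$, this hyperplane must sit at projective distance at least $\epsilon_0$ from $\overline{v}$. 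Putting everything together, I obtain
\[
\|Av\| \ge |\langle v, A^T n\rangle| \ge \epsilon_0\,\|v\|\,\|A^T n\| \ge \epsilon_0\,\gamma\,\eta_0\,\|v\|\,\|A\|,
\]
so one may take $\kappa_0 := \epsilon_0\,\gamma\,\eta_0$.

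The step I expect to be the main obstacle is the projective-geometric argument in the third paragraph, where the rigid inclusion $\overline{\cC_0}\subset\Int\overline{\cC}$ must be converted into a uniform quantitative lower bound on the angle between $v \in \cC_0$ and the hyperplane $(A^T n)^\perp$; the singular value decomposition and compactness ingredients appearing in the other paragraphs are comparatively routine.
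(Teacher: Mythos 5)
Your proof is correct, and it takes a genuinely different route from the paper's. The paper argues by contradiction: it first replaces the Euclidean inner product by one in which any two directions of $\cC_0$ make an angle less than $\pi/4$ (possible because $\overline{\cC_0}$ is compact and avoids the projectivised hyperplane), then takes norm-one maps $A_k$ with $A_k\cC\subseteq\cC_0$ and unit vectors $u_k\in\cC_0$ with $A_ku_k\to0$, and uses the top and bottom singular directions of $A_k$ near the limiting kernel direction to produce two vectors of $\cC$ whose images are mutually perpendicular elements of $\cC_0$, contradicting the angle bound; no explicit constant emerges. You instead give a direct quantitative argument: the hypothesis $A\cC\subseteq\cC_0$ forces the hyperplane $(A^Tn)^\perp$ to miss $\cC$, so any $\overline{v}\in\overline{\cC_0}$, sitting $\epsilon_0$-deep inside $\overline{\cC}$, lies at projective distance at least $\epsilon_0$ from it, while the duality estimate $\|A^Tn\|\ge\gamma\eta_0\|A\|$ (with $\gamma$ a thickness constant of $\cC$, requiring only that $\cC$ spans $\R^d$, and $\eta_0$ the transversality of $\cC_0$ to the hypothesised hyperplane) converts this into $\|Av\|\ge|\langle v,A^Tn\rangle|\ge\epsilon_0\gamma\eta_0\,\|A\|\,\|v\|$. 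I checked the individual steps: any unit $\xi$ with $\|A\xi\|=\|A\|$ is indeed a top right-singular vector, so $\|Au\|\ge\|A\|\,|\langle u,\xi\rangle|$; $\gamma>0$ follows from your Lipschitz-plus-compactness argument; and $|\langle v,w\rangle|/(\|v\|\,\|w\|)$ does equal the distance, in the metric \eqref{eq:def_d}, from $\overline{v}$ to the projectivisation of $w^\perp$, so the $\epsilon_0$-neighbourhood argument (taken with respect to that same metric) closes the chain. What each approach buys: yours yields an explicit, effective constant $\kappa_0=\epsilon_0\gamma\eta_0$ determined by the geometry of $\cC_0$, $\cC$ and the hyperplane, and avoids both the renorming and the compactness/contradiction step; the paper's proof is softer and shorter but purely non-constructive. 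The only caveats are trivial ones: the case $\cC_0=\emptyset$ is vacuous (and is needed for your remark that $\Int\overline{\cC}\neq\emptyset$), and $A^Tn\neq0$ should be noted once, which follows exactly as in your disjointness argument.
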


\begin{proof}
Consider the open set of all $\overline{u} \in P\R^d$ which do not intersect the projectivisation of the hypothesised $(d-1)$-dimensional subspace. 
The set $\overline{\cC_0}$ is compact and is contained in this set, so we may define a new inner product on $\R^d$ with respect to which the angle between any two elements of $\cC_0$ is less than, say, $\frac{\pi}{4}$. The resulting change of norm clearly has no effect on the validity of the lemma beyond altering the precise value of the constant $\kappa_0$. In particular we may freely assume that no two elements of $\cC_0$ are perpendicular. We will show that if the claimed constant $\kappa_0$ does not exist then this assumption is contradicted.

Let us assume for a contradiction that such a constant $\kappa_0$ does not exist. It follows that we may find a sequence of matrices $A_n$ each having norm $1$ and a sequence of unit vectors $u_n$ with $u_n \in \cC_0$  such that $A_n u_n \to 0$ and such that $A_n \cC \subseteq \cC_0$ for every $n \geq 1$. By passing to a subsequence we may assume that the sequences $A_n$ and $u_n$ converge to limits $A \in M_d(\R)$ and $u \in \cC_0$, and it is clear that these limit objects must satisfy $Au=0$.

Since $u$ is interior to $\cC$ we may find a constant $\delta\in(0,1)$ such that if $\|w\|=1$ and $\lambda \in [-2\delta,2\delta]$ then ${u+\lambda w} \in \cC$. Since the eigenspaces of $A_n^*A_n$ converge to those of $A^*A$ as $n \to \infty$, and $u$ belongs to the kernel of $A$, we may choose a sequence of unit vectors $v_n \in \R^d$ such that $A_n^*A_n v_n = \sigma_d(A_n)^2v_n$ for every $n \geq 1$ and such that $v_n \to u$. Fix $n$ large enough that $\|A_nv_n\| \leq \delta$, and also large enough that if $\|w\|=1$  and $\lambda \in [-\delta,\delta]$ then ${v_n+\lambda w} \in \cC$.

Choose now a vector $w_n\in \R^d$ such that $\|A_nw_n\|=\|w_n\|=1$.  Since $w_n$ and $v_n$ are eigenvectors of the symmetric matrix $A^*_nA_n$ which correspond to different eigenvalues, we have $\langle v_n ,w_n\rangle=0$ and therefore $\langle A_nv_n , A_nw_n\rangle = \langle v_n, A_n^*A_nw_n\rangle=0$. We have ${v_n\pm\lambda w_n} \in \cC$ for all $\lambda \in [-\delta,\delta]$, and hence in particular ${A_n(v_n\pm\lambda w_n)} \in {\cC_0}$ for all such $\lambda$. Given such a $\lambda$ we have
\[\langle A_n(v_n+\lambda w_n),A_n(v_n - \lambda w_n)\rangle = \|A_nv_n\|^2-\lambda^2.\]
Taking $\lambda =\|A_nv_n\| \in (0,\delta]$ we find that the two vectors $ A_n(v_n\pm\lambda w_n) \in {\cC_0}$ are mutually perpendicular, contradicting the choice of inner product made at the beginning of our argument. We deduce the existence of the claimed constant $\kappa_0$. 
\end{proof}

The following is a straightforward corollary of Lemma~\ref{le:cone-basic}:

\begin{lemma}\label{le:cone-vector}
Let $\sK \in \cK(\GL_d(\R))$ be $1$-dominated, and let $\cC \subset \R^d\setminus\{0\}$ be a $1$-multicone for $\sK$. 
Then there exists a constant $\kappa \in (0,1)$ such that for every $u \in \cC$ and every $A_1,\ldots,A_n \in \sK$, we have $\|A_n\cdots A_1u\|\geq \kappa\|A_n\cdots A_1\|\mathord{\cdot}\|u\|$.
\end{lemma}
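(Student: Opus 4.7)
The natural plan is to reduce to Lemma~\ref{le:cone-basic} by peeling off the first factor of the product and handling the rest with compactness. Set $\cC_0:=\sK\cC$. First I would verify the hypotheses of Lemma~\ref{le:cone-basic} with this $\cC_0$ and the given $\cC$: $\overline{\cC}$ is closed in $P\R^d$ because $\cC$ is relatively closed in $\R^d\setminus\{0\}$ by Definition~\ref{de:dominated}(\ref{i:dom-multicone})(i); $\overline{\cC_0}$ is closed because it is the continuous image of the compact set $\sK\times\overline{\cC}$ under the map $(A,\overline{w})\mapsto\overline{Aw}$; the strict invariance condition of Definition~\ref{de:dominated}(\ref{i:dom-multicone})(iii) precisely asserts $\overline{\cC_0}\subset\Int\overline{\cC}$; and Definition~\ref{de:dominated}(\ref{i:dom-multicone})(v) with $k=1$ furnishes the required $(d-1)$-dimensional subspace missing $\cC$.

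Now let $u\in\cC$ and $A_1,\ldots,A_n\in\sK$ with $n\ge 2$, and write $B:=A_n\cdots A_2$ and $v:=A_1u$. Since $A_1\in\sK$ we have $v\in\sK\cC=\cC_0$. Iterating the inclusion $\sK\cC\subseteq\cC_0\subseteq\cC$ gives $A_{n-1}\cdots A_2\,\cC\subseteq\cC$, and one more application of $A_n$ yields $B\cC\subseteq A_n\cC\subseteq\cC_0$. Lemma~\ref{le:cone-basic} then supplies a constant $\kappa_0>0$, depending only on $\sK$ and $\cC$, such that
$$
\|A_n\cdots A_1u\|=\|Bv\|\ge\kappa_0\,\|B\|\,\|v\|.
$$

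To convert this into the desired estimate, I would use compactness of $\sK\subset\GL_d(\R)$: set $M:=\sup_{A\in\sK}\|A\|<\infty$ and $\delta:=\inf_{A\in\sK}\sigma_d(A)>0$. Submultiplicativity gives $\|B\|\ge M^{-1}\|A_n\cdots A_1\|$, while $\|v\|=\|A_1u\|\ge\delta\|u\|$, so altogether $\|A_n\cdots A_1u\|\ge(\kappa_0\delta/M)\|A_n\cdots A_1\|\,\|u\|$. The case $n=1$ is handled directly by the same compactness bounds: $\|A_1u\|\ge\delta\|u\|\ge(\delta/M)\|A_1\|\,\|u\|$. Taking $\kappa:=\min\{1/2,\,\delta/M,\,\kappa_0\delta/M\}$ yields a constant in $(0,1)$ that works uniformly in $n$ and in the choice of matrices. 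I do not anticipate any serious obstacle here; Lemma~\ref{le:cone-basic} already encodes the essential work, and the only subtleties are that one must factor out the \emph{last} matrix $A_n$ (rather than the first) in order to arrange $B\cC\subseteq\cC_0$, and that the base case $n=1$ must be treated by hand because the empty product does not satisfy the hypothesis of Lemma~\ref{le:cone-basic}.
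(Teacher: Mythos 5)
Your reduction to Lemma~\ref{le:cone-basic} with $\cC_0:=\sK\cC$ is correct and is exactly the route the paper intends, since the paper gives no separate argument and simply records this lemma as a straightforward corollary of Lemma~\ref{le:cone-basic}; your verification of the hypotheses and the compactness bounds ($M$, $\delta$) fill in the details appropriately. One small slip in your closing aside: you actually peel off $A_1$ (so that $v=A_1u\in\cC_0$), not $A_n$ --- the inclusion $B\cC\subseteq\cC_0$ holds automatically because the leftmost factor $A_n$ of $B$ lies in $\sK$ --- but this does not affect the argument itself.
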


We will find it useful to consider a modification of the `adapted metric' used in the proof of \cite[Theorem B]{BG}, which we construct below.

\begin{lemma}\label{le:metric}
Let $\sK \in \cK(\GL_d(\R))$ be $1$-dominated, and let $\overline{\cC} \subset P \R^d$ be a projective $1$-multicone for $\sK$.
Then there exist constants $\theta \in (0,1)$ and $C_2>1$
and a metric $d_\infty$ on $\overline{\cC}$ such that 
\begin{equation}\label{eq:contraction}	
d_\infty(\overline{Au},\overline{Av})\leq \theta d_\infty(\overline{u},\overline{v})
\end{equation} 
and 
\begin{equation}\label{eq:lip_equiv}
d\left(\overline{u},\overline{v}\right) \leq d_\infty\left(\overline{u},\overline{v}\right) \leq C_2 d\left(\overline{u},\overline{v}\right)
\end{equation}
for all $A \in \sK$ and $\overline{u},\overline{v} \in \overline{\cC}$.
\end{lemma}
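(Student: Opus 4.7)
The plan is to build $d_\infty$ as a Bousch-style supremum of rescaled pullbacks of the projective metric $d$ under all finite words in $\sK$. Concretely, let $\tau\in(0,1)$, $C>1$ be the constants from Definition~\ref{de:dominated}(\ref{i:dom-singular}) for $\sK$, and let $\kappa\in(0,1)$ be the constant from Lemma~\ref{le:cone-vector}. Choose any $\theta\in(\tau,1)$ and define, for $\overline{u},\overline{v}\in\overline{\cC}$,
\[
d_\infty(\overline{u},\overline{v}) := \sup_{n\geq 0}\ \theta^{-n}\sup_{A_1,\dots,A_n\in\sK} d\bigl(\overline{A_n\cdots A_1 u},\,\overline{A_n\cdots A_1 v}\bigr),
\]
where $n=0$ denotes the empty product. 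Since $A\cC\subset\Int\cC\subset\cC$ for every $A\in\sK$, all the iterates stay in $\cC$, so $d_\infty$ is well defined on $\overline{\cC}$.

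The first task is to check that this supremum is finite and bounded above by a constant multiple of $d(\overline{u},\overline{v})$. For any product $P=A_n\cdots A_1$ with $A_i\in\sK$, the identity $\|Pu\wed Pv\|=\|\wed^2 P (u\wed v)\|$ combined with $\|\wed^2 P\|=\sigma_1(P)\sigma_2(P)$ gives
\[
d(\overline{Pu},\overline{Pv})=\frac{\|Pu\wed Pv\|}{\|Pu\|\,\|Pv\|}\leq \frac{\sigma_1(P)\sigma_2(P)\,\|u\|\,\|v\|}{\|Pu\|\,\|Pv\|}\,d(\overline{u},\overline{v}).
\]
Applying Lemma~\ref{le:cone-vector} to $u,v\in\cC$ yields $\|Pu\|\geq\kappa\sigma_1(P)\|u\|$ and likewise for $v$, so the coefficient is at most $\sigma_2(P)/(\kappa^2\sigma_1(P))\leq C\tau^n/\kappa^2$ by domination. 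Thus every term in the defining supremum is bounded by $(C/\kappa^2)(\tau/\theta)^n d(\overline{u},\overline{v})$, and since $\tau/\theta<1$ the supremum is at most $C_2:=\max(1,C/\kappa^2)$ times $d(\overline{u},\overline{v})$. The lower bound $d\leq d_\infty$ in \eqref{eq:lip_equiv} is immediate from the $n=0$ term.

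The contraction property \eqref{eq:contraction} follows from a reindexing argument: for any fixed $A\in\sK$, every product of length $n$ composed with $A$ on the right is itself a product of length $n+1$ in $\sK$, so
\[
d_\infty(\overline{Au},\overline{Av})=\sup_{n\geq 0}\theta^{-n}\sup_{A_1,\dots,A_n\in\sK} d\bigl(\overline{A_n\cdots A_1 A u},\overline{A_n\cdots A_1 A v}\bigr)
\leq \theta\cdot d_\infty(\overline{u},\overline{v}),
\]
since reparametrising by $m=n+1$ shows this equals $\theta$ times a supremum running over all products of length $\geq 1$, which is in turn bounded by $d_\infty(\overline{u},\overline{v})$. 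Symmetry and the triangle inequality for $d_\infty$ are inherited termwise from the corresponding properties of $d$, while $d_\infty(\overline{u},\overline{v})=0$ forces (via the $n=0$ term) $d(\overline{u},\overline{v})=0$, giving positivity. So $d_\infty$ is a genuine metric on $\overline{\cC}$.

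The principal technical point, and the only step that genuinely uses $1$-domination rather than formal manipulation, is the boundedness estimate. Without the lower bound $\|Pu\|\geq\kappa\sigma_1(P)\|u\|$ on vectors lying in $\cC$, one only gets a bound in terms of $\sigma_2(P)/\sigma_d(P)$ which need not decay; Lemma~\ref{le:cone-vector} is exactly what converts the hypothesis $\sigma_2(P)/\sigma_1(P)\leq C\tau^n$ on the top-gap ratio into the geometric decay of projective contraction we need to beat the blow-up factor $\theta^{-n}$ and obtain the uniform bilipschitz constant $C_2$.
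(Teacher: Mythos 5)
Your proof is correct and follows essentially the same route as the paper: the heart of both arguments is the identical estimate $d(\overline{Pu},\overline{Pv})\leq C\kappa^{-2}\tau^{n}\,d(\overline{u},\overline{v})$ obtained from $1$-domination together with Lemma~\ref{le:cone-vector}, packaged into an adapted metric. The only cosmetic difference is that the paper aggregates these terms as an unweighted sum over word lengths and extracts the contraction factor $\theta=1-C_2^{-1}$ a posteriori, whereas you use a $\theta^{-n}$-weighted supremum with $\theta\in(\tau,1)$ fixed in advance and get the contraction by reindexing; both are standard and equally valid.
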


Relation \eqref{eq:contraction} express that the adapted metric $d_\infty$
is uniformly contracted by the projective action of the matrices on $\sK$,
while \eqref{eq:lip_equiv} implies that the adapted metric is Lipschitz equivalent on $\overline{\cC}$
to the metric $d$ given by \eqref{eq:def_d}.

\begin{proof}
By Definition~\ref{de:dominated}(\ref{i:dom-singular}) there exist constants $C_1>1$ and $\tau \in (0,1)$ such that $\sigma_2(A_n\cdots A_1)\leq C_1\tau^n\sigma_1(A_n\cdots A_1)$ for all $A_1,\ldots,A_n \in \sK$. In this situation, for all $\overline{u},\overline{v}\in \overline{\cC}$ we have
\begin{align*}d\Big(\overline{A_n\cdots A_1u}, \ \overline{ A_n\cdots A_1v}\Big) &= \frac{\left\|A_n\cdots A_1 u \wedge A_n\cdots A_1v\right\|}{\left\|A_n \cdots A_1u\|\cdot \|A_n\cdots A_1v\right\|} \\
&\leq \frac{\sigma_1(A_n\cdots A_1)\sigma_2(A_n\cdots A_1)\|u\wedge v\|}{\|A_n\cdots A_1u\|\cdot \|A_n\cdots A_1v\|}\\
&\leq \frac{C_1\tau^n}{\kappa^2} \cdot \frac{\|u\wedge v\|}{\|u\|\mathord{\cdot}\|v\|} = C_1\kappa^{-2}\tau^n d\left(\overline{u},\overline{v}\right)\end{align*}
where $\kappa \in (0,1)$ is the constant provided by Lemma~\ref{le:cone-vector}. We may therefore define
\[
d_\infty\big(\overline{u},\overline{v}\big):=\sum_{n=0}^\infty \sup_{A_1,\ldots,A_n \in \sK} d\Big(\overline{A_n\cdots A_1 u}, \ \overline{A_n \cdots A_1 v}\Big)
\]
for every $\overline{u},\overline{v} \in \overline{\cC}$, and this sum is convergent and defines a metric on~ $\overline{\cC}$.
Fix  $\overline{u},\overline{v} \in \overline{\cC}$ and 
note that \eqref{eq:lip_equiv} holds with $C_2:= 1 + C_1\kappa^{-2}\tau /(1-\tau)$.
Moreover, for every $A \in \sK$,
\[
d_\infty\left(\overline{Au},\overline{Av}\right) \leq d_\infty\left(\overline{u},\overline{v}\right)-d\left(\overline{u},\overline{v}\right)
\le \theta d_\infty \left(\overline{u},\overline{v}\right)
\quad \text{where } \theta := 1- C_2^{-1} \, ,
\]
thus concluding the proof.
\end{proof}

We define a function $\varphi \colon \GL_d(\R) \times P\R^d \to \R$ as follows:
\begin{equation}\label{eq:def_varphi}
\varphi(A,\overline{u}) := \log \frac{\|Au\|}{\|u\|} \, .
\end{equation}

\begin{lemma}\label{le:varphi}
Let $\sK \in \cK(\GL_d(\R))$. 
Then there exists constants $C_3$, $C_4>0$ depending only on $\sK$ such that:
\begin{enumerate}[(a)]
\item\label{i:varphi_1}
for all $\overline{u} \in P\R^d$ the function $\varphi (\mathord{\cdot}, \overline{u})$ is $C_3$-Lipschitz continuous on $\sK$ with respect to the operator norm;
\item\label{i:varphi_2} 
for all $A \in \sK$ the function $\varphi(A,\mathord{\cdot})$ is $C_4$-Lipschitz continuous on $P\R^d$ with respect to the metric $d$ given by \eqref{eq:def_d}.
\end{enumerate}
\end{lemma}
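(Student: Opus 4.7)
Both parts are mean-value-type estimates, using compactness of $\sK$ to control the relevant quantities uniformly. Note first that $\varphi(A,\overline{u})=\log\|Au\|/\|u\|$ is homogeneous of degree zero in $u$, so we may (and will) take unit representatives $u$ of any $\overline{u}\in P\R^d$. Two basic observations drive the proof: the inequality
\[
|\log a-\log b|\le \frac{|a-b|}{\min(a,b)}\qquad(a,b>0),
\]
which follows from applying the mean value theorem to $\log$ on $[\min(a,b),\max(a,b)]$; and the fact that, since $\sK$ is a compact subset of $\GL_d(\R)$, there exist constants $0<m\le M<\infty$ with $\sigma_d(A)\ge m$ and $\|A\|\le M$ for every $A\in\sK$. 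Here $\sigma_d(A)=1/\|A^{-1}\|$ yields the lower bound $\|Au\|\ge m\|u\|$.

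For part \eqref{i:varphi_1}, fix $\overline{u}\in P\R^d$ with a unit representative $u$ and take $A,B\in\sK$. Then $|\,\|Au\|-\|Bu\|\,|\le\|(A-B)u\|\le\|A-B\|$, and $\min(\|Au\|,\|Bu\|)\ge m$, so the above inequality gives
\[
|\varphi(A,\overline{u})-\varphi(B,\overline{u})|\le \frac{\|A-B\|}{m}.
\]
Thus $C_3:=1/m$ suffices.

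For part \eqref{i:varphi_2}, fix $A\in\sK$ and $\overline{u},\overline{v}\in P\R^d$. We choose unit representatives $u,v$ whose angle $\theta=\angle(u,v)$ lies in $[0,\pi/2]$ (replacing $v$ by $-v$ if necessary). The identity $\|u-v\|^2=2(1-\cos\theta)=4\sin^2(\theta/2)$ combined with $\sin\theta=2\sin(\theta/2)\cos(\theta/2)$ gives
\[
\|u-v\|=\frac{\sin\theta}{\cos(\theta/2)}\le\sqrt{2}\,\sin\theta=\sqrt{2}\,d(\overline{u},\overline{v}),
\]
since $\cos(\theta/2)\ge\cos(\pi/4)=1/\sqrt2$. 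Then $|\,\|Au\|-\|Av\|\,|\le\|A(u-v)\|\le M\|u-v\|$ and $\min(\|Au\|,\|Av\|)\ge m$, so
\[
|\varphi(A,\overline{u})-\varphi(A,\overline{v})|\le \frac{M\|u-v\|}{m}\le \frac{\sqrt{2}\,M}{m}\,d(\overline{u},\overline{v}),
\]
and we may take $C_4:=\sqrt{2}M/m$.

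There is no real obstacle here; this is a bookkeeping lemma whose only subtle point is the passage between the Euclidean distance on unit representatives and the projective distance $d$, which is handled by the elementary trigonometric estimate above. Both constants $C_3,C_4$ depend only on the uniform bounds $m,M$ extracted from the compact set $\sK$.
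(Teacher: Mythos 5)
Your proof is correct and follows essentially the same route as the paper: a uniform lower bound $\|Aw\|\ge \sigma_d(A)\|w\|\ge m\|w\|$ from compactness, the elementary estimate $|\log a-\log b|\le|a-b|/\min(a,b)$ (the paper's $\log x\le x-1$ in disguise), and the trigonometric bound $\|u-v\|\le\sqrt{2}\,d(\overline{u},\overline{v})$ for unit representatives at angle at most $\pi/2$. The constants you obtain, $C_3=1/m$ and $C_4=\sqrt{2}M/m$, match the paper's $\max_{A\in\sK}\|A^{-1}\|$ and $\sqrt{2}\max_{A\in\sK}\|A^{-1}\|\,\|A\|$ up to the harmless replacement of $\max\|A\|\,\|A^{-1}\|$ by the product of separate maxima.
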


\begin{proof}
Let 
$$
C_3 := \max_{A\in \sK} \|A^{-1}\| , \quad 
C_4 := \sqrt{2}\max_{A\in \sK} \|A^{-1}\|\,\|A\| .
$$
Let $A$, $B$ be matrices in $\sK$ and $u$, $v$ be unit vectors in $\R^d$.
If $\varphi(A,\overline{u}) \geq \varphi(B,\overline{u})$ then we may estimate
$$
\left|\varphi(A,\overline{u}) - \varphi(B,\overline{u})\right| = \log \frac{\|Au\|}{\|Bu\|} 
\leq \frac{\|Au\|}{\|Bu\|} - 1 \leq \frac{\|Au-Bu\|}{\|Bu\|}  \leq C_3 \|A-B\|,
$$
and if $\varphi(B,\overline{u}) \geq \varphi(A,\overline{u})$ the same result holds with a similar derivation. 
This proves part (\ref{i:varphi_1}).

Let $\alpha := \angle \left(\overline{u}, \overline{v}\right) \in [0, \pi/2]$,
so $d \left(\overline{u}, \overline{v}\right) = \sin \alpha$ and 
$$
\| u - v \| = \sqrt{2 - 2\cos \alpha} = \frac{\sqrt{2}}{\sqrt{1+\cos \alpha}} \, \sin \alpha
\le \sqrt{2} \, d \left(\overline{u}, \overline{v}\right).
$$
If $\varphi(A,\overline{u}) \geq \varphi(A,\overline{v})$ then we may estimate
$$
\left|\varphi(A,\overline{u}) - \varphi(A,\overline{v})\right| 
=   \log \frac{\|Au\|}{\|Av\|} 
\le \frac{\|Au\|}{\|Av\|} - 1 
\le \frac{\|A(u-v)\|}{\|Av\|} 
\le \|A\| \, \|A^{-1}\| \, \|u-v\|
\le  C_4  d \left(\overline{u}, \overline{v}\right).
$$
If $\varphi(A,\overline{v}) \geq \varphi(A,\overline{u})$ the same result holds with a similar derivation. 
\end{proof}

\subsection{Lower Barabanov functions}

The following key result gives us an analogue of the Barabanov norm -- introduced in \cite{Ba} for the study of the upper spectral radius -- for the lower spectral radius in the presence of $1$-domination. It is closely related to the \emph{Ma\~n\'e lemma} used by T.~Bousch and J.~Mairesse in \cite{BM} to study both the upper and the lower spectral radii of sets of matrices satisfying a positivity condition (see also  \cite[Lemme A]{Bou} for a related result) and to the concave `antinorm' used by N.~Guglielmi and V.~Yu.~Protasov in their algorithm for the computation of the lower spectral radius in the presence of an invariant convex cone \cite{GP}. Unlike Guglielmi and Protasov's constructions we make no use whatsoever of convexity properties. The version which we present here extends a result for $\GL_2(\R)$-matrices which was used previously by the first named author together with M.~Rams \cite{BRams}.


\begin{proposition}\label{pr:barabanov}
Suppose that $\sK \in \GL_d(\R)$ is $1$-dominated, and let $\cC$ be an associated $1$-multicone. Then for each nonempty compact set $\sB \subseteq \sK$ we may find a continuous function $\psi_{\sB} \colon \cC \to \mathbb{R}$ such that for every $u \in \cC$ and $t \in \R\setminus\{0\}$ we have:
\begin{equation}\label{eq:barabanov_main}
\psi_{\sB}(u) + \log\subrad(\sB) =  \min_{B \in \sB} \psi_{\sB}(Bu)
\end{equation}
and 
\begin{equation}\label{eq:barabanov_homog}
\psi_{\sB}(tu) = \psi_{\sB}(u) + \log |t|. 
\end{equation}
Moreover, there exists $C_5>0$ depending only on $\sK$ and $\cC$ such that the restriction of $\psi_{\sB}$ to unit vectors is $C_5$-Lipschitz:
\begin{equation}\label{eq:barabanov_Lip}
u, v \in \cC \text{ unit vectors } \Rightarrow \ 
| \psi_{\sB}(u) - \psi_{\sB}(v) | \leq C_5 d(\overline{u}, \overline{v}) .
\end{equation}
\end{proposition}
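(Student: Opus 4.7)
The plan is to construct $\psi_\sB$ in the split form
$\psi_\sB(u) := \phi_\sB(\overline{u}) + \log\|u\|$
for a continuous $\phi_\sB\colon \overline{\cC}\to \R$ on the compact projective multicone. Under this ansatz, \eqref{eq:barabanov_homog} is automatic, the Lipschitz estimate \eqref{eq:barabanov_Lip} reduces to a uniform Lipschitz bound on $\phi_\sB$ with respect to $d$, and -- using the cocycle notation \eqref{eq:def_varphi} -- the functional equation \eqref{eq:barabanov_main} becomes the requirement that $\phi_\sB$ be a fixed point of the non-linear min-plus operator
\[
(Tf)(\overline{u}) := \min_{B \in \sB}\bigl[f(\overline{Bu}) + \varphi(B,\overline{u})\bigr] - \log \subrad(\sB)
\]
acting on $C(\overline{\cC})$.

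The crucial analytic input is that $T$ preserves equi-Lipschitz classes with respect to the adapted metric $d_\infty$ of Lemma~\ref{le:metric}. Estimating $|Tf(\overline{u}) - Tf(\overline{v})|$ by applying at the larger value the minimizer of the other, the contraction \eqref{eq:contraction} handles the $f$-term and Lemma~\ref{le:varphi}(\ref{i:varphi_2}) combined with $d \leq d_\infty$ handles the $\varphi$-term, yielding $\mathrm{Lip}_{d_\infty}(Tf) \leq \theta\,\mathrm{Lip}_{d_\infty}(f) + C_4$. Setting $L_0 := C_4/(1-\theta)$ and fixing any base point $\overline{u}_* \in \overline{\cC}$, the convex set
\[
\mathcal{K} := \bigl\{ f \in C(\overline{\cC}) \colon \mathrm{Lip}_{d_\infty}(f) \leq L_0,\ f(\overline{u}_*) = 0 \bigr\}
\]
is therefore invariant under the normalised operator $\tilde T f := Tf - (Tf)(\overline{u}_*)$. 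Since $T$ is a minimum of translates and hence non-expansive in the sup norm, $\tilde T$ is continuous; since $\mathcal{K}$ is compact in $C(\overline{\cC})$ by Arzelà--Ascoli, Schauder's fixed point theorem produces $\phi_\sB \in \mathcal{K}$ satisfying $T\phi_\sB = \phi_\sB + \delta$ for the constant $\delta := (T\phi_\sB)(\overline{u}_*)$.

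The main obstacle I expect is to upgrade this to a true fixed point by showing $\delta = 0$. Writing $T_0 f := \min_{B \in \sB}(f\circ \bar B + \varphi(B,\cdot))$ so that $T = T_0 - \log\subrad(\sB)\cdot\mathbf{1}$, iterating gives
\[
\phi_\sB(\overline{u}) + n\bigl(\delta + \log\subrad(\sB)\bigr) = T_0^n \phi_\sB(\overline{u}) = \min_{B_i \in \sB}\Bigl[\phi_\sB\bigl(\overline{B_n\cdots B_1 u}\bigr) + \log\frac{\|B_n\cdots B_1 u\|}{\|u\|}\Bigr].
\]
A lower bound on the right-hand side of the form $\min \phi_\sB + \log\kappa + n\log\subrad(\sB)$ follows from Lemma~\ref{le:cone-vector} and the elementary inequality $\|B_n\cdots B_1\| \geq \subrad(\sB)^n$ provided by \eqref{eq:subrad-inf}. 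For the matching upper bound, I would select (for each $\epsilon > 0$ and infinitely many $n$) a sequence $B^*_1,\ldots,B^*_n \in \sB$ with $\|B^*_n\cdots B^*_1\|^{1/n} \leq \subrad(\sB) + \epsilon$, as provided by the definition of the lower spectral radius, and evaluate the right-hand side at this specific sequence. Dividing by $n$ and then letting $n \to \infty$ and $\epsilon \to 0$ forces $\delta = 0$, so that $T\phi_\sB = \phi_\sB$. Finally, setting $\psi_\sB(u) := \phi_\sB(\overline{u}) + \log\|u\|$ gives a continuous function satisfying \eqref{eq:barabanov_main} and \eqref{eq:barabanov_homog}; the Lipschitz bound \eqref{eq:barabanov_Lip} follows from $\mathrm{Lip}_{d_\infty}(\phi_\sB) \leq L_0$ together with \eqref{eq:lip_equiv} with constant $C_5 := L_0 C_2 = C_2 C_4/(1-\theta)$, which depends only on $\sK$ and $\cC$ as required.
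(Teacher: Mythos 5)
Your proposal is correct and follows essentially the same route as the paper: the split ansatz $\psi_\sB(u)=\phi_\sB(\overline{u})+\log\|u\|$, the min-plus operator preserving the class of $C_4/(1-\theta)$-Lipschitz functions for the adapted metric $d_\infty$, compactness via Arzel\`a--Ascoli, a fixed point modulo an additive constant, and identification of that constant by iterating the functional equation and invoking Lemma~\ref{le:cone-vector} together with the characterisation \eqref{eq:subrad-inf}. The only differences are cosmetic: you normalise at a base point and apply Schauder, whereas the paper passes to the quotient $C(\overline{\cC};\R)\bmod\R$ and uses Leray--Schauder, and your two-sided bound forcing $\delta=0$ is just an unpacking of the paper's limit argument showing $\beta=\log\subrad(\sB)$.
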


We call $\psi_{\sB}$ a \emph{lower Barabanov function}.

\begin{proof}
Lemma~\ref{le:metric} constructs an adapted metric $d_\infty$ on $\overline{\cC}$, with associated constant $\theta \in (0,1)$. Using Lemma~\ref{le:varphi} we may find a constant $C_4>0$ such that for all $B \in \sK$ the function $\varphi(B, \mathord{\cdot})$ is $C_4$-Lipschitz continuous with respect to the metric $d$; since by \eqref{eq:lip_equiv} $d \leq d_\infty$, these functions are also $C_4$-Lipschitz continuous on $\overline{\cC}$ with respect to the metric $d_\infty$. Let $W$ denote the set of all $f \colon \overline{\cC} \to\mathbb{R}$ which have Lipschitz constant less than or equal to $C_4/(1-\theta)$ with respect to this metric.  Clearly every $f \in W$ is $C_5$-Lipschitz with respect to the metric $d$ for some uniform constant $C_5>0$ which depends only on $\sK$. For each $f \in W$ let us define $Lf \colon \overline{\cC} \to \mathbb{R}$ by 
\[
(Lf)(\overline{u}):=\min_{B \in \sB} \left[ f(\overline{Bu})+\varphi(B,\overline{u})\right].
\] 
We give $W$ the metric induced by the supremum norm on $C(\overline{\cC};\R)$. We claim that $L$ acts continuously on $W$.

Let us first show that $L$ preserves $W$. If $f \in W$ and $\overline{u},\overline{v} \in \overline{\cC}$, choose $B_0 \in \sB$ such that $(Lf)(\overline{v})=f(\overline{B_0v})+\varphi(B_0,\overline{v})$. We have
\begin{align*}(Lf)(\overline{u})-(Lf)(\overline{v})&= \min_{B \in \sB} \left[f\left(\overline{Bu}\right)+\varphi\left(B,\overline{u}\right)\right] - f\left(\overline{B_0v}\right)-\varphi\left(B_0,\overline{v}\right)\\
&\leq f\left(\overline{B_0u}\right)-f\left(\overline{B_0v}\right) + \varphi\left(B_0,\overline{u}\right)-\varphi\left(B_0,\overline{v}\right)\\
&\leq \left(\frac{C_4}{1-\theta}\right)d_\infty\left(\overline{B_0u},\overline{B_0v}\right) + C_4d_\infty\left(\overline{u},\overline{v}\right)\\
& \leq \left(\frac{C_4}{1-\theta}\right)d_\infty\left(\overline{u},\overline{v}\right)\end{align*}
using the inequality $d_\infty\left(\overline{B_0u},\overline{B_0v}\right)\leq \theta d_\infty\left(\overline{u},\overline{v}\right)$,
and since $\overline{u},\overline{v} \in\overline{\cC}$ are arbitrary we have shown that $Lf \in W$. Let us now show that $L \colon W \to W$ is continuous. Let $f,g \in W$ and $\overline{u} \in \overline{\cC}$. Choosing similarly $B_0 \in \sB$ such that $(Lg)(\overline{u})=g(\overline{B_0u})+\varphi\left(B_0,\overline{u}\right)$ we may estimate
\begin{align*}(Lf)\left(\overline{u}\right)-(Lg)\left(\overline{u}\right) &=  \min_{B \in \sB} \left[f\left(\overline{Bu}\right)+\varphi\left(B,\overline{u}\right)\right] - g\left(\overline{B_0u}\right)-\varphi\left(B_0,\overline{u}\right)\\
&\leq f\left(\overline{B_0u}\right)-g\left(\overline{B_0v}\right)\leq  |f-g|_\infty.\end{align*}
Since $\overline{u} \in \overline{\cC}$ is arbitrary, by symmetry we obtain $|Lf-Lg|_\infty \leq |f-g|_\infty$ so that $L \colon W \to W$ is continuous as claimed.

Let $\hat{W}$ denote the set of equivalence classes of elements of $W$ modulo the addition of a real constant. It follows from the  Arzel\`a-Ascoli theorem that $\hat{W}$ is a compact subset of the Banach space $C(\overline{\cC};\R) \mod \R$, and it is not difficult to see that the function $L \colon W \to W$ induces a well-defined continuous transformation $\hat{L} \colon \hat{W} \to \hat{W}$. It follows by the Leray-Schauder fixed point theorem that there exists $\hat f_0 \in \hat{W}$ such that $\hat{L}\hat{f}_0=\hat{f}_0$, and consequently there exist $f_0 \in W$ and $\beta \in \R$ such that $Lf_0 = f_0 + \beta$.
Define
\[
\psi_{\sB} (u) := f_0( \overline{u} ) + \log \|u\| \quad \text{for all } u \in \cC.
\]
Note that $\psi_{\sB}$ has the desired properties \eqref{eq:barabanov_homog}, \eqref{eq:barabanov_Lip}.
Moreover,  for every $u \in \cC$, we have
\begin{equation}\label{eq:temp_1}
\psi_{\sB}(u) + \beta =  \min_{B \in \sB} \psi_{\sB}(Bu)
\end{equation}
and 
\begin{equation}\label{eq:temp_2}
\log\|u\| - C \le \psi_{\sB}(u) \le \log\|u\| + C,
\end{equation}
where $C:= |f_0|_\infty$.

To complete the proof of the lemma let us show that $\beta = \log\subrad(\sA)$.
Take a unit vector $u \in \cC$ 
Applying \eqref{eq:temp_1} recursively we obtain
\[
\min_{B_1,\ldots,B_n \in \sB} \psi_\sB (B_n\cdots B_1 u) = \psi_{\sB}(u) + n \beta.
\]
On the other hand, by \eqref{eq:temp_2} and Lemma~\ref{le:cone-vector},
for every $B_1,\ldots,B_n \in \sB$ we have
\[
\psi_\sB (B_n\cdots B_1 u) - C \le 
\log \| B_n \cdots B_1\|  \le \psi_\sB (B_n\cdots B_1 u) + C + \log \kappa^{-1} \, .
\]
Dividing by $n$, taking minimum over $B_1,\ldots,B_n \in \sB$, and making $n \to \infty$,
it follows that $\beta=\log\subrad(\sB)$, as claimed. The proof is complete.
\end{proof}

\begin{remark}
An almost identical construction can be applied to yield an `upper Barabanov function',
i.e., a function with the same properties as the lower Barabanov function $\psi_\sB$,
except that in \eqref{eq:barabanov_main} we replace $\subrad(\sB)$ with $\jrad(\sB)$ and $\min$ with $\max$.
\end{remark}

\subsection{Proof of Proposition~\ref{pr:lipschitz} and derivation of Theorem~\ref{th:lipschitz}}

\begin{proof}[of Proposition~\ref{pr:lipschitz}] 
Let $\sK \in \cK(\GL_d(\R))$ be $1$-dominated. 
Let $\cC$ be an associated $1$-multicone.
Let $\sA$, $\sB \subseteq \sK$ be nonempty compact sets, and let $\psi_{\sB} \colon \cC \to \R$ be a lower Barabanov function, as given by Proposition~\ref{pr:barabanov}.  

\begin{claim}
There exists $K>0$ depending only on $\sK$ and $\cC$ such that for any $A \in \sA$ and $u \in \cC$,
\begin{equation}\label{eq:lip4}
\psi_{\sB}(A u) \ge \psi_{\sB}(u) + \log \subrad(\sA) - K d_H(\sA,\sB) .
\end{equation}
\end{claim}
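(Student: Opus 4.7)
The approach is a two-step bootstrap. I first prove a weaker form of the inequality with $\log\subrad(\sB)$ replacing $\log\subrad(\sA)$, and then separately establish the one-sided comparison $\log\subrad(\sB) \ge \log\subrad(\sA) - O(d_H(\sA,\sB))$ so as to convert the weaker form into the claim.

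For the weaker form, fix $A \in \sA$ and choose $B \in \sB$ with $\|A-B\| \le \epsilon := d_H(\sA,\sB)$, which exists by definition of the Hausdorff metric. The Barabanov equation \eqref{eq:barabanov_main} for $\sB$ gives $\psi_{\sB}(Bu) \ge \psi_{\sB}(u) + \log\subrad(\sB)$, so it suffices to bound $|\psi_{\sB}(Au)-\psi_{\sB}(Bu)|$ by $K_1\epsilon$ for some constant $K_1$ depending only on $\sK$ and $\cC$. Using the homogeneity \eqref{eq:barabanov_homog}, I split this difference into a radial component equal to $\varphi(A,\overline u)-\varphi(B,\overline u)$ (controlled by Lemma~\ref{le:varphi}(\ref{i:varphi_1})) and a projective component controlled by \eqref{eq:barabanov_Lip} as $C_5 \, d(\overline{Au},\overline{Bu})$. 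The identity $Au \wed Bu = Au \wed (B-A)u$ yields $d(\overline{Au},\overline{Bu}) \le \epsilon \|u\|/\|Bu\| \le \epsilon/\sigma_{\min}$, where $\sigma_{\min} := \min_{K\in\sK}\sigma_d(K) > 0$ by compactness of $\sK$ in $\GL_d(\R)$. This completes the weaker form.

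For the comparison step, I use that $\sA \subseteq \sK$ is also $1$-dominated with the same multicone $\cC$, hence admits a lower Barabanov function $\psi_{\sA}$ with the same regularity constants by Proposition~\ref{pr:barabanov}. The symmetric version of the weaker form (with $\psi_{\sA}$ in place of $\psi_{\sB}$ and roles reversed) gives $\psi_{\sA}(Bv) \ge \psi_{\sA}(v) + \log\subrad(\sA) - K_1\epsilon$ for all $B \in \sB$ and $v \in \cC$. Iterating along any product $B_1,\dots,B_n \in \sB$, which is possible since $\sK\cC \subset \cC$, gives
\[
\psi_{\sA}(B_n\cdots B_1 u) \ge \psi_{\sA}(u) + n\log\subrad(\sA) - nK_1\epsilon.
\]
Combining with the uniform comparison $|\psi_{\sA}(w) - \log\|w\|| \le C$ obtained in the proof of Proposition~\ref{pr:barabanov} together with submultiplicativity $\|B_n\cdots B_1 u\| \le \|B_n\cdots B_1\|\,\|u\|$, dividing by $n$, taking the infimum over products, and letting $n \to \infty$ using the identity \eqref{eq:subrad-inf} (where the infimum coincides with the limit), I deduce $\log\subrad(\sB) \ge \log\subrad(\sA) - K_1\epsilon$. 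Substituting this into the weaker form yields the claim with $K := 2K_1$.

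The main conceptual obstacle is precisely this discrepancy between $\log\subrad(\sA)$ on the right-hand side of the claim and the $\log\subrad(\sB)$ that emerges naturally from the Barabanov identity for $\psi_{\sB}$. The role of the comparison step is to prove one direction of the proposition's Lipschitz bound in advance via the symmetric construction and iteration, thereby bridging this gap without circular reasoning; note that the reverse direction of the proposition then follows by minimising the established claim over $A \in \sA$ and combining with the Lipschitz estimate already used in the weaker form.
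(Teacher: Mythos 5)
Your proposal is correct, but the extra machinery in your second step is forced only by what is almost certainly a typo in the statement: the inequality that the paper actually proves, and later uses, has $\log \subrad(\sB)$ rather than $\log \subrad(\sA)$ on the right-hand side of \eqref{eq:lip4}. The paper's own proof is exactly your ``weaker form'': choose $B \in \sB$ with $\|A-B\| \le d_H(\sA,\sB)$, compare $\psi_{\sB}(Au)$ with $\psi_{\sB}(Bu)$ using \eqref{eq:barabanov_homog}, \eqref{eq:barabanov_Lip}, Lemma~\ref{le:varphi} and the wedge estimate, and then invoke \eqref{eq:barabanov_main} --- which yields $\log\subrad(\sB)$, not $\log\subrad(\sA)$. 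That this is the intended form is confirmed by the way the claim is used afterwards: iterating \eqref{eq:lip4} over products of elements of $\sA$, dividing by $n$ and letting $n \to \infty$ gives the nontrivial half-inequality $\log\subrad(\sA) \ge \log\subrad(\sB) - K d_H(\sA,\sB)$ only when the bound involves $\subrad(\sB)$; with $\subrad(\sA)$ it is vacuous. Your second (comparison) step --- invoking Proposition~\ref{pr:barabanov} for $\sA$, proving the symmetric one-step estimate, iterating it along $\sB$-products inside $\cC$, and passing to the limit via $|\psi_{\sA}(w) - \log\|w\|| \le C$ and \eqref{eq:subrad-inf} --- is sound, and it is essentially the ``symmetrical argument'' with which the paper closes the proof of Proposition~\ref{pr:lipschitz}; you have simply carried it out in advance in order to trade $\subrad(\sB)$ for $\subrad(\sA)$ in the literal statement, at the cost of doubling the constant. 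So your argument does prove the statement as written and correctly diagnoses why it cannot follow from \eqref{eq:barabanov_main} alone, whereas the paper's intended claim is the $\subrad(\sB)$ version, for which your first paragraph by itself is the complete proof. One small caveat: your closing remark that the remaining direction of the proposition follows ``by minimising the established claim over $A \in \sA$'' does not work for the literal ($\subrad(\sA)$) version, which yields nothing upon iteration; that direction is obtained by iterating the $\subrad(\sB)$ form over $\sA$-products, exactly as in the paper.
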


\begin{proof}[of the claim]
Given $A \in \sA$, choose $B \in \sB$ such that ${\|A-B\|} \leq d_H(\sA,\sB)$. 
Recall the definition \eqref{eq:def_varphi} of the function $\varphi$.
We have
\begin{equation}\label{eq:exterior}
d \left( \overline{Au}, \overline{Bu}\right) \le
\frac{\|Au\wedge Bu\|}{\|Au\|\mathord{\cdot}\|Bu\|}=\frac{\|Au \wedge (B-A)u\|}{\|Au\|\mathord{\cdot}\|Bu\|} \leq \frac{\|(B-A)u\|}{\|Bu\|}\leq C_6 \|A-B\|,
\end{equation}
where $C_6:=\max\{\|B^{-1}\|\colon B \in \sK\}$.
We then estimate:
\begin{align*}
|\psi_{\sB}(Au)& - \psi_{\sB}(Bu)| && \\
&\le \left| \psi_{\sB}\left(\tfrac{Au}{\|Au\|}\right) - \psi_{\sB}\left(\tfrac{Bu}{\|Bu\|}\right)\right|
+ \left| \varphi(A, \overline{u}) - \varphi(B, \overline{u}) \right|
\quad \text{(by \eqref{eq:barabanov_homog})} \\
&\le C_5 d \big( \overline{Au}, \overline{Bu} \big) + C_3 \|A-B\| 
\quad \text{(by \eqref{eq:barabanov_Lip} and Lemma~\ref{le:varphi}(\ref{i:varphi_1})}) \\
&\le (C_5 C_6 + C_3) \|A-B\| 
\qquad \text{(by \eqref{eq:exterior}.)} 
\end{align*}
So, letting $K:=C_5 C_6 + C_3$,
$$
\psi_{\sB}(A u) 
\ge \psi_{\sB}(B u) - K \|A-B\| 
\ge \psi_{\sB}(u) + \log \subrad(\sA) - K d_H(\sA,\sB) ,
$$
where in the last step we have used the main property \eqref{eq:barabanov_main}.
This proves the claim.
\end{proof}

Let us now fix $u\in \cC$.
Given $A_1,\ldots,A_n \in \sA$,
by iterating \eqref{eq:lip4} we obtain 
$$
\psi_{\sB} (A_n \cdots A_1 u) \ge \psi_{\sB}(u) + n \left[\log \subrad(\sA) - K d_H(\sA,\sB)\right].
$$
By the homogeneity property \eqref{eq:barabanov_homog},
we have
$$
\log \| A_n \cdots A_1 u \| \ge -C + \psi_{\sB} (A_n \cdots A_1 u), 
$$
where $C$ is the supremum of $|\psi_{\sB}|$ over unit vectors on $\cC$.
It follows that 
$$
\min_{A_1,\ldots,A_n \in \sA} \log \| A_n \cdots A_1 u \| \ge 
-C + \psi_{\sB}(u) + n \left[\log \subrad(\sA) - K d_H(\sA,\sB)\right].
$$
Dividing both sides by $n$ and letting $n \to \infty$ yields
$$
\log \subrad(\sA) \geq \log \subrad(\sB)-Kd_H(\sA,\sB).
$$

By a symmetrical argument we may also derive the reverse inequality $\log \subrad(\sB) \geq \log \subrad(\sA)-Kd_H(\sA,\sB)$, and this completes the proof of Proposition~\ref{pr:lipschitz}.
\end{proof}

We may now derive Theorem~\ref{th:lipschitz} from Proposition~\ref{pr:lipschitz}. Suppose that $\sA \in \cK(\GL_d(\R))$ is $1$-dominated: we wish to show that $\subrad$ is Lipschitz continuous on a neighbourhood of $\sA$. Since $1$-domination is an open property in $\cK(\GL_d(\R))$, we may find a constant $\epsilon>0$ such that if $\sB \in \cK(M_d(\R))$ and $d_H(\sA,\sB) \leq \epsilon$ then $\sB \in \cK(\GL_d(\R))$ and $\sB$ is $1$-dominated. Define
$$\sK:=\left\{A \in M_d(\R) \colon \exists\,B \in \sA \text{ such that } \|A-B\|\leq \epsilon\right\}.$$
Clearly $d_H(\sA,\sK)=\epsilon$ so that $\sK \in \cK(\GL_d(\R))$ and $\sK$ is $1$-dominated.

If $\sB_1$ and $\sB_2$ belong to the closed $\epsilon$-ball about $\sA$ in $\cK(\GL_d(\R))$ then it is clear from the definition of the Hausdorff metric that $\sB_1,\sB_2 \subseteq \sK$, and so by Proposition~\ref{pr:lipschitz} we have
\[\left|\log \subrad\left(\sB_1\right)-\log\subrad\left(\sB_2\right)\right| \leq Kd_H(\sB_1,\sB_1)\]
where $K>0$ depends only on $\sK$. Using the elementary real inequality $|e^x-e^y|\leq |x-y|e^{\max\{x,y\}}$ it follows that 
\[\left|\subrad\left(\sB_1\right)-\subrad\left(\sB_2\right)\right| \leq K\cdot\left(\sup\{\|A\|\colon A \in \sK\}\right) \cdot d_H(\sB_1,\sB_1).\]
We conclude that $\subrad$ is uniformly Lipschitz continuous on the closed $\epsilon$-neighbourhood of $\sA$, and this completes the derivation of Theorem~\ref{th:lipschitz}.

\section{Accessibility lemmas}\label{se:access}

As was indicated in the introduction, the proofs of both Theorem~\ref{th:GL2+} and Theorem~\ref{th:formula-GLd} rely on a mechanism whereby we transport a nonzero vector from one subspace of $\R^d$ to another by the action of a product of matrices each of which is close to the given set $\sA$. The requisite tools for this process are developed in this section.

\subsection{Two-dimensional accessibility}

In this subsection we wish to prove the following lemma which is used in the proof of Theorem~\ref{th:GL2+}, and from which we derive a higher-dimensional result which is used in the proof of Theorem~\ref{th:formula-GLd}. 

\begin{lemma}\label{le:access_2}
For all $\delta_1>0$ and $C>1$ there exist constants $c>0$ and $\lambda>1$ with the following property: 
if $A_1$, \ldots, $A_n$ are matrices in $\GL_2^+(\R)$ 
with $\|A_1\| \, \|A_1^{-1}\| \le C$
and
$v$, $w$ are nonzero vectors in $\R^2$ such that
\begin{equation}\label{eq:condition_v_w}
\frac{\|A_n \cdots A_1 v\|}{\|A_n \cdots A_1 w\|} \cdot \frac{\|w\|}{\|v\|} < c^2\lambda^{2n} ,
\end{equation}
then there exists ${\theta_1} \in [-\delta_1,\delta_1]$ such that the two vectors
$$
R_{\theta_1} A_n R_{\theta_1} A_{n-1} \cdots R_{\theta_1} A_1 v
\quad \text{and} \quad 
A_n A_{n-1} \cdots A_1 w
$$ 
are proportional to one another.
\end{lemma}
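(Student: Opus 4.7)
The plan is to apply an intermediate-value argument to the continuous map $F\colon [-\delta_1,\delta_1] \to P\R^2$ defined by $F(\theta) := \overline{P_n(\theta) v}$, where $P_n(\theta) := R_\theta A_n R_\theta A_{n-1} \cdots R_\theta A_1$. Any $\theta_1$ with $F(\theta_1) = \overline{A_n\cdots A_1 w}$ yields the desired conclusion. Lifting $F$ to a continuous real-valued angle $\tilde F \colon [-\delta_1,\delta_1] \to \R$, we seek $\theta_1$ so that $\tilde F(\theta_1)$ agrees with some $\pi$-lift of the angle of $\overline{A_n \cdots A_1 w}$.

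The first step is monotonicity. Differentiating $P_n(\theta)$ term-by-term, using $\det R_\theta = 1$ together with the identity $\det(u, Ju) = \|u\|^2$ for $J := \tfrac{dR_\theta}{d\theta}\big|_{\theta=0}$, a direct computation gives
\[
\tilde F'(\theta) \;=\; \sum_{k=1}^n \Bigl(\prod_{j=k+1}^n \det A_j\Bigr)\cdot \frac{\|R_\theta A_k R_\theta A_{k-1} \cdots R_\theta A_1\, v\|^2}{\|P_n(\theta)\, v\|^2}.
\]
Since every $A_j \in \GL_2^+(\R)$ has strictly positive determinant, every summand is strictly positive, so $\tilde F$ is strictly increasing on $[-\delta_1,\delta_1]$; moreover the $k=n$ summand equals $1$, giving the uniform bound $\tilde F'(\theta) \geq 1$.

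The remaining task is to show that the arc $F([-\delta_1, \delta_1])$ meets the target direction. Since consecutive lifts of the target angle differ by $\pi$, it suffices to have $L := \tilde F(\delta_1) - \tilde F(-\delta_1) \geq \pi$. The constants $c$ and $\lambda$ should be chosen so that the hypothesis \eqref{eq:condition_v_w} forces either this sweep bound or direct coverage. The idea is that the individual summands in the formula above can be as large as $\sigma_1/\sigma_2$ of the tail product $A_n \cdots A_{k+1}$, so whenever the product $M := A_n\cdots A_1$ is eccentric one has $\tilde F'$ exponentially large and $L\gg \pi$ even for small $\delta_1$; while in the non-eccentric regime the hypothesis $\|Mv\|/\|Mw\|\cdot\|w\|/\|v\| < c^2\lambda^{2n}$ becomes so restrictive that it forces the configuration of $v,w$ into a regime where $\overline{Mv}$ and $\overline{Mw}$ are close, placing the target within the guaranteed sweep of length $\ge 2\delta_1$.

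The main obstacle is making this dichotomy quantitative. One must simultaneously (i) convert the norm-ratio bound into an effective lower bound on $\tilde F'$ via the singular-value structure of the tail products, and (ii) bound the angular distance from $\tilde F(0)$ to the target via the identity $\sin|\alpha^*| = (\det M)\|v\wedge w\|/(\|Mv\|\|Mw\|)$, using the condition $\|A_1\|\|A_1^{-1}\|\leq C$ to control the $k=1$ summand and the innermost step of the product. The precise constants $c$ and $\lambda$ are then dictated by this balance: $\lambda>1$ should slightly exceed the critical growth rate governing the tail estimates, while $c$ is calibrated as an appropriate power of $\lambda^{-1}$ so that at small $n$ the hypothesis defeats itself and at large $n$ the amplified sweep automatically covers a full $\pi$.
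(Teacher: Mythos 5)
Your setup is sound: the intermediate-value strategy on $\theta\mapsto\overline{R_\theta A_n\cdots R_\theta A_1 v}$ is legitimate, and your derivative formula is correct — each summand is $\det(Q_k)\|u_k\|^2/\|P_n(\theta)v\|^2$ with $Q_k$ the tail product and $u_k$ the partial image, so positivity of the determinants does give strict monotonicity with $\tilde F'\geq 1$ (this monotonicity is exactly what the paper exploits in Lemma~\ref{le:quick_fix} and again in \S\ref{se:GL2+}). But the proof stops where the lemma actually lives: the existence of constants $c>0$ and $\lambda>1$ for which the hypothesis \eqref{eq:condition_v_w} guarantees that the target direction is reached within $[-\delta_1,\delta_1]$ is precisely the quantitative content, and you defer it entirely ("the constants should be chosen so that\dots"). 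Worse, the heuristic you offer for closing it is wrong as stated: the bound $\sigma_1/\sigma_2$ of the tail is only an \emph{upper} bound on a summand, and when $v$ is aligned with the most-expanded direction of the partial products each summand is of order $\sigma_2(Q_k)/\sigma_1(Q_k)$, so an eccentric product $M=A_n\cdots A_1$ gives a \emph{bounded} sweep, not one of length $\geq\pi$ (take all $A_j$ equal to a fixed hyperbolic matrix and $v$ its expanding eigenvector). In that regime the crude angle bound $\sin\angle(Mv,Mw)\leq c^2\lambda^{2n}\,\sigma_1(M)/\sigma_2(M)$ obtained from \eqref{eq:condition_v_w} is useless, so the "non-eccentric versus eccentric" dichotomy does not close as described; the genuine argument has to track the alignment of $v$ and $w$ with the singular directions of all the partial products, not just the endpoints.

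For comparison, the paper does not prove this estimate from scratch either: it quotes Lemma~\ref{le:ABD} (Lemma C.2 of \cite{ABD}, proved there via the Hilbert projective metric) for the one-sided condition $\|A_n\cdots A_1v\|<c\lambda^n\|\wed^2 A_n\cdots A_1\|^{1/2}\|v\|$, reduces the two-sided hypothesis \eqref{eq:condition_v_w} to this by observing that $\beta_1\beta_2<c^2\lambda^{2n}$ forces $\beta_1<c\lambda^n$ or $\beta_2<c\lambda^n$ and applying the lemma to the inverse product in the second case (Corollary~\ref{co:ABD_bi}), and then removes the innermost rotation with Lemma~\ref{le:quick_fix}, which is where the hypothesis $\|A_1\|\,\|A_1^{-1}\|\leq C$ enters. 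Your proposal would be acceptable if you either cited such a steering lemma or supplied the cone/Hilbert-metric estimates yourself; as it stands, the decisive step is missing and the sketch offered in its place would fail.
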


%

To begin the proof of Lemma~\ref{le:access_2} we require the following result:
\begin{lemma}\label{le:ABD}
For all $\delta_0>0$ there exist constants $c>0$ and $\lambda>1$ with the following property: 
if $A_1$, \ldots, $A_n$ are matrices in $\GL_2^+(\R)$ and $v$ is a vector in $\R^2$ such that 
\begin{equation}\label{eq:condition_ABD}
\|A_n \cdots A_1 v\| < c\lambda^n \|\wed^2 A_n \cdots A_1 \|^{1/2} \, \|v\|,
\end{equation}
then for any nonzero $u \in \R^2$ there exists $\theta_0 \in [-\delta_0,\delta_0]$ such that the vectors
$$
R_{\theta_0} A_n R_{\theta_0} A_{n-1} R_{\theta_0} \cdots R_{\theta_0} A_1 R_{\theta_0} v
\quad \text{and} \quad u
$$ 
are proportional to one another.
\end{lemma}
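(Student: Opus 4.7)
My approach is to parametrize the projective direction $\overline{F(\theta)} \in P\R^1$ by a continuous lift $\beta(\theta) \in \R$, where
\[
F(\theta) := R_\theta A_n R_\theta A_{n-1} R_\theta \cdots R_\theta A_1 R_\theta v,
\]
and to show that under the hypothesis \eqref{eq:condition_ABD} one has $\beta(\delta_0) - \beta(-\delta_0) \geq \pi$ provided $c > 0$ is small enough and $\lambda > 1$ is close enough to $1$, both depending only on $\delta_0$. Since $\overline{F}\colon [-\delta_0,\delta_0] \to P\R^1$ is continuous and $\beta$ will be monotonic (see below), this inequality forces $\overline{F}$ to surject onto $P\R^1$, yielding the desired $\theta_0$.

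The first step is to compute $\beta'(\theta)$. The classical fact that a matrix $M \in \GL_2^+(\R)$ acts on $P\R^1$ with projective derivative $\det M / \|Mu\|^2$ at the direction of a unit vector $u$, combined with the product rule applied to the blocks $R_\theta A_k$ in $F$, gives
\[
\beta'(\theta) = \sum_{k=0}^n \frac{\det(A_n \cdots A_{k+1})\,\|w_k(\theta)\|^2}{\|F(\theta)\|^2},
\]
where $w_k(\theta) := R_\theta A_k R_\theta \cdots R_\theta A_1 R_\theta v$ (so $w_n = F$) and the empty product of determinants equals $1$. Each summand is nonnegative since every $A_k$ has positive determinant, and the $k = n$ summand equals $1$; hence $\beta' \geq 1$ and $\beta$ is strictly increasing. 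Keeping only the $k = 0$ summand yields
\[
\beta(\delta_0) - \beta(-\delta_0) \;\geq\; D\|v\|^2 \int_{-\delta_0}^{\delta_0} \frac{d\theta}{\|F(\theta)\|^2}, \qquad D := \det(A_n\cdots A_1).
\]

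The hypothesis \eqref{eq:condition_ABD} is exactly the assertion that the integrand at $\theta = 0$ is at least $1/(c\lambda^n)^2$. In the base case $n = 1$ this quickly yields the required lower bound $\pi$: the polar decomposition $B(\theta) := R_\theta A_1 R_\theta = R_{\theta+\alpha}\Sigma R_{\theta+\beta}$ with $\Sigma = \mathrm{diag}(\sigma_1,\sigma_2)$ gives explicitly $\|F(\theta)\|^2/(D\|v\|^2) = \kappa \sin^2\psi(\theta) + \kappa^{-1}\cos^2\psi(\theta)$, where $\kappa = \sigma_1/\sigma_2$ is independent of $\theta$ and $\psi(\theta)$ is affine in $\theta$; the substitution $t = \tan\psi$ evaluates the integral in closed form as a difference of arctangents, which tends to $\pi$ as $\kappa \to \infty$. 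Since the hypothesis forces $\kappa > 1/(c\lambda)^2$, choosing $c$ small enough delivers the bound.

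The main obstacle will be handling general $n \geq 2$: because rotations do not commute with the $A_j$, the polar decomposition of $B(\theta) = R_\theta A_n R_\theta \cdots R_\theta A_1 R_\theta$ depends non-trivially on $\theta$, and both the condition number $\kappa(\theta) := \sigma_1(B(\theta))/\sigma_2(B(\theta))$ and the singular directions shift with $\theta$. The plan is to still write $\|F(\theta)\|^2/(D\|v\|^2) = \kappa(\theta)p(\theta)^2 + \kappa(\theta)^{-1}q(\theta)^2$, where $(p,q)$ are the coordinates of $v$ in the right singular basis of $B(\theta)$ and $p^2+q^2=\|v\|^2$, and then to show via a perturbation analysis of $B(\theta)$ that on $[-\delta_0,\delta_0]$ both $\kappa(\theta)$ and the singular directions stay comparable enough to their values at $\theta = 0$ for the integral to remain close to $\pi$. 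Reducing the general case to a quantitative version of the $n = 1$ calculation, and then choosing $\lambda > 1$ close enough to $1$ and $c$ small enough (depending only on $\delta_0$) so that the final estimate holds uniformly in $n$, is the technical heart of the argument.
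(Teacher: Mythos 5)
Your setup is sound: the formula for $\beta'(\theta)$ is correct, every summand is nonnegative because the determinants are positive, and (given this monotonicity) the lemma is indeed equivalent to showing that the total projective rotation of $F(\theta)$ over $[-\delta_0,\delta_0]$ is at least $\pi$. But beyond this reformulation there is a genuine gap. Even in your ``base case'' $n=1$ the displayed bound cannot work as written: the integrand $1/(\kappa\sin^2\psi+\kappa^{-1}\cos^2\psi)$ has full-period integral exactly $\pi$ and is strictly positive, so its integral over the swept $\psi$-interval of length $2\delta_0<\pi$ is \emph{strictly} less than $\pi$ for every $\kappa$; keeping only the $k=0$ summand can therefore never deliver the required $\pi$, and you must retain the other summands (for $n=1$ the outer rotation contributes an extra $2\delta_0$, which repairs the estimate). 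That slip is fixable. The serious problem is general $n$, which you defer to a ``perturbation analysis'' asserting that the condition number $\kappa(\theta)$ and the singular directions of $B(\theta)=R_\theta A_n R_\theta\cdots R_\theta A_1 R_\theta$ stay comparable to their values at $\theta=0$ throughout $[-\delta_0,\delta_0]$, uniformly in $n$. This is false: the singular data of an $n$-fold product with interspersed rotations is exponentially sensitive in $\theta$. For instance, with all $A_i$ equal to a fixed hyperbolic $A\in\GL_2^+(\R)$ whose eigenvalue ratio is close to $1$, there are $\theta\in[-\delta_0,\delta_0]$ for which $R_\theta A$ is elliptic, so $B(\theta)$ has bounded condition number along a subsequence of $n$ while $B(0)=A^n$ has condition number growing like $e^{cn}$ and fixed singular directions; no comparability with constants depending only on $\delta_0$ can hold. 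Moreover, the exponential slack $\lambda^n$ in \eqref{eq:condition_ABD} means that for large $n$ the hypothesis only bounds the $k=0$ integrand at $\theta=0$ below by $c^{-2}\lambda^{-2n}$, which is useless by itself; the entire content of the lemma is that the $n$ intermediate rotations compound to produce the missing rotation uniformly in $n$, and that is precisely the part your proposal does not prove.

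For comparison, the paper does not argue along these lines at all: it simply invokes \cite[Lemma C.2]{ABD}, where the statement is proved for determinant-one matrices using the Hilbert projective metric, and notes that the positive-determinant case follows by scaling. So your monotonicity/total-rotation strategy is a genuinely different route, and the derivative identity you derive is a reasonable starting point for it; but as it stands the proposal establishes only an equivalent reformulation plus (after the correction above) the case $n=1$, while the uniform-in-$n$ estimate that constitutes the lemma is missing and the specific plan sketched for it cannot be carried out.
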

In the case that the matrices $A_i$ have determinant $1$ a proof of this statement (which uses the Hilbert projective metric) is given in \cite[Lemma C.2]{ABD}. 
The case of arbitrary positive determinant follows immediately. We deduce:
\begin{corollary}\label{co:ABD_bi}
For all $\delta_0>0$ there exist constants $c>0$ and $\lambda>1$ with the following property: 
if $A_1$, \ldots, $A_n$ are matrices in $\GL_2^+(\R)$ and 
$v$, $w$ are nonzero vectors in $\R^2$ satisfying condition \eqref{eq:condition_v_w}
then there exists ${\theta_0} \in [-\delta_0,\delta_0]$ such that the two vectors
$$
R_{\theta_0} A_n R_{\theta_0} A_{n-1} R_{\theta_0} \cdots R_{\theta_0} A_1 R_{\theta_0} v
\quad \text{and} \quad 
A_n A_{n-1} \cdots A_1 w
$$ 
are proportional to one another.
\end{corollary}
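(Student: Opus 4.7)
The plan is to apply Lemma~\ref{le:ABD} with the target vector chosen as $u := A_n \cdots A_1 w$. Under the hypothesis of Lemma~\ref{le:ABD}, its conclusion is exactly the desired conclusion of Corollary~\ref{co:ABD_bi}. The entire content of the deduction therefore consists in showing that the Lemma's hypothesis, namely
\[
\|A_n\cdots A_1 v\| < c'\lambda'^n \, \|\wed^2(A_n\cdots A_1)\|^{1/2}\,\|v\| ,
\]
(with the constants $c', \lambda'$ furnished by Lemma~\ref{le:ABD} for the given $\delta_0$) can be extracted from the Corollary's weaker hypothesis \eqref{eq:condition_v_w} after a judicious choice of the new constants $c$ and $\lambda$.

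The degenerate case $v \parallel w$ is trivial, since then $A_n\cdots A_1 v \parallel A_n\cdots A_1 w$ and $\theta_0 = 0$ works; so assume $v,w$ linearly independent. Set $P := A_n\cdots A_1$ and let $\sigma_1(P) \ge \sigma_2(P)$ be its singular values, so that $\|\wed^2 P\|^{1/2} = \sqrt{|\det P|} = \sqrt{\sigma_1(P)\sigma_2(P)}$ and $\|P\| = \sigma_1(P)$. Using $\|Pw\|/\|w\| \le \sigma_1(P)$ together with \eqref{eq:condition_v_w}, one obtains
\[
\frac{\|Pv\|}{\|v\|} < c^2\lambda^{2n}\sigma_1(P) = c^2\lambda^{2n}\sqrt{\sigma_1(P)/\sigma_2(P)}\cdot \sqrt{|\det P|}.
\]
Hence the Lemma's hypothesis is verified directly --- provided the singular-value ratio is controlled, specifically provided that $\sqrt{\sigma_1(P)/\sigma_2(P)} \le c'\lambda'^n/(c^2\lambda^{2n})$. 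With $c$ taken small and $\lambda$ slightly smaller than $\lambda'$, this handles all products $P$ with moderate singular-value gap.

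In the complementary regime of highly unbalanced $P$, I would argue directly. The Corollary's hypothesis forces $\|Pw\|/\|w\|$ to be comparable to $\sigma_1(P)$, so $w$ is closely aligned with the dominant expanding singular direction of $P$; the same hypothesis then forces $v$ to lie near that same direction, and consequently $Pv$ and $Pw$ both lie in a small projective neighborhood of the dominant direction of the image. A first-order analysis in $\theta$ of the continuous curve $\theta \mapsto [R_\theta A_n R_\theta\cdots A_1 R_\theta v]$ in $\mathbb{R}P^1$ then suffices: its derivative at $\theta = 0$ is an explicit sum of terms $P_k^{-1} J P_k v$ (with $J$ the infinitesimal rotation and $P_k = A_k \cdots A_1$), which one checks is large enough relative to the small $\mathbb{R}P^1$-distance from $[Pv]$ to $[Pw]$ that the curve passes through $[Pw]$ within the allowed range $[-\delta_0, \delta_0]$.

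The main obstacle is making the second-case analysis fully quantitative: one must control the derivative of the projective curve at $\theta = 0$ uniformly from below in a manner compatible with the bounds imposed by \eqref{eq:condition_v_w}, and arrange the constants $c$ and $\lambda$ so that the two regimes tile together without leaving a gap. Once this is done, both cases together yield the existence of the required $\theta_0 \in [-\delta_0,\delta_0]$.
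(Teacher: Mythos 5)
Your first regime (moderate singular value gap) is fine, but the second regime contains both a false geometric claim and all of the real unproven content. The hypothesis \eqref{eq:condition_v_w} does \emph{not} force $w$ (nor $v$) to align with the expanding singular direction of $P := A_n\cdots A_1$: since $\lambda>1$ the threshold $c^2\lambda^{2n}$ is large, and whenever $\|Pw\|/\|w\|$ is comparable to $\sigma_1(P)$ the condition holds automatically for \emph{every} $v$ --- including $v$ close to the contracting singular direction, in which case $[Pv]$ and $[Pw]$ are far apart in the projective line, so the premise of your first-order analysis (``$Pv$ and $Pw$ both lie in a small projective neighborhood of the dominant image direction'') fails. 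Alternatively the condition can hold with $v$ and $w$ both near the \emph{contracting} direction, and then the directions of $Pv$ and $Pw$ are again essentially unconstrained. Moreover, even where your picture is accurate, the uniform quantitative lower bound on the sweep of the curve $\theta \mapsto [R_\theta A_n R_\theta \cdots R_\theta A_1 R_\theta v]$ that you defer as the ``main obstacle'' is precisely the content of Lemma~\ref{le:ABD} (that is, of \cite[Lemma C.2]{ABD}); your plan would amount to re-proving that lemma rather than deducing the corollary from it. So there is a genuine gap.

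The paper's deduction needs no case analysis on singular values, thanks to a symmetry trick your proposal misses: setting $u := A_n\cdots A_1 w$, the left-hand side of \eqref{eq:condition_v_w} factors exactly as $\beta_1\beta_2$, where
\[
\beta_1 = \frac{\|Pv\|}{\|\wed^2 P\|^{1/2}\,\|v\|}, \qquad
\beta_2 = \frac{\|P^{-1}u\|}{\|\wed^2 P^{-1}\|^{1/2}\,\|u\|},
\]
i.e.\ $\beta_1$ is the quantity appearing in the hypothesis of Lemma~\ref{le:ABD} for the forward product and the vector $v$, while $\beta_2$ is the same quantity for the sequence $A_n^{-1},\ldots,A_1^{-1}$ and the vector $u$ (one uses that $\|\wed^2 P\|^{1/2}\,\|\wed^2 P^{-1}\|^{1/2}=1$ in dimension $2$). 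Hence either $\beta_1 < c\lambda^n$ or $\beta_2 < c\lambda^n$. In the first case apply Lemma~\ref{le:ABD} forward with target $u$; in the second, apply it to the inverse sequence with target $v$, then invert the resulting proportionality and replace $\theta_0$ by $-\theta_0$. This gives the corollary with the same constants $c,\lambda$ as the lemma, covering in one stroke exactly the unbalanced regime on which your argument breaks down.
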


\begin{proof}
Given $\delta_0$,
let be $c$ and $\lambda$ be given by Lemma~\ref{le:ABD}.
Assume that $A_1$, \ldots, $A_n \in \GL_2^+(\R)$ and $v$, $w \in \R^2 \setminus \{0\}$
satisfy \eqref{eq:condition_v_w}.
Equivalently, we have  $\beta_1 \beta_2 < c^2 \lambda^{2n}$,
where $u := A_n A_{n-1} \cdots A_1 w$
and 
$$
\beta_1 := \frac{\|A_n \cdots A_1 v\|}{\|\wed^2 A_n \cdots A_1 \|^{1/2} \, \|v\|}
\quad \text{and} \quad 
\beta_2 := \frac{\|A_1^{-1} \cdots A_n^{-1} u\|}{\|\wed^2 A_1^{-1} \cdots A_n^{-1} \|^{1/2} \, \|u\|} \, .
$$
It follows that either $\beta_1 < c \lambda^n$ or $\beta_2 < c\lambda^n$ (or both).
If the first inequality holds then the desired conclusion follows directly from Lemma~\ref{le:ABD}.
If the second inequality holds, the lemma gives ${\theta_0} \in [-\delta_0,\delta_0]$ such that 
$$
R_{\theta_0} A_1^{-1} R_{\theta_0} A_2^{-1} R_{\theta_0} \cdots R_{\theta_0} A_n^{-1} R_{\theta_0} u
$$ 
is proportional to $v$,
and so replacing $\theta_0$ by $-\theta_0$ we obtain the desired conclusion.
\end{proof}

The fact that a rotation occurs both at the beginning and at the end of the matrix products in Lemma~\ref{le:ABD} and Corollary~\ref{co:ABD_bi} is somewhat inconvenient for our purposes. This is easily remedied by the following:
\begin{lemma}\label{le:quick_fix}
For all $\delta_1>0$ and $C>1$ there exists $\delta_0>0$ with the following property:
given matrices $A_1$, \ldots, $A_n \in \GL_2^+(\R)$ 
with $\|A_1\| \, \|A_1^{-1}\| \le C$, a vector $v \in \R^2$, and ${\theta_0} \in [-\delta_0,\delta_0]$,
there exists $\theta_1 \in [-\delta_1,\delta_1]$ such that the two vectors
$$
R_{\theta_0} A_n R_{\theta_0} A_{n-1} R_{\theta_0} \cdots R_{\theta_0} A_1 R_{\theta_0} v
\quad \text{and} \quad 
R_{\theta_1} A_n R_{\theta_1} A_{n-1} \cdots R_{\theta_1} A_1  v
$$ 
are proportional to one another.
\end{lemma}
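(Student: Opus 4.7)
The strategy is to transfer the problem to projective coordinates and invoke the implicit function theorem on a carefully-chosen two-parameter family. Identify $\mathbb{RP}^1$ with $\R/\pi \Z$ and lift everything to the universal cover $\R$. Each matrix $A \in \GL_2^+(\R)$ induces an orientation-preserving diffeomorphism $\tilde A \colon \R \to \R$ commuting with translation by $\pi$, whose derivative at the lift $\alpha$ of $\overline{u}$ (for unit $u$) equals $|\det A|/\|Au\|^2$. A rotation $R_\theta$ acts simply as the translation $\alpha \mapsto \alpha + \theta$, of derivative identically $1$. Proportionality of the two vectors in the statement is equivalent to the equation $\Psi(\theta_1, 0) = \Psi(\theta_0, \theta_0)$, where, after choosing a lift $\alpha_0$ of $\overline{v}$, we set
\[
\Psi(s, t) \, := \, \tilde R_s \circ \tilde A_n \circ \tilde R_s \circ \tilde A_{n-1} \circ \cdots \circ \tilde R_s \circ \tilde A_1(\alpha_0 + t).
\]

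Define the orbit points $p_0 := \alpha_0 + t$, $p_{2k-1} := \tilde A_k(p_{2k-2})$ and $p_{2k} := p_{2k-1} + s$. The chain rule then yields
\[
\partial_t \Psi \, = \, \prod_{k=1}^n \tilde A_k'(p_{2k-2})
\qquad\text{and}\qquad
\partial_s \Psi \, = \, 1 \, + \, \sum_{k=1}^{n-1} \prod_{j=k+1}^n \tilde A_j'(p_{2j-2}),
\]
where the $+1$ summand isolates the contribution of the outermost rotation $\tilde R_s$ (no further operation follows it). Every summand of $\partial_s \Psi$ is strictly positive, so $\partial_s \Psi \ge 1$ everywhere. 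Moreover, the $k=1$ summand of $\partial_s \Psi$ is exactly $\prod_{j=2}^n \tilde A_j'(p_{2j-2})$, which is also a factor of $\partial_t \Psi$. Dividing this common factor out, the remaining factor of $\partial_t \Psi$ is $\tilde A_1'(p_0)$, giving
\[
\frac{\partial_t \Psi}{\partial_s \Psi} \, \le \, \tilde A_1'(p_0) \, \le \, \frac{\sigma_1(A_1)}{\sigma_2(A_1)} \, = \, \|A_1\|\,\|A_1^{-1}\| \, \le \, C.
\]

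Since $\partial_s \Psi \ge 1 > 0$ throughout $\R^2$, the map $s \mapsto \Psi(s, t)$ is a smooth bijection of $\R$ onto $\R$ for every fixed $t$, so there exists a unique smooth function $\gamma \colon \R \to \R$ with $\Psi(\gamma(t), t) = T$, where $T := \Psi(\theta_0, \theta_0)$. Clearly $\gamma(\theta_0) = \theta_0$, and by the implicit function theorem
\[
\gamma'(t) \, = \, -\frac{\partial_t \Psi}{\partial_s \Psi} \bigg|_{(\gamma(t), t)} \, \in \, [-C, 0].
\]
Integrating from $\theta_0$ to $0$ yields $|\gamma(0) - \theta_0| \le C|\theta_0|$, so $\theta_1 := \gamma(0)$ satisfies $\Psi(\theta_1, 0) = T$ and $|\theta_1| \le (1+C)|\theta_0|$. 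Choosing $\delta_0 := \delta_1/(1+C)$ guarantees $|\theta_1| \le \delta_1$, which is the desired conclusion.

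The delicate structural point is the inequality $\partial_t \Psi \le C\, \partial_s \Psi$: both $\partial_s \Psi$ and $\partial_t \Psi$ can individually be arbitrarily large when the matrices $A_2, \dots, A_n$ have large eccentricity, yet their ratio is controlled by the eccentricity of $A_1$ alone, thanks to the cancellation of the common factor $\prod_{j=2}^n \tilde A_j'(p_{2j-2})$. This cancellation is precisely what makes $\delta_0$ depend only on $\delta_1$ and $C$, as required by the statement.
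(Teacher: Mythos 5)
Your proof is correct, and while it rests on the same basic mechanism as the paper's argument, the execution is genuinely different and worth comparing. Both proofs compare the two monotone one-parameter families $\theta\mapsto R_\theta A_n\cdots R_\theta A_1 R_\theta v$ and $\theta\mapsto R_\theta A_n\cdots R_\theta A_1 v$ (your $\Psi(\theta,\theta)$ and $\Psi(\theta,0)$) and solve for $\theta_1$, the monotonicity being exactly where positivity of the determinants enters. The paper, however, obtains $\delta_0$ softly: by compactness and continuity it chooses $\delta_0$ so that a matrix with $\|A_1\|\,\|A_1^{-1}\|\le C$ sends $\delta_0$-close directions to $\delta_1/2$-close directions, deduces a cone inclusion for $R_{\delta_0}A_1R_{\delta_0}v$, and finishes with the Intermediate Value Theorem, producing no explicit constant. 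You instead work on the lift of the projective line, use the derivative formula $\det A/\|Au\|^2$ for the projective action, and exploit the cancellation of the common factor $\prod_{j\ge 2}\tilde A_j'(p_{2j-2})$ between $\partial_t\Psi$ and the $k=1$ summand of $\partial_s\Psi$, so that the ratio $\partial_t\Psi/\partial_s\Psi$ is controlled by the eccentricity of $A_1$ alone; the implicit function theorem and integration of $\gamma'\in[-C,0]$ then give the explicit value $\delta_0=\delta_1/(1+C)$, which is sharper (quantitative) information than the paper extracts. Three cosmetic remarks: dispose of the trivial case $v=0$ separately, as the paper does; proportionality corresponds to equality of $\Psi$-values only modulo $\pi$, but since you solve $\Psi(\theta_1,0)=\Psi(\theta_0,\theta_0)$ exactly this only affects the word ``equivalent'', not the argument; and for $n=1$ the ``$k=1$ summand'' of $\partial_s\Psi$ is absent, though the bound $\partial_t\Psi\le C\,\partial_s\Psi$ holds trivially there because $\partial_s\Psi=1$.
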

\begin{proof}
Given $\delta_1>0$ and $C>1$, using compactness and continuity we may choose $\delta_0 \in (0, \delta_1/2]$ such that:
$$
\left.
\begin{array}{ll}
A_1 \in \GL_2^+(\R),         &\|A_1\| \cdot \|A_1^{-1}\| \le C     \\
u, v \in \R^2\setminus\{0\}, &\angle(u,v)     \le \delta_0
\end{array}
\right\}
\ \Rightarrow \ 
\angle(A_1 u, A_1 v) \le \delta_1/2 \, .
$$

Now let $A_1$, \dots, $A_n$, $v$, and ${\theta_0}$ be as in the statement of the lemma.
Assume that $v \neq 0$ and $\theta_0 \neq 0$, otherwise there is nothing to prove.
Assume also that $\theta_0>0$, the other case being analogous.
Note that
\begin{equation}\label{eq:note}
R_{\delta_0} A_1 R_{\delta_0} v \text{ belongs to the cone }
\big\{ t \, R_\theta A_1 v \colon t > 0, \ \theta \in [0,\delta_1] \big\} \, .
\end{equation}
Let $f$, $g \colon \R \ \to \R$ be continuous functions such that $f(0) = g(0)$ and
for all $\theta \in \R$,
\begin{align*}
R_{\theta} A_n R_{\theta} A_{n-1} R_\theta \cdots R_{\theta} A_1 R_{\theta} v &\text{ is proportional to } (\cos f(\theta), \sin f(\theta)) \, , \\[2mm]
R_{\theta} A_n R_{\theta} A_{n-1} \cdots R_{\theta} A_1 v &\text{ is proportional to } (\cos g(\theta), \sin g(\theta)) \, . 
\end{align*}
Then $f$ and $g$ are monotonically increasing.
It follows from \eqref{eq:note} that $f(\delta_0) \le g(\delta_1)$.
So, by the Intermediate Value Theorem, there exists $\theta_1 \in [0, \delta_1]$
such that $g(\theta_1) = f(\theta_0)$.
This proves the lemma.
\end{proof}

\begin{proof}[of Lemma~\ref{le:access_2}]
Combine Corollary~\ref{co:ABD_bi} with Lemma~\ref{le:quick_fix}.
\end{proof}

\begin{remark}\label{re:access_sufficient}
If 
$\sigma_1(A_n \cdots A_1) / \sigma_2(A_n \cdots A_1) < c^2 \lambda^{2n}$ 
then condition \eqref{eq:condition_v_w} from Lemma~\ref{le:access_2} 
holds for every pair of nonzero vectors $v$ and~$w$.
\end{remark}

\subsection{Higher-dimensional accessibility}

We now apply the previous results to derive a higher-dimensional result which is needed in the proof of Theorem~\ref{th:formula-GLd}.

\begin{lemma}\label{le:access_higher}
For all $\epsilon>0$ and $M>1$ there exist constants $c>0$ and $\lambda>1$ with the following property: 
if $1 \le p < d$ are integers and, 
$A_1$, \ldots, $A_m$ are matrices in $\GL_d(\R)$ with $\|A_i^{\pm 1}\| \le M$ for each $i$ 
and such that
\[
\frac{\sigma_{p}(A_m\cdots A_1)}{\sigma_{p+1}(A_m\cdots A_1)} < c^2 \lambda^{2m} \, ,
\]
then given any pair $E$, $F$ of subspaces of $\R^d$ with $\dim E = \codim F = p$,
there exist matrices $L_1$, \dots, $L_m \in \GL_d(\R)$ such that $\|L_i - A_i \|\leq  \epsilon$
and $(L_m \cdots L_1)(E) \cap F \neq \{0\}$.
\end{lemma}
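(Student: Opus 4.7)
The plan is to reduce this higher-dimensional accessibility problem to the two-dimensional Lemma~\ref{le:access_2} via the $p$-th exterior power of $\R^d$. The first step is to encode the desired conclusion algebraically. Choose bases $\{e_i\}$ of $E$ and $\{f_j\}$ of $F$ and set $\eta := e_1 \wedge \cdots \wedge e_p \in \wed^p \R^d$ and $\omega := f_1 \wedge \cdots \wedge f_{d-p} \in \wed^{d-p} \R^d$. Writing $P := A_m \cdots A_1$, the target condition $(L_m \cdots L_1)(E) \cap F \neq \{0\}$ is equivalent to the single scalar equation
\[
\wed^p(L_m \cdots L_1)\eta \wedge \omega = 0 \quad \text{in } \wed^d \R^d \cong \R,
\]
that is, to $\wed^p(L_m \cdots L_1)\eta$ lying in the codimension-one hyperplane $H_\omega := \ker(\,\cdot \wedge \omega) \subset \wed^p \R^d$. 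Thus the codimension-$p$ geometric condition becomes a codimension-one linear condition in the exterior power.

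The hypothesis then translates perfectly: by formulas \eqref{eq:power-norm} and \eqref{eq:power-sigma2},
\[
\frac{\sigma_1(\wed^p P)}{\sigma_2(\wed^p P)} = \frac{\sigma_p(P)}{\sigma_{p+1}(P)} < c^2 \lambda^{2m},
\]
so the product $\wed^p P$ is ``non-1-dominated'' in exactly the sense of Remark~\ref{re:access_sufficient}, while $\|\wed^p A_i^{\pm 1}\| \le M^p$ preserves a uniform bound on the individual factors. Inside $\wed^p \R^d$, we are thus in a two-dimensional-like situation controlled by this singular-value gap.

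The central geometric observation is that rotations in appropriately chosen 2-planes of $\R^d$ descend to rotations of the relevant 2-plane in $\wed^p \R^d$. Concretely, let $v_1, \dots, v_d$ be an orthonormal basis of right singular vectors of $P$ and set $W := \mathrm{span}(v_p, v_{p+1})$. The rotation $R_\theta^W \in SO(d)$ which rotates $W$ by angle $\theta$ and fixes $W^\perp$ induces on the 2-plane $\Pi := \mathrm{span}(v_1 \wedge \cdots \wedge v_p,\ v_1 \wedge \cdots \wedge v_{p-1} \wedge v_{p+1}) \subset \wed^p \R^d$ a rotation by the same angle $\theta$, and $\Pi$ is precisely the span of the two dominant right singular directions of $\wed^p P$. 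Choose nonzero $e \in E$ and $w \in P^{-1}(F) \setminus \{0\}$ (available since $\dim P^{-1}(F) = d-p \ge 1$), and set $L_i := R_{\theta_1}^W A_i$. Applying Lemma~\ref{le:access_2} via $\wed^p$ to the effective two-dimensional system on $\Pi$ yields some $\theta_1 \in [-\delta, \delta]$ for which $\wed^p(L_m \cdots L_1)\eta$ becomes proportional to $\wed^p P \cdot \eta'$ for a decomposable $\eta'$ chosen so that $\wed^p P \eta' \in H_\omega$. For $\delta$ small enough depending on $\epsilon$ and $M$, one has $\|L_i - A_i\| \le \|R_{\theta_1}^W - I\| \cdot \|A_i\| \le 2M |\sin(\theta_1/2)| \le \epsilon$.

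The principal technical obstacle is making the reduction to two dimensions fully rigorous: the plane $W$ is defined using the singular vectors of the \emph{total} product $P$, yet the perturbing rotation $R^W_{\theta_1}$ is inserted at each intermediate step. One must therefore verify that the iterated product, restricted to or projected onto $\Pi \subset \wed^p \R^d$, genuinely behaves like a two-dimensional non-dominated product satisfying the hypotheses of Lemma~\ref{le:access_2} (and of the underlying ingredients Lemma~\ref{le:ABD} and Corollary~\ref{co:ABD_bi}). The gap $\sigma_1(\wed^p P)/\sigma_2(\wed^p P) < c^2 \lambda^{2m}$, together with the uniform operator-norm control $\|\wed^p A_i^{\pm 1}\| \le M^p$, should provide enough contraction on the projective Hilbert metric to transport the argument faithfully from the one-step $\R^2$-setting to the present multi-step $\wed^p \R^d$-setting, with the constants $c$ and $\lambda$ obtained from those of Lemma~\ref{le:access_2} by factors depending polynomially on $M$ and $p$.
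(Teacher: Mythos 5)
Your reduction of the conclusion to the exterior power is fine as far as it goes: $(L_m\cdots L_1)(E)\cap F\neq\{0\}$ is indeed equivalent to $\wed^p(L_m\cdots L_1)\eta\wedge\omega=0$, and the hypothesis does translate into $\sigma_1(\wed^p P)/\sigma_2(\wed^p P)<c^2\lambda^{2m}$. But the central step --- ``applying Lemma~\ref{le:access_2} via $\wed^p$ to the effective two-dimensional system on $\Pi$'' --- has no content as written. Lemma~\ref{le:access_2} is a statement about matrices in $\GL_2^+(\R)$ acting on $\R^2$; the matrices $\wed^p A_i$ do not preserve your plane $\Pi\subset\wed^p\R^d$, so there is no two-dimensional cocycle to which the lemma can be applied. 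Moreover your rotation plane $W=\mathrm{span}(v_p,v_{p+1})$ is built from the singular vectors of the \emph{total} product $P$, yet the perturbation $R^W_{\theta_1}$ is inserted at every intermediate time, when the orbit of $E$ bears no controlled relation to $W$, so the cumulative effect on $\wed^p(L_m\cdots L_1)\eta$ is not governed by a single planar rotation. The closing appeal to ``contraction on the projective Hilbert metric'' also points the wrong way: the hypothesis is precisely that the singular gap is \emph{small} (lack of domination), so there is no contraction to exploit --- in Lemma~\ref{le:ABD} and Corollary~\ref{co:ABD_bi} the smallness of the gap is what guarantees that bounded rotations can swing the image direction onto the target, via monotonicity and the intermediate value theorem. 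Finally, you take $e\in E$ and $w\in P^{-1}(F)$ arbitrary, whereas a condition of type \eqref{eq:condition_v_w} must be verified for the specific pair actually used; Remark~\ref{re:access_sufficient} does not rescue this in a $\binom{d}{p}$-dimensional space.

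The paper's proof stays in $\R^d$ and supplies exactly the two ingredients your sketch is missing. First, a dimension count picks the right pair of vectors: a unit vector $v\in E\cap S$, where $S$ is the span of the singular directions of $P$ with singular value at most $\sigma_p(P)$ (so $\|Pv\|\le\sigma_p(P)$), and a unit vector $w$ in the span $U$ of the singular directions with singular value at least $\sigma_{p+1}(P)$ chosen so that $Pw\in F$ (possible since $\dim P(U)>p=\codim F$); this yields $\|Pv\|/\|Pw\|<c^2\lambda^{2m}$, i.e.\ condition \eqref{eq:condition_v_w} for that pair. Second, it restricts the $A_i$ to the \emph{moving} planes $V_i:=A_i\cdots A_1(\mathrm{span}(v,w))$, oriented so the restrictions are orientation-preserving with $\|\tilde A_1\|\,\|\tilde A_1^{-1}\|\le M^2$, applies Lemma~\ref{le:access_2} with $\delta_1=\epsilon/M$ to this genuine two-dimensional system, and then extends the resulting in-plane rotations to $\R^d$ by the identity on the orthogonal complements; the perturbed product sends $v\in E$ to a multiple of $Pw\in F$. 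Without some such device producing an honest two-dimensional system (fixed planes and exterior powers do not provide one), your argument has a genuine gap at its key step.
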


%


\begin{proof} 
Given $\epsilon$ and $M$,
let $c$ and $\lambda$ denote the constants provided by Lemma~\ref{le:access_2} for the values $\delta_1 := \epsilon/M$ and $C := M^2$. 


Now fix integers $1 \le p < d$ and 
matrices $A_1$, \ldots, $A_m$ in $\GL_d(\R)$ such that $\|A_i^{\pm 1}\| \le M$ for each $i$ 
and $\sigma_p(P) / \sigma_{p+1}(P) < c^2 \lambda^{2m}$, where $P:=A_m\cdots A_1$.
Also fix subspaces $E$ and $F$ of $\R^d$ such that $\dim E = \codim F = p$.

Let $S\subseteq \R^d$ denote the span of the set of eigenvectors of $P^*P$ which correspond to eigenvalues less than or equal to $\sigma_p(P)^2$. We clearly have $\codim S < p$ and therefore $E \cap S \neq \{0\}$, so we may choose a unit vector $v \in E \cap S$.
Since $S$ admits an orthonormal basis consisting of eigenvectors of $P^*P$, by writing $v$ as a linear combination of these basis elements we may easily estimate
\[\|Pv\|^2 = \langle Pv,Pv\rangle = \langle P^*Pv,v\rangle \leq \sigma_p(P)^2\|v\|^2\]
so that $0<\|Pv\|\leq \sigma_p(P)$. Similarly let us define $U\subseteq \R^d$ to be the span of the set of eigenvectors of $P^*P$ which correspond to eigenvalues greater than or equal to $\sigma_{p+1}(P)^2$: a similar calculation  shows that $\|Pw\|\geq \sigma_{p+1}(P)\|w\|$ for every $w \in U$.
Since $P$ is invertible we have $\dim P(U) = \dim U >p$ and we may therefore choose a unit vector $w \in U$ such that $Pw \in F$. We thus have
\[
\frac{\|Pv\|}{\|Pw\|} < c^2\lambda^{2m}.
\]

Let $V_0 \subseteq \R^2$ denote the space spanned by $v$ and $w$, for each $i=1,\ldots,m$ define $V_i:=A_i \cdots A_1 V_0$, and let $\tilde{A}_i \colon V_{i-1}\to V_i$ denote the restriction of $A_i$ to $V_{i-1}$. We orient each plane $V_i$ so that the maps $\tilde{A}_i$ become orientation-preserving.
By Lemma~\ref{le:access_2}, there exists an angle $\theta_1 \in [-\delta_1, \delta_1]$
such that if $\tilde R_i$ denotes the rotation of the plane $V_i$ by angle $\theta_1$
then the nonzero vector 
$\tilde{R}_m A_m \tilde{R}_{m-1} \tilde{A}_{m-1} \cdots \tilde{R}_1 \tilde{A}_1 v$
is proportional to $Pw$.
Extend $\tilde{R}_i$ to the linear map $\hat{R}_i$ on $\R^d$ that equals the identity on the orthogonal complement of $\tilde{V}_i$. Then 
$\hat{R}_m A_m \hat{R}_{m-1} A_{m-1} \cdots \hat{R}_1 A_1v$
is proportional to $Pw$. Taking $L_i:=\hat{R}_i A_i$  we have $\|L_i - A_i\| \leq \frac{\epsilon}{M} \|A_i\| \leq \epsilon$ for every $i=1,\ldots,m$.
Since $v \in E$ and $Pw \in F$,
we achieve the desired conclusion. 
\end{proof}

\begin{remark}
It is also possible to prove Lemma~\ref{le:access_higher} by adapting some arguments from \cite{BoV}:
see \cite[\S~3.3]{B_minimal}.
\end{remark}


\section{Characterisation of discontinuities in dimension $2$}\label{se:GL2+}

The aim of this section is to prove formula~\eqref{eq:formula-GL2+},
which immediately implies Theorem~\ref{th:GL2+}.
That formula holds trivially when $\ell(A) = 1$,
so we are left to consider the case $\ell(A) = 2$.
We will actually prove the following slightly stronger fact:

\begin{lemma}\label{le:theorem}
If $\sA \in \cK(\GL_2^+(\R))$ is not $1$-dominated then
for every $\delta > 0$ there exists $\theta_0 \in [-\delta,\delta]$ such that
\begin{equation}\label{eq:the_theorem}
\subrad(R_{\theta_0} \sA) = \subrad(\wed^2\sA)^{1/2}.
\end{equation}
\end{lemma}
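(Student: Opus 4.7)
The plan is to extend the mechanism of Example~\ref{ex:simple} using the accessibility tools from Section~\ref{se:access}. Set $d_0 := \subrad(\wed^2 \sA) = \min_{A \in \sA} \det A$ and choose $A_0 \in \sA$ attaining this minimum. If $A_0$ has repeated or complex-conjugate eigenvalues then $\rho(A_0) = \sqrt{d_0}$, which together with the matching lower bound from \eqref{eq:wedges} already forces $\subrad(\sA) = d_0^{1/2}$ and the choice $\theta_0 = 0$ works. I therefore assume $A_0$ has distinct real eigenvalues $\mu_1, \mu_2$ with $|\mu_1| > \sqrt{d_0} > |\mu_2|$, with eigenvectors $v, w$; after shrinking $\delta$ if necessary, the same properties persist for $R_\theta A_0$ for every $\theta \in [-\delta, \delta]$, yielding continuous eigenvalues $\tilde\mu_i(\theta)$ with $\tilde\mu_1 \tilde\mu_2 = d_0$ and continuous eigenvectors $v'(\theta), w'(\theta)$.

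The central step is to find $\theta_0 \in [-\delta, \delta]$ and a product $P = B_m \cdots B_1$ of elements of $\sA$ for which the perturbed product $P'(\theta_0) := R_{\theta_0} B_m \cdots R_{\theta_0} B_1$ sends $v'(\theta_0)$ into the line through $w'(\theta_0)$. To produce such data, I would let $c, \lambda$ be the constants supplied by Lemma~\ref{le:access_2} for $\delta_1 := \delta$ and $C := \max_{A \in \sA} \|A\|\,\|A^{-1}\|$, and invoke non-$1$-domination of $\sA$ to locate a product $P$ of length $m$ with $\sigma_1(P)/\sigma_2(P) < c^2 \lambda^{2m}$. By Lemma~\ref{le:access_2} together with Remark~\ref{re:access_sufficient}, for each target direction $\overline{u} \in \R P^1$ some $\theta \in [-\delta, \delta]$ satisfies $\overline{P'(\theta) v} = \overline{u}$, so the continuous map $\theta \mapsto \overline{P'(\theta) v}$ covers all of $\R P^1$ and hence has winding at least one over $[-\delta, \delta]$. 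A winding-number argument then compares this with the slowly-varying map $\theta \mapsto \overline{w'(\theta)}$, whose winding over $[-\delta, \delta]$ is strictly less than one once $\delta$ is small, and delivers the required $\theta_0 \in [-\delta, \delta]$ at which $\overline{P'(\theta_0) v'(\theta_0)} = \overline{w'(\theta_0)}$.

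Given such $\theta_0$, I form the products $Q_k := (R_{\theta_0} A_0)^k \, P'(\theta_0) \, (R_{\theta_0} A_0)^k$ of length $2k + m$, each a product of matrices in $R_{\theta_0}\sA$. In the basis $(v'(\theta_0), w'(\theta_0))$, the factor $(R_{\theta_0} A_0)^k$ becomes $\mathrm{diag}(\tilde\mu_1^k, \tilde\mu_2^k)$ and the alignment condition forces $P'(\theta_0)$ to have the form $\left(\begin{smallmatrix} 0 & \gamma \\ \alpha & \beta \end{smallmatrix}\right)$, whence a direct computation using $\tilde\mu_1\tilde\mu_2 = d_0$ gives
\[
Q_k = \begin{pmatrix} 0 & \gamma d_0^k \\ \alpha d_0^k & \beta \tilde\mu_2^{2k} \end{pmatrix}.
\]
Since $|\tilde\mu_2|^2 < d_0$, the trace $\beta \tilde\mu_2^{2k}$ is exponentially smaller than $\sqrt{|\det Q_k|} = \sqrt{|\det P|}\, d_0^k$, so for all sufficiently large $k$ the discriminant of the characteristic polynomial is negative and $Q_k$ has complex-conjugate eigenvalues of modulus exactly $\sqrt{|\det P|}\, d_0^k$. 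Hence $\rho(Q_k)^{1/(2k+m)} \to d_0^{1/2}$ as $k \to \infty$, giving $\subrad(R_{\theta_0} \sA) \le d_0^{1/2}$; combined with the automatic lower bound from \eqref{eq:wedges} this yields the claimed equality.

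The main obstacle will be the winding-number step: Lemma~\ref{le:access_2} aligns $P'(\theta_0) v$ with a fixed target $Pw$, whereas one needs alignment of the moving vector $P'(\theta_0) v'(\theta_0)$ with the moving target $w'(\theta_0)$. Bridging this requires showing that the perturbation $v \mapsto v'(\theta)$ preserves the large-winding property of $\theta \mapsto \overline{P'(\theta) v}$ that is inherited from surjectivity. The expected route is to exploit non-$1$-domination to pick $P$ with $\sigma_1(P)/\sigma_2(P)$ arbitrarily close to $1$, which ensures that $P'(\theta)$ distorts $O(\theta)$-perturbations of its input by a controlled amount and hence that the winding of $\theta \mapsto \overline{P'(\theta) v'(\theta)}$ still exceeds that of $\theta \mapsto \overline{w'(\theta)}$ on $[-\delta,\delta]$.
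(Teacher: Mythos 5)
Your overall architecture -- use Lemma~\ref{le:access_2} with Remark~\ref{re:access_sufficient} to align a perturbed product with eigendata of the determinant-minimising matrix, then append a long power of $R_{\theta_0}A_0$ to force non-real eigenvalues of modulus $\sqrt{\det}$ -- is exactly the paper's strategy, and your final $2\times 2$ computation in the eigenbasis is correct. The gap is in the step you yourself flag as the main obstacle: passing from alignment with the frozen eigenvectors $v'(0),w'(0)$ to alignment with the moving eigenvectors $v'(\theta_0),w'(\theta_0)$. Your proposed bridge rests on the claim that non-$1$-domination lets you ``pick $P$ with $\sigma_1(P)/\sigma_2(P)$ arbitrarily close to $1$''. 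This is false: the negation of Definition~\ref{de:dominated}(a) only yields, for the given constants $c,\lambda$, some product of length $n$ with $\sigma_1/\sigma_2 < c^2\lambda^{2n}$, a bound which grows exponentially in $n$; it does not produce products that are nearly conformal. For instance the singleton $\bigl\{\bigl(\begin{smallmatrix}1&1\\0&1\end{smallmatrix}\bigr)\bigr\}$ is not $1$-dominated, yet every power has $\sigma_1/\sigma_2$ bounded away from $1$. So the quantitative ``controlled distortion of $O(\theta)$-perturbations'' you hope for is simply not available, and the winding comparison collapses. A secondary weakness in the same step: surjectivity of $\theta\mapsto\overline{P'(\theta)v}$ does not by itself give winding number at least one (a surjective path on $\R P^1$ can backtrack); what saves this is monotonicity of the angle in $\theta$, which you never state or use.

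The paper closes precisely this gap with a monotonicity-plus-intermediate-value argument rather than a quantitative one. It shows that the angle functions $f_1,f_2$ of the eigendirections of $R_\theta B$ move in \emph{opposite} directions ($f_1$ increasing, $f_2$ decreasing), by viewing them as the attracting and repelling fixed points of the orientation-preserving circle map induced by $B$ and computing $f_j'(\theta)=[1-h'(f_j(\theta))]^{-1}$; this is where positive determinant is essential (cf.\ Remark~\ref{re:monotonicity}). It also uses that the alignment angle $g(\theta,\phi)$ of $Q_\theta v_{1,\phi}$ is increasing in both variables. From $h(0)\le 0$ and $h(\theta_1)\ge 0$ for $h(\theta)=g(\theta,\theta)-f_2(\theta)$, the Intermediate Value Theorem produces $\theta_0$ with $Q_{\theta_0}v_{1,\theta_0}$ proportional to $v_{2,\theta_0}$ -- no closeness of $\sigma_1(P)/\sigma_2(P)$ to $1$ is needed, and indeed the decrease of $f_2$ (not mere slow variation) is what makes the sign at $\theta_1$ come out right even when the integer winding offset is zero. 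Without supplying these monotonicity facts, or some substitute for them, your proof does not go through.
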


\begin{proof} 
Let $\sA \in \cK(\GL_2^+(\R))$.
Fix $B \in \sA$ such that $\det B =  \inf_{A \in \sA} (\det A)$,
which by \eqref{eq:min-det} equals $\subrad(\wed^2\sA)$.
Notice that one direction of inequality in \eqref{eq:the_theorem} is automatic, since 
$$
\subrad(R_\theta \sA) \ge \subrad(\wed^2(R_\theta \sA))^{1/2} = (\det B)^{1/2} 
\quad \text{for every~$\theta \in \R$.}
$$

Assume that $\sA$ is not $1$-dominated and let $\delta>0$ be arbitrary.
If there exists $\theta \in [-\delta,\delta]$ such that
the eigenvalues of $R_\theta B$ have equal absolute value then
$$
\subrad(R_\theta\sA) \leq \rho(R_\theta B) = (\det R_\theta B)^{1/2}=
(\det B)^{1/2} ,
$$
and there is nothing left to prove.
For the remainder of the proof we therefore assume that 
for each $\theta \in [-\delta, \delta]$,
the matrix $R_\theta B$ has eigenvalues $\mu_{1,\theta}$, $\mu_{2,\theta}$
with $|\mu_{1,\theta}| > |\mu_{2,\theta}|$.

For each $j=1$, $2$, let $v_{j,\theta}$ be an eigenvector of $R_\theta B$ corresponding to $\mu_{j,\theta}$,
chosen so that it has unit norm and depends smoothly on $\theta$. 
So there are smooth functions $f_j \colon [-\delta,\delta] \to \R$ such that
$$
v_{j, \theta} = (\cos f_j(\theta), \sin f_j(\theta)) \, .
$$
Clearly we may choose these functions so as to satisfy the additional inequality
$$
0 < f_2(0) - f_1 (0) < \pi .
$$

\begin{claim}
The functions $f_i$ are monotonic: $f_1$ is increasing and $f_2$ is decreasing.
\end{claim}

\begin{proof}[of the claim]
Let $h \colon \R \to \R$ be the unique smooth function such that
for all $\phi \in \R$, $\theta\in [-\delta,\delta]$, $j\in\{1,2\}$,
$$
\left\{
\begin{array}{l}
B (\cos \phi, \sin \phi) \text{ is proportional to } (\cos h(\phi), \sin h(\phi) )\, , \\[2mm]
\theta + h(f_j(\theta)) = f_j(\theta) \, . 
\end{array}
\right.
$$
Since $B$ has positive determinant it is clear that $h$ is an increasing function.
Moreover, for each $\theta \in [-\delta,\delta]$, the points 
$f_1(\theta)$ and $f_2(\theta)$ are fixed under the map $\phi \mapsto \theta + h(\phi)$,
the former being exponentially attracting and the latter being exponentially repelling.
So we have
$$
0 < h'(f_1(\theta)) < 1 \quad \text{and} \quad h'(f_2(\theta)) > 1 \, .
$$
Since
$f_j'(\theta) = [1 - h'(f_j(\theta))]^{-1}$ for both $j=1,2$,
it follows that $f_1'(\theta) > 0$ and $f_2'(\theta) < 0$.
\end{proof}

Let $c>0$ and $\lambda>1$ be the constants provided by Lemma~\ref{le:access_2}
in respect of $\delta_1 := \delta$
and $C:= \max_{A \in \sA} \|A\|\, \|A^{-1}\|$.
Since $\sA$ is not $1$-dominated, 
by Definition~\ref{de:dominated}(\ref{i:dom-singular}) there exists a finite sequence of matrices 
$A_1$, \dots, $A_n \in \sA$ such that:
$$
\frac{\sigma_1(A_n \cdots A_1)}{\sigma_2(A_n \cdots A_1)} < c^2 \lambda^{2n} \, .
$$
By Lemma~\ref{le:access_2} and Remark~\ref{re:access_sufficient}
we may choose $\theta_1 \in [-\delta,\delta]$ 
such that $Q_{\theta_1} v_{1,0}$ is proportional to $v_{2,0}$,
where 
$$
Q_\theta := R_\theta A_n \cdots R_\theta A_1.
$$

\begin{claim}
There exists $\theta_0 \in [-\delta,\delta]$ such that $Q_{\theta_0} v_{1,\theta_0}$
is proportional to $v_{2,\theta_0}$.
\end{claim}

\begin{proof}[of the claim]
If $\theta_1=0$ then there is nothing to prove, so we assume that $\theta_1\neq 0$. We shall give the proof when $\theta_1>0$, the argument in the case $\theta_1 < 0$ being analogous.

Let $g \colon [-\delta,\delta]^2 \to \R$ be the continuous function such that
$$
\left\{
\begin{array}{l}
Q_\theta v_{1,\phi} \text{ is proportional to } (\cos g(\theta,\phi), \sin g(\theta,\phi)) \, , \\[2mm]
f_2(0) - \pi < g(0,0) \le f_2(0).
\end{array}
\right.
$$
The function $g(\theta,\phi)$ is obviously increasing with respect to $\theta$,
and is also increasing with respect to $\phi$ (because $f_1(\phi)$ is). 

Since $Q_{\theta_1} v_{1,0}$ is proportional to $v_{2,0}$,
$$
g(\theta_1,0) = f_2(0) + m \pi \quad \text{for some } m \in \Z.
$$
Note that since $g$ is increasing in the first variable
$$
m \pi = g(\theta_1,0) - f_2(0) \ge g(0,0) - f_2(0) > -\pi \, ,
$$
and so $m \ge 0$.
Consider the function $h(\theta) := g(\theta,\theta) - f_2(\theta)$.
On the one hand $h(0) \le 0$, while on the other hand since $g$ is increasing in the second variable
$$
h(\theta_1) \ge g(\theta_1,0) - f_2(\theta_1) =  f_2(0) + m \pi - f_2(\theta_1) \ge m \pi \ge 0.
$$
Thus, by the Intermediate Value Theorem, 
there exists $\theta_0 \in [0,\theta_1]$ such that $h(\theta_0) = 0$.
\end{proof}

\begin{claim}
If $k$ is sufficiently large then 
$(R_{\theta_0} B)^k Q_{\theta_0}$ has non-real eigenvalues.
\end{claim}

\begin{proof}
The matrix of $(R_{\theta_0} B)^k Q_{\theta_0}$ 
with respect to the basis $(v_{1,\theta_0}, v_{2,\theta_0})$ is
$$
M_k = 
\begin{pmatrix}
\mu_{1,\theta_0}^k & 0 \\ 0 & \mu_{2,\theta_0}^k
\end{pmatrix}
\begin{pmatrix}
0 & b \\ c & d
\end{pmatrix} = 
\begin{pmatrix}
0 & b \mu_{1,\theta_0}^k \\ c \mu_{2,\theta_0}^k & d \mu_{2,\theta_0}^k
\end{pmatrix} \, ,
$$
where $b$, $c$, $d$ do not depend on $k$.
Thus, for sufficiently large $k$,
$$
\frac{(\tr M_k)^2}{4 \left|\det M_k\right|}  = 
\left| \frac{d^2}{4bc} \cdot \frac{\mu_{2,\theta_0}^k}{\mu_{1,\theta_0}^k}\right| < 1
$$
and so $M_k$ has non-real eigenvalues.
\end{proof}
	
In particular $\rho((R_{\theta_0} B)^k Q_{\theta_0}) 
=  \big( \det (R_{\theta_0} B)^k Q_{\theta_0} \big)^{1/2}$
for sufficiently large $k$.
It follows that:
\begin{equation*}
\log \subrad(R_{\theta_0} \sA) \le \liminf_{k \to \infty} \frac{1}{k+n} \log \rho((R_{\theta_0} B)^k Q_{\theta_0})  
=   \liminf_{k \to \infty} \frac{k \log \det B + \log \det Q_{\theta_0} }{2(k+n)} 
=   \frac{\log \det B}{2} \, .
\end{equation*}
This proves the lemma.
\end{proof}

\begin{remark}\label{re:monotonicity}
The alert reader will have noticed that monotonicity plays an essential role in the proof above.
This property, which is also important in the proof of Lemma~\ref{le:ABD} in \cite{ABD},
breaks down if we allow negative determinants, 
and ceases to make sense in higher dimensions.
We will discuss related matters in \S\ref{ss:fixed_card} below.
\end{remark}

\section{Characterisation of discontinuities in arbitrary dimension}\label{se:iff-GLd}

In this section we prove Theorem~\ref{th:formula-GLd}:
if $\ell(\sA)$ is the smallest index of domination for $\sA \in \cK(\GL_d(\R))$
then
$\liminf_{\sB \to \sA} \subrad(\sB) =
\subrad\left(\wed^{\ell(\sA)} \sA\right)^{1/\ell(\sA)}$.

\subsection{Outline}

Let $\sA \in \cK(\GL_d(\R))$ be given, and for convenience let us write $\ell:=\ell(\sA)$. It is sufficient to prove that $\liminf_{\sB \to \sA} \subrad(\sB) \le \subrad\left(\wed^{\ell} \sA\right)^{1/\ell}$, since the inequality in the other direction is a direct consequence of Theorem~\ref{th:lipschitz}, as explained in the introduction. We can also assume that $\subrad(\sA)>\subrad(\wed^{\ell}\sA)^{1/\ell}$, otherwise there is nothing to prove.
Recall that the quantity $\|\wed^\ell A\|^{1/\ell}$ is precisely the geometric mean of the first $\ell$ singular values of the matrix $A$. The inequality between the two lower spectral radii therefore asserts, in effect, that if a product $P$ of elements of $\sA$ approximately minimises $\|\wed^\ell P\|$ then the first singular value of $P$ must be significantly larger than the geometric mean of the first $\ell$ singular values of $P$. We will show that by slightly enlarging the set $\sA$, we may find a nearby product $P'$ such that the geometric mean of the first $\ell$ singular values is similar to that of $P$, but such that those singular values are more closely distributed around their geometric mean. By iterating this construction we bring these singular values so closely into alignment with one another that the first singular value must closely approximate the geometric mean of the first $\ell$ singular values, and the lower spectral radius of the perturbed version of $\sA$ may in this manner be reduced arbitrarily close to $\subrad(\wed^\ell \sA)^{1/\ell}$ by an arbitrarily small perturbation.

\smallskip

The technical steps involved are roughly as follows. Recall that in Example~\ref{ex:simple} and in the proof of Theorem~\ref{th:GL2+} we constructed products with small norm by taking a long product $P$ and composing it with a short product $R$ which transported the more expanding eigenspace of $P$ onto the more contracting eigenspace of $P$ in such a manner that the 
absolute value of the eigenvalues of $PR$ both coincided with $\sqrt{|\det PR|}$.
At the core of Theorem~\ref{th:formula-GLd} is a higher-dimensional version of this principle which is summarised in Lemma~\ref{le:e-f-bound} below: given a matrix $P \in \GL_d(\R)$ and an integer $p$ in the range $1 \leq p<d$, there exist subspaces $E$, $F$ of $\R^d$ such that if $R \in \GL_d(\R)$ satisfies $R(E) \cap F \neq \{0\}$ then the norm $\|\wed^p PRP\|$ is bounded above by the reduced quantity $(\sigma_{p+1}(P)/\sigma_p(P))\|\wed^p P\|^2$ up to a multiplicative factor depending only on $R$. 
(This idea originates in \cite{BoV}.)
We will apply this result in combination with Lemma~\ref{le:access_higher} above, which shows that if $\sA$ is not $p$-dominated, then given such a product $P$ of elements of $\sA$ the desired matrix $R$ can be constructed as a product of matrices close to $\sA$ with an a priori bound on $R$ depending on the desired degree of closeness. 

The reader will notice that this procedure is only directly useful for reducing the norms of products of elements of $\sA$ if the product $P$ may be chosen in such a way that $\sigma_2(P)/\sigma_1(P)\ll1$, which may fail to be possible when the dimension $d$ exceeds two: for example, if $d \geq \ell =4$ then it could be the case that the first two singular values of $P$ are equal to one another and exceed the geometric mean of the first four singular values, whilst the third and fourth singular values are much smaller than the geometric mean. In such an instance the above argument does not directly allow us to find a product $PRP$ whose first singular value is smaller than that of $P$ relative to the length of the product. Instead the appropriate procedure is to apply the above argument with $p=2$, creating a nearby product belonging to a perturbed set $\sA'$ whose second singular value is greatly reduced: by applying the argument a second time to this new perturbed set with $p=1$, we finally succeed in reducing the first singular value and hence the lower spectral radius. A key feature of this procedure is the observation that the choice $p=2$ marks a large disagreement between successive singular values which can be productively exploited to bring the singular values closer to their geometric mean: the existence of such a `pivot' $p$ is given as Lemma~\ref{le:pivot} below.

\smallskip

The sketch above suggests an algorithmic way to construct discontinuities. 
Our actual proof is more direct than this but is less constructive: rather than repeatedly perturbing $\sA$ by appending to it finite sets of nearby matrices, we simply expand $\sA$ to include \emph{all} matrices within distance $\varepsilon$ of $\sA$ and show that this has the same effect as performing the above sequence of perturbations arbitrarily many times.
To facilitate this technical shortcut, we consider some especially convenient quantities 
\eqref{eq:zeta} and \eqref{eq:Z} that are tailored to measure disagreements between singular values
and lower spectral radii.

\subsection{Setup}

In order to formalise these arguments we require some notation. For each $A \in \GL_d(\R)$ and $k=1,\ldots,d$ we denote the logarithm of the $k^{\mathrm{th}}$ singular value by
\[\lambda_k(A):=\log \sigma_k(A),\]
and the total of the logarithms of the first $k$ singular values by
\[\tau_k(A):=\sum_{i=1}^k \lambda_i(A)=\log(\sigma_1(A)\cdots \sigma_k(A)) = \log\left\|\wed^kA\right\|.\]
We define also $\tau_0(A):=0$. To measure the amount of agreement between the first $k$ singular values of a matrix $A$  we use the following device which was introduced by the first named author in \cite{B_minimal}. Given $A \in \GL_d(\R)$ and $k \in \{2,\ldots,d\}$ let us define
\begin{equation}\label{eq:zeta}
\zeta_k(A):=\tau_1(A)+\tau_2(A) + \cdots +\tau_{k-1}(A) - \left(\frac{k-1}{2}\right)\tau_k(A).
\end{equation}
It may be found helpful to interpret the function $\zeta_k$ visually as follows. Given an integer $k \in \{2,\ldots,d\}$, consider the graph of the function $[0,k] \to \R$ defined by $i \mapsto \tau_i(A)$ for the integers $i=0,\ldots,k$ and by affine interpolation on each of the intervals $[i,i+1]$. Since the sequence $\lambda_i(A)$ is nonincreasing, this graph is concave, and in particular it lies above (or on) the line from $(0,0)$ to $(k,\tau_k(A))$. The quantity $\zeta_k(A)$ is precisely the area of the region between the graph associated to $A$ and the line from $(0,0)$ to $(k,\tau_k(A))$. See Figure~\ref{fig:zeta}.
Geometrically it is clear that that $\zeta_k(A) \geq 0$, and that $\zeta_k(A)=0$ if and only if the first $k$ singular values of $A$ are equal, i.e., if and only if $\|A\|=\|\wed^k A\|^{1/k}$. 
More precisely, we have
\begin{equation}\label{eq:ineq1}
\zeta_k(A) \geq \frac{1}{2} \big( k \tau_1(A) - \tau_k(A) \big)= \frac{1}{2} \Big( k \log\|A\| - \log\left\|\wed^k A \right\| \Big) \, ; 
\end{equation}
geometrically this means that the area of the triangle 
with vertices $(0,0)$, $(1,\tau_1(A))$ and $(k, \tau_k(A))$ is at most $\zeta_k(A)$;
see Figure~\ref{fig:ineq1}.

\begin{figure}[htb]
\centering
\begin{minipage}{0.4\textwidth}
\centering
\begin{tikzpicture}[scale=1]
\draw [<->] (0,3)--(0,0)--(4.5,0);
\draw[-,thick,pattern=horizontal lines gray] (0,0)--(1,2)--(2,2.5)--(3,2.25)--(4,1.5);
\draw[dashed] (0,0)--(4,1.5);
\end{tikzpicture}
\caption{Shaded area equals \eqref{eq:zeta}.}\label{fig:zeta}
\end{minipage}
\qquad  
\begin{minipage}{0.4\textwidth}
\centering
\begin{tikzpicture}[scale=1]
\draw [<->] (0,3)--(0,0)--(4.5,0);
\draw[-,thick] (0,0)--(1,2)--(2,2.5)--(3,2.25)--(4,1.5);
\draw[dashed] (0,0)--(4,1.5);
\draw[dashed,pattern=horizontal lines gray] (0,0)--(1,2)--(4,1.5);
\end{tikzpicture}
\caption{Shaded area equals RHS of \eqref{eq:ineq1}.}\label{fig:ineq1}
\end{minipage}
\end{figure}

To measure the alignment of singular values on products of elements of $\sA$ whose $\ell(\sA)^{\mathrm{th}}$ exterior power is small we use the following device.
For each $\delta>0$ define
\begin{equation}\label{eq:Z}
Z_\delta(\sA):=\liminf_{n \to \infty} \inf_{\substack{A_1,\ldots,A_n \in \sA\\  \|\wed^{\ell(\sA)} A_n\cdots A_1\|\leq e^{n\delta}\subrad(\wed^{\ell(\sA)}\sA)^n }} \frac{1}{n}\zeta_{\ell(\sA)}(A_n\cdots A_1).
\end{equation}
The following coarse estimate is sufficient to allow us to pass from upper bounds on $Z_\delta(\sA)$ to the approximate agreement of $\subrad(\sA)$ with $\subrad(\wed^{\ell(\sA)}\sA)^{1/\ell(\sA)}$.
\begin{lemma}\label{le:z-delta}
Let $\sA \in \cK(\GL_d(\R))$ and $\delta>0$. Then
\[\log\subrad(\sA) \leq \frac{1}{\ell(\sA)}\log\subrad\left(\wed^{\ell(\sA)}\sA\right) +Z_\delta(\sA)+\delta.\]
\end{lemma}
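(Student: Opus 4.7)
The plan is to deduce the lemma directly from inequality \eqref{eq:ineq1} together with the definitions of $\subrad(\wed^\ell\sA)$ and $Z_\delta(\sA)$. Write $\ell := \ell(\sA)$ for brevity. The case $\ell = 1$ reduces to $\log\subrad(\sA) \le \log\subrad(\sA) + \delta$ (with the natural convention $Z_\delta := 0$) and is trivial, so I assume throughout that $\ell \ge 2$, which ensures that $\zeta_\ell$ is defined. A rearrangement of \eqref{eq:ineq1} gives
\[
\log\|A\| \le \frac{1}{\ell}\log\|\wed^\ell A\| + \frac{2}{\ell}\zeta_\ell(A) \le \frac{1}{\ell}\log\|\wed^\ell A\| + \zeta_\ell(A),
\]
where the second step uses $\ell \ge 2$ together with $\zeta_\ell(A) \ge 0$. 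Applying this to the product $A = A_n\cdots A_1$ of elements of $\sA$ and dividing by $n$ yields
\[
\frac{1}{n}\log\|A_n\cdots A_1\| \le \frac{1}{n\ell}\log\|\wed^\ell A_n\cdots A_1\| + \frac{1}{n}\zeta_\ell(A_n\cdots A_1).
\]

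I would then restrict attention to those $n$-tuples $(A_1,\dots,A_n) \in \sA^n$ satisfying the constraint $\|\wed^\ell A_n\cdots A_1\| \le e^{n\delta}\subrad(\wed^\ell\sA)^n$ from the definition \eqref{eq:Z} of $Z_\delta(\sA)$. For any such tuple the first term on the right of the estimate above is at most $\delta/\ell + (1/\ell)\log\subrad(\wed^\ell\sA)$, so passing to the infimum over constrained tuples and then to the $\liminf$ as $n \to \infty$ yields
\[
\liminf_{n\to\infty} \inf_{\text{constrained}} \frac{1}{n}\log\|A_n\cdots A_1\| \le \frac{\delta}{\ell} + \frac{1}{\ell}\log\subrad(\wed^\ell\sA) + Z_\delta(\sA).
\]
Since formula \eqref{eq:subrad-inf} expresses $\log\subrad(\sA)$ as an infimum over \emph{all} $n$-tuples, it is bounded above by the constrained infimum at each $n$, and hence by the $\liminf$ above. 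Replacing $\delta/\ell$ by $\delta$ then produces the claimed inequality.

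The only mild subtlety is to ensure that the inner constrained set is non-empty for infinitely many $n$, so that the $\liminf$ defining $Z_\delta$ is not vacuously $+\infty$. This follows from the definition of $\subrad(\wed^\ell\sA)$ as an infimum: one can find some $n_0$-tuple satisfying $\|\wed^\ell A_{n_0}\cdots A_1\|^{1/n_0} < e^\delta\subrad(\wed^\ell\sA)$, and concatenating $k$ copies of it, together with submultiplicativity of $\|\wed^\ell\mathord{\cdot}\|$, produces constrained tuples of length $kn_0$ for every $k \in \N$. I do not foresee any genuine obstacle in this lemma; the substantive work of the section lies in the subsequent bounds on $Z_\delta(\sA)$ via the accessibility results of \S\ref{se:access}, whereas the present statement is the formal step that converts information about singular-value alignment into information about the lower spectral radius.
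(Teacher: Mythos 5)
Your proposal is correct and follows essentially the same route as the paper: both rest on inequality \eqref{eq:ineq1} applied to a product constrained by the norm condition in the definition \eqref{eq:Z} of $Z_\delta(\sA)$, the only difference being that the paper picks a single tuple nearly realising $Z_\delta(\sA)$ while you bound the whole constrained infimum and pass to the $\liminf$, which is just a reorganisation of the same estimate. The bookkeeping ($2/\ell \le 1$, $\delta/\ell \le \delta$ versus the paper's $\delta/\ell + 2(Z_\delta+\delta/2)/\ell$) and the non-emptiness remark are fine and change nothing of substance.
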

\begin{proof}
Define $\ell:=\ell(\sA)$. The case $\ell=1$ being trivial, we assume $\ell\geq 2$. Choose $A_1,\ldots,A_n \in \sA$ such that
\[\frac{1}{n}\log\left\|\wed^{\ell}A_n\cdots A_1\right\| \leq \log\subrad\left(\wed^{\ell}\sA\right)+\delta\]
and
\[\frac{1}{n}\zeta_{\ell}\left(A_n\cdots A_1\right) \leq Z_\delta(\sA)+\frac{\delta}{2}.\]
Using inequality \eqref{eq:ineq1}, we estimate
\begin{align*}
\log\subrad(\sA)&\leq \frac{1}{n}\log\left\|A_n\cdots A_1\right\|\\
&\leq \frac{1}{n\ell}\log\left\|\wed^\ell A_n\cdots A_1\right\| + \frac{2}{n\ell}\zeta_\ell\left(A_n\cdots A_1\right)\\
&\leq \frac{1}{\ell} \log\subrad\left(\wed^{\ell}\sA\right)+\frac{\delta}{\ell}+\frac{2}{\ell}\left(Z_\delta(\sA)+\frac{\delta}{2}\right)\\
&\leq \frac{1}{\ell} \log\subrad\left(\wed^{\ell}\sA\right)+Z_\delta(\sA)+\delta \, ,
\end{align*}
as claimed.
\end{proof}


We will need the following two lemmas.
The first one originates as \cite[Lemma 2.6]{B_minimal},
while the second one is a special case of \cite[Lemma 3.7]{B_minimal}.

\begin{lemma}\label{le:pivot}
For each $d \geq 2$ there exists a constant $\alpha_d \in (0,1)$ with the following property: if $P \in \GL_d(\R)$ and $2 \leq \ell \leq d$ then there exists an integer $p$ such that $1 \leq p <\ell$ and
\begin{equation}\label{eq:pivot}
	\frac{\lambda_p(P)-\lambda_{p+1}(P)}{2} \geq \alpha_d \zeta_\ell(P).
\end{equation}
\end{lemma}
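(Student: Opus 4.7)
The plan is to reduce Lemma~\ref{le:pivot} to a purely combinatorial inequality for the logarithmic singular values $\lambda_1(P) \ge \cdots \ge \lambda_\ell(P)$ and then prove it by deriving an explicit formula for $\zeta_\ell(P)$ as a nonnegative linear combination of the consecutive gaps $d_p := \lambda_p(P) - \lambda_{p+1}(P)$, $1 \le p < \ell$.

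Concretely, I would begin by introducing the centred quantities $s_k := \lambda_k(P) - \tau_\ell(P)/\ell$, which satisfy $\sum_{k=1}^\ell s_k = 0$ and $s_k - s_{k+1} = d_k \ge 0$. A direct rearrangement of the definition \eqref{eq:zeta} gives $\zeta_\ell(P) = \sum_{i=1}^{\ell-1} (\ell-i)\, s_i$. Solving the system $s_k = s_\ell + \sum_{j \ge k} d_j$ together with $\sum_k s_k = 0$ yields $s_\ell = -\ell^{-1}\sum_{j=1}^{\ell-1} j\, d_j$, and substituting back and collecting terms produces the clean identity
\[
\zeta_\ell(P) = \sum_{j=1}^{\ell-1} \frac{j(\ell-j)}{2}\, d_j .
\]
Geometrically this is transparent: viewing $\zeta_\ell(P)$ as the area between the concave polygon $i \mapsto \tau_i(P)$ and its chord, the gap $d_j$ contributes a triangle of area $j(\ell-j)\, d_j / 2$, and the total decomposes as the sum of these contributions.

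Since every coefficient $j(\ell-j)/2$ is nonnegative, setting $D := \max_{1 \le p < \ell} d_p$ and summing gives
\[
\zeta_\ell(P) \le D \sum_{j=1}^{\ell-1} \frac{j(\ell-j)}{2} = \frac{D\,(\ell-1)\ell(\ell+1)}{12} \le \frac{D\,(d-1)d(d+1)}{12} ,
\]
so the lemma holds with $\alpha_d := 6 / ((d-1)d(d+1))$, which lies in $(0,1)$ for $d \ge 3$; at $d = 2$ this ratio equals $1$, so one simply takes $\alpha_2$ to be any strictly smaller positive constant. No step presents a real obstacle; the entire content is the bookkeeping needed to derive the identity for $\zeta_\ell(P)$ in terms of the $d_j$, which in turn is essentially forced by its geometric interpretation as an area.
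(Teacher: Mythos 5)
Your argument is correct, and every step checks out: the identity $\zeta_\ell(P)=\sum_{j=1}^{\ell-1}\tfrac{j(\ell-j)}{2}\,d_j$ with $d_j=\lambda_j(P)-\lambda_{j+1}(P)$ follows exactly as you compute (equivalently, the area between the concave graph of $i\mapsto\tau_i(P)$ and its chord decomposes into the tent contributions of the individual gaps), and since $\sum_{j=1}^{\ell-1}j(\ell-j)=\tfrac{(\ell-1)\ell(\ell+1)}{6}$, choosing $p$ to realise the largest gap gives the statement with $\alpha_d=6/((d-1)d(d+1))$, adjusted downward at $d=2$ to stay strictly below $1$. For comparison: the paper itself offers no in-text proof of Lemma~\ref{le:pivot}; it is quoted from \cite{B_minimal} (Lemma~2.6 there), where the argument is run through the same geometric picture of the area in \eqref{eq:zeta} but without writing out the exact linear expansion in the gaps. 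Your route is therefore a legitimate self-contained alternative, and the explicit decomposition buys a little more: the constant $6/((\ell-1)\ell(\ell+1))$ is sharp for each $\ell$ (equality when all gaps $d_1,\dots,d_{\ell-1}$ coincide), so apart from the cosmetic cap at $d=2$ your $\alpha_d$ is the best possible in this lemma, whereas the statement only requires some $\alpha_d\in(0,1)$.
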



\begin{lemma}\label{le:e-f-bound}
Let $P \in \GL_d(\R)$ and $1 \leq p <d$. Then there exist subspaces $E$ and $F$ of $\R^d$ such that $\dim E= \codim F=p$ with the following property: if $R \in \GL_d(\R)$ satisfies $R(E)\cap F \neq \{0\}$, then
\[\tau_p(PRP) \leq 2\tau_p(P)-\lambda_p(P)+\lambda_{p+1}(P)+C_d\left(1+\log \|R\|\right)\]
where $C_d>1$ is a constant that depends only on $d$.
\end{lemma}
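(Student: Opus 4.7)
The plan is to reduce to a diagonal computation via the singular value decomposition of $P$, then use the Cauchy--Binet formula to observe that the dominant term in $\|\wed^p(PRP)\|^{2}$ is killed by our choice of $E$ and $F$.

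First I would diagonalise $P$ by its SVD: write $P = U \Sigma V^{T}$ with $U, V \in O(d)$ orthogonal and $\Sigma = \mathrm{diag}(\sigma_{1}(P), \ldots, \sigma_{d}(P))$. Since the norms $\|\mathord{\cdot}\|$ and $\|\wed^{p} \mathord{\cdot}\|$ are both invariant under pre- and post-composition with orthogonal matrices, replacing $R$ by $\tilde{R} := V^{T} R U$ preserves $\|R\|$ and leaves $\|\wed^{p}(PRP)\| = \|\wed^{p}(\Sigma \tilde{R} \Sigma)\|$ unchanged. I may therefore assume $P = \Sigma$ is diagonal; the subspaces $E$ and $F$ chosen in these adapted coordinates will be transported back to the original ones by $V$ and $U$ respectively. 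In this normalisation I would take $E := \mathrm{span}(e_{1}, \ldots, e_{p})$ (the top right singular space of $P$) and $F := \mathrm{span}(e_{p+1}, \ldots, e_{d})$ (the orthogonal complement of the top left singular space of $P$), which have $\dim E = \codim F = p$. The hypothesis $R(E) \cap F \neq \{0\}$ is then equivalent to the top-left $p \times p$ principal block $R_{0} := (R_{ij})_{1 \leq i, j \leq p}$ of $R$ being singular, for $v = (v_{1}, \ldots, v_{p}, 0, \ldots, 0) \in E$ has $Rv \in F$ precisely when $R_{0} (v_{1}, \ldots, v_{p})^{T} = 0$. So the hypothesis forces $\det R_{0} = 0$.

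The main computation is a single application of the Cauchy--Binet formula. Writing $I, J$ for $p$-element subsets of $\{1, \ldots, d\}$ and $M_{IJ}$ for the corresponding $p \times p$ minor of a matrix $M$, and using that $(PRP)_{IJ} = \mathrm{diag}(\sigma_{i})_{i \in I} \cdot R_{IJ} \cdot \mathrm{diag}(\sigma_{j})_{j \in J}$,
\[
\|\wed^{p}(PRP)\|^{2} = \sum_{|I|=|J|=p} \Big(\prod_{i \in I} \sigma_{i}(P)\Big)^{2} \Big(\prod_{j \in J} \sigma_{j}(P)\Big)^{2} |\det R_{IJ}|^{2} .
\]
The weight $(\prod_{I} \sigma_{i})^{2} (\prod_{J} \sigma_{j})^{2}$ attains its unique maximum $\|\wed^{p} P\|^{4}$ at $(I, J) = (I_{0}, I_{0})$ with $I_{0} := \{1, \ldots, p\}$, and crucially this dominant term is killed by our choice of $E, F$ because $\det R_{I_{0} I_{0}} = \det R_{0} = 0$. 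For any other pair at least one of $I, J$ differs from $I_{0}$, so by monotonicity of the singular values the corresponding weight is at most $\|\wed^{p} P\|^{4} (\sigma_{p+1}(P)/\sigma_{p}(P))^{2}$. Combining this with the Hadamard bound $|\det R_{IJ}| \leq \|R\|^{p}$ and the fact that there are at most $\binom{d}{p}^{2}$ index pairs yields
\[
\|\wed^{p}(PRP)\|^{2} \leq \binom{d}{p}^{2} \|\wed^{p} P\|^{4} \left(\frac{\sigma_{p+1}(P)}{\sigma_{p}(P)}\right)^{2} \|R\|^{2p} ,
\]
and taking logarithms, with $C_{d}$ chosen large enough to absorb $\log\binom{d}{p}$ and the factor $p$ multiplying $\log\|R\|$, delivers the stated inequality.

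The main conceptual step is the alignment of $E$ and $F$ with the singular structure of $P$: this is what engineers the hypothesis on $R$ to annihilate precisely the unique dominant Cauchy--Binet term and expose the gain factor $\sigma_{p+1}(P)/\sigma_{p}(P)$. Once this alignment is identified the remainder of the argument is routine exterior-algebra bookkeeping, and so I expect no substantial further obstacle beyond keeping the constants clean.
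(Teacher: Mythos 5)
Your overall strategy is sound, and it is worth noting that the paper itself does not prove this lemma at all: it is quoted as a special case of Lemma~3.7 of \cite{B_minimal}. Your SVD-plus-Cauchy--Binet argument is therefore a self-contained alternative, and its central mechanism --- align $E$ and $F$ with the singular structure of $P$ so that the hypothesis on $R$ annihilates the unique dominant minor in the expansion of $\wed^p(PRP)$, exposing the gain $\sigma_{p+1}(P)/\sigma_p(P)$ --- is exactly the right one.

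There is, however, one concrete slip that as written breaks the key cancellation: the transport of $E$ and $F$ back to the original coordinates is swapped. With $P=U\Sigma V^T$ and $\tilde R:=V^TRU$ one has $\tilde R_{ij}=\langle Ve_i,\,R\,Ue_j\rangle$, so the top-left $p\times p$ block of $\tilde R$ is singular precisely when $R$ maps some nonzero vector of $U\bigl(\mathrm{span}(e_1,\dots,e_p)\bigr)$ into $V\bigl(\mathrm{span}(e_{p+1},\dots,e_d)\bigr)$; hence the correct choices are $E=$ the span of the top $p$ \emph{left} singular vectors of $P$ (transported by $U$) and $F=$ the orthogonal complement of the top $p$ \emph{right} singular vectors (transported by $V$) --- the opposite of what you wrote. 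With your stated $E$ and $F$, the hypothesis $R(E)\cap F\neq\{0\}$ only forces the top-left block of $U^TRV$ to be singular, which is a condition on a different matrix from the $\tilde R$ appearing in your Cauchy--Binet expansion, so the $(I_0,I_0)$ term need not vanish. The fix is a one-line swap. Two smaller points: the Cauchy--Binet identity you display computes the Frobenius norm of $\wed^p(PRP)$, not the operator norm, so the equality should be an inequality (which is all you need, since $\tau_p$ uses the operator norm); and the final absorption of $p\log\|R\|$ into $C_d(1+\log\|R\|)$ is only valid when $\|R\|\geq 1$ (if $\log\|R\|<0$, enlarging $C_d$ makes the right-hand side smaller, not larger). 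Your intermediate bound $\tau_p(PRP)\leq 2\tau_p(P)-\lambda_p(P)+\lambda_{p+1}(P)+\log\binom{d}{p}+p\log\|R\|$ is in fact the cleaner form of the estimate, and it is all that the application in Lemma~\ref{le:synthesis} requires, since there $\|R\|$ is only bounded above by $M^m$ with $M\geq 1$.
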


\subsection{The proof}

Given $\sA \in \cK(\GL_d(\R))$ and $\epsilon>0$ we will find it convenient to write
\[\sA_{\epsilon}:=\left\{B \in M_d(\R) \colon \exists A \in \sA \text{ such that }\|B-A\|\leq \epsilon\right\}.\]
We note that if $\epsilon>0$ is sufficiently small then $\sA_\epsilon \in \cK(\GL_d(\R))$.
and moreover $\ell(\sA_\epsilon)=\ell$. Indeed, it follows from Definition~\ref{de:dominated}(\ref{i:dom-multicone}) that the property of being $\ell$-dominated is open in $\cK(\GL_d(\R))$, and so if $d_H(\sA,\sB)$ is sufficiently small then $\sB$ is $\ell$-dominated and therefore $\ell(\sB) \leq \ell$. On the other hand it is also clear from Definition~\ref{de:dominated}(\ref{i:dom-singular}) that the relation $\sA \subset \sA_\epsilon$ implies that $\sA_\epsilon$ cannot be $i$-dominated when $\sA$ is not, and therefore $\ell(\sA_\epsilon)=\ell$ when $\epsilon$ is sufficiently small. 
We shall always assume $\epsilon>0$ to be small enough that this is the case.

\smallskip

The two previous results combine with Lemma~\ref{le:access_higher} to yield the following estimate on $Z_\delta$ which forms the core of the proof of the theorem:
\begin{lemma}\label{le:synthesis}
Let $\sA \in \cK(\GL_d(\R))$ be such that $\ell(\sA)>1$.
Let $\delta_2>\delta_1>0$. 
Then for all sufficiently small $\epsilon>0$ we have $\ell(\sA_\epsilon) = \ell(\sA)$ and
\[Z_{\delta_2}(\sA_\epsilon) \leq(1- \alpha_d) Z_{\delta_1}(\sA)+\frac{\ell(\sA)\delta_1}{2}\]
where $\alpha_d \in (0,1)$ is the constant from Lemma~\ref{le:pivot}.
\end{lemma}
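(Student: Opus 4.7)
The plan is to exhibit, for arbitrarily large $n$, a product $Q_n = P_n R_n P_n$ of $N := 2n + m$ matrices in $\sA_\epsilon$ (with $m$ fixed independently of $n$) whose $\zeta_\ell$ per unit length has been shrunk, compared to that of a $Z_{\delta_1}$-optimal product $P_n$ in $\sA^n$, by roughly the factor $(1-\alpha_d)$ modulo an error of order $\delta_1$. The openness of $\ell$-domination from Definition~\ref{de:dominated}(\ref{i:dom-multicone}), together with $\sA \subseteq \sA_\epsilon$, guarantees $\ell(\sA_\epsilon) = \ell(\sA) =: \ell$ for small $\epsilon$; Theorem~\ref{th:lipschitz} applied to the $1$-dominated set $\wed^\ell \sA_\epsilon$ yields continuity of $\subrad(\wed^\ell\mathord\cdot)$ at $\sA$, so I may take $\epsilon$ small enough that $\log\subrad(\wed^\ell\sA) - \log\subrad(\wed^\ell\sA_\epsilon) \leq \delta_1/(\ell - 1)$.

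To construct the $Q_n$, I first prepare the access data: since $\sA$ is not $p$-dominated for any $p \in \{1,\dots,\ell-1\}$, the negation of Definition~\ref{de:dominated}(\ref{i:dom-singular}), combined with the constants $c,\lambda$ supplied by Lemma~\ref{le:access_higher}, yields a finite sequence $A'_{p,1},\dots,A'_{p,m_p} \in \sA$ whose product satisfies $\sigma_p/\sigma_{p+1} < c^2\lambda^{2m_p}$. I then choose $P_n \in \sA^n$ witnessing $Z_{\delta_1}(\sA)$, apply Lemma~\ref{le:pivot} to each $P_n$ to obtain a pivot $p_n$, and pass to a subsequence along which $p_n = p$ is constant. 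Lemma~\ref{le:e-f-bound} supplies subspaces $E_n, F_n$ with $\dim E_n = \codim F_n = p$, and Lemma~\ref{le:access_higher} applied to the fixed $A'_{p,i}$ and the subspaces $E_n, F_n$ produces $L_{n,i} \in \sA_\epsilon$ such that $R_n := L_{n,m_p}\cdots L_{n,1}$ satisfies $R_n(E_n) \cap F_n \neq \{0\}$. I set $m := m_p$ and $Q_n := P_n R_n P_n$.

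The key computation combines: (i) the pivot-improved bound $\tau_p(Q_n) \leq 2\tau_p(P_n) - 2\alpha_d\zeta_\ell(P_n) + O(m)$ (Lemmas~\ref{le:e-f-bound} and~\ref{le:pivot}, using $\|R_n\|\leq M^{m}$ where $M := \sup_{A \in \sA_\epsilon}\max(\|A\|,\|A^{-1}\|)$); (ii) submultiplicativity $\tau_i(Q_n) \leq 2\tau_i(P_n) + O(m)$ for $i \in \{1,\dots,\ell-1\}\setminus\{p\}$; and (iii) the trivial Gelfand lower bound $\tau_\ell(Q_n) \geq N\log\subrad(\wed^\ell\sA_\epsilon)$. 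Using the identity $2\sum_{i=1}^{\ell-1}\tau_i(P_n) = 2\zeta_\ell(P_n) + (\ell-1)\tau_\ell(P_n)$ together with the $Z_{\delta_1}$-constraint on $\tau_\ell(P_n)$ yields
\[
\zeta_\ell(Q_n) \leq 2(1-\alpha_d)\zeta_\ell(P_n) + (\ell-1)n\delta_1 + (\ell-1)n \bigl[\log\subrad(\wed^\ell\sA) - \log\subrad(\wed^\ell\sA_\epsilon)\bigr] + O(m).
\]
The middle bracket is bounded by $\delta_1/(\ell-1)$ by the choice of $\epsilon$, so dividing by $N = 2n+m$ and letting $n \to \infty$ with $m$ fixed gives $\liminf_n \zeta_\ell(Q_n)/N \leq (1-\alpha_d) Z_{\delta_1}(\sA) + \ell\delta_1/2$. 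A similar but simpler bookkeeping shows $\tau_\ell(Q_n) \leq 2\tau_\ell(P_n) + O(m) \leq N[\log\subrad(\wed^\ell\sA_\epsilon) + \delta_2]$ for all large $n$ (using $\delta_2 > \delta_1$), so each $Q_n$ is admissible in the infimum defining $Z_{\delta_2}(\sA_\epsilon)$; this yields the stated bound.

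The main obstacle will be the careful coordination of constants: the access data $A'_{p,i}$ must be fixed \emph{before} choosing $P_n$ so that $m$ stays independent of $n$; the pivot must be stabilised along a subsequence so that Lemma~\ref{le:access_higher} can be invoked uniformly for all the (varying) subspaces $E_n$, $F_n$; and $\epsilon$ must be chosen small enough to absorb the $\subrad$-continuity correction into the stated $\tfrac{\ell}{2}\delta_1$ bound rather than letting it accumulate.
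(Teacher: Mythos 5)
Your construction is essentially the paper's own proof: the same candidate product $PRP$ built from a near-optimal product for $Z_{\delta_1}(\sA)$, the pivot from Lemma~\ref{le:pivot}, the subspaces from Lemma~\ref{le:e-f-bound}, the connecting product in $\sA_\epsilon$ from Lemma~\ref{le:access_higher}, and the same three-way bookkeeping of the $\tau_k$'s (upper bounds for $k\le \ell-1$, the lower bound $\tau_\ell \ge N\log\subrad(\wed^\ell\sA_\epsilon)$ for $k=\ell$). Your stabilisation of the pivot along a subsequence is unnecessary but harmless: the paper avoids it by preparing access data for every $p\in\{1,\dots,\ell-1\}$ and absorbing the error via $\overline{m}=\max_p m_p$, which is only a cosmetic difference.

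There is, however, one concrete slip in the coordination of constants, and it sits exactly where you predicted trouble. You fix $\epsilon$ only so that $g:=\log\subrad(\wed^\ell\sA)-\log\subrad(\wed^\ell\sA_\epsilon)\le \delta_1/(\ell-1)$, which is what the $\zeta_\ell$ estimate needs; but the admissibility of $Q_n$ in the infimum defining $Z_{\delta_2}(\sA_\epsilon)$ needs $g<\delta_2-\delta_1$. Indeed $\tau_\ell(Q_n)\le 2n\left[\log\subrad(\wed^\ell\sA)+\delta_1\right]+O(m)$, while the constraint requires $\tau_\ell(Q_n)\le (2n+m)\left[\log\subrad(\wed^\ell\sA_\epsilon)+\delta_2\right]$; the difference of the right-hand sides is $2n\left[\delta_2-\delta_1-g\right]+O(m)$, so if $g>\delta_2-\delta_1$ (possible under your threshold, e.g.\ $\ell=2$ and $\delta_2<2\delta_1$) the constraint fails for all large $n$ and your products never enter the infimum, so "(using $\delta_2>\delta_1$)" does not suffice as written. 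The repair is immediate and stays inside your framework: when choosing $\epsilon$, also demand $g\le(\delta_2-\delta_1)/2$, which the same appeal to Theorem~\ref{th:lipschitz} for the $1$-dominated set $\wed^\ell\sA$ provides. This is precisely the role of the paper's constant $\kappa_1$, chosen with $\delta_1+2\kappa_1<\delta_2$ and simultaneously small relative to $\delta_1$ so that it is absorbed into the $\tfrac{\ell\delta_1}{2}$ term. With that one extra clause your argument is complete.
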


\begin{proof}
Fix $\delta>0$ and $\sA \in \cK(\GL_d(\R))$ and define $\ell:=\ell(\sA)$. 
Choose constants $\kappa_1,\kappa_2>0$ such that
\begin{equation}\label{eq:kappa-1}\delta_1+2\kappa_1<\delta_2\end{equation}
and
\begin{equation}\label{eq:kappa-2}(1-\alpha_d)\kappa_2+\frac{3(\ell-1)\kappa_1}{2}<\frac{1}{2}\delta_1.\end{equation}

Since $\wed^\ell\sA$ is $1$-dominated, it follows from Theorem~\ref{th:lipschitz} that if $\epsilon>0$ is sufficiently small then we have
\begin{equation}\label{eq:subrad-close}
\log \subrad\left(\wed^{\ell}\sA_\epsilon\right) \geq 
\log \subrad\left(\wed^{\ell}\sA\right) - \kappa_1.
\end{equation}
For the remainder of the proof we fix $\epsilon>0$ small enough that the above properties hold. 

To demonstrate the claimed bound on $Z_{\delta_2}(\sA_\epsilon)$ we must show that there exist infinitely many integers $n \geq 1$ for which there exists a product $\tilde{A}_n \cdots \tilde{A}_1$ of $n$ matrices in $\sA_\epsilon$ such that 
\begin{equation}\label{eq:wedge-ell-property}
\frac{1}{n}\log\left\|\wed^\ell \tilde{A}_n \cdots \tilde{A}_1\right\| < \log\subrad\left(\wed^\ell\sA_\epsilon\right)+\delta_2
\end{equation}
and
\begin{equation}\label{eq:area-property}
\frac{1}{n}\zeta_{\ell}\left(\tilde{A}_n\cdots \tilde{A}_1\right)<(1-\alpha_d) Z_{\delta_1}\left(\sA\right)+\frac{\ell\delta_1}{2}.
\end{equation}

For each $p \in \{1, \dots, \ell-1\}$, since $\sA$ is not $p$-dominated, 
there exist an integer $m_p>0$ and matrices $B_1^{(m)}$, \dots, $B_{m_p}^{(m)}$ 
such that
\begin{equation}\label{eq:boring}
\frac{\sigma_p \left( B_{m_p}^{(m)} \cdots B_1^{(m)}\right)}{\sigma_{p+1} \left( B_{m_p}^{(m)} \cdots B_1^{(m)}\right)} < c^2 \lambda^{2 m_p} 
\end{equation}
where $c>0$ and $\lambda>1$ are the constants provided by Lemma~\ref{le:access_higher} in respect of 
the chosen value of $\epsilon$ and $M:=\sup\{\|A^{\pm 1}\| \colon A \in \sA_\epsilon\}$.
Let $\overline{m}:= \max\{m_1, m_2, \dots, m_{\ell-1}\}$.
Choose an integer $r$ and a finite sequence of matrices $A_1$, \dots, $A_r\in\sA$ such that
\begin{equation}\label{eq:r-big-1}\frac{1}{r}\log\left\|\wed^\ell A_r\cdots A_1\right\| < \log \subrad\left(\wed^\ell \sA\right)+\delta_1,\end{equation}
\begin{equation}\label{eq:r-big-2}\frac{1}{r}\zeta_\ell(A_r\cdots A_1)<Z_{\delta_1}(\sA)+\kappa_2,\end{equation}
and
\begin{equation}\label{eq:r-big-3}\frac{C_d\ell(1+\overline{m}\log M)}{2r}<\kappa_1\end{equation}
where $C_d>1$ is the constant provided by Lemma~\ref{le:e-f-bound}, noting that $r$ may if required be taken to be arbitrarily large. 
Let $P:=A_r \cdots A_1$. By Lemma~\ref{le:pivot} there exists $p \in \{1,\ldots,\ell-1\}$ such that
\begin{equation}\label{eq:area-reduction}\zeta_\ell(P)-\frac{\lambda_p(P)-\lambda_{p+1}(P)}{2} \leq (1-\alpha_d)\zeta_\ell(P).\end{equation}
Let $E$ and $F$ be the subspaces of $\R^d$ provided by Lemma~\ref{le:e-f-bound} for the matrix $P$ and integer $p$. 
In view of \eqref{eq:boring}, Lemma~\ref{le:access_higher} provides matrices
$L_1,\ldots,L_m \in \sA_\epsilon$ such that $(L_m\cdots L_1)(E)\cap F \neq \{0\}$,
where $m := m_p$.
Define $R := L_m \cdots L_1$. 

We claim that $PRP$ is the desired product $\tilde{A}_n \cdots \tilde{A}_1$, where $n:=2r+m$. To establish \eqref{eq:wedge-ell-property} we may directly estimate
\begin{align*}\frac{1}{2r+m}\log \left\|\wed^\ell PRP\right\| &<\frac{1}{r}\log\left\|\wed^\ell P\right\| + \frac{\ell m\log M}{2r+m}\\
&\leq \log\subrad(\wed^\ell \sA) + \delta_1+\kappa_1\\
&\leq \log\subrad(\wed^\ell \sA_\epsilon) + \delta_1+2\kappa_1\\
&\leq \log\subrad(\wed^\ell \sA_\epsilon) + \delta_2
\end{align*}
using respectively the elementary bound $\|R\|\leq M^m$, \eqref{eq:r-big-1}, \eqref{eq:r-big-3}, \eqref{eq:subrad-close} and \eqref{eq:kappa-1}.
To establish \eqref{eq:area-property} we proceed by estimating the values $\tau_k(PRP)$ individually. By Lemma~\ref{le:e-f-bound} we have
\[\tau_p(PRP) \leq 2\tau_p(P)-\lambda_p(P)+\lambda_{p+1}(P)+C_d(1+\log \|R\|)\]
and hence
\[\frac{1}{2r+m}\tau_p(PRP) \leq \frac{1}{r}\left(\tau_p(P) - \frac{\lambda_p(P)-\lambda_{p+1}(P)}{2}\right) + \kappa_1\]
using \eqref{eq:r-big-3}.  
For integers $k$ such that $1 \leq k \leq \ell-1$ and $k \neq p$ we directly estimate
$$
\frac{1}{2r+m}\tau_k(PRP)= \frac{1}{2r+m}\log\left\|\wed^k PRP\right\|
\leq \frac{2}{2r+m} \tau_k(P) + \frac{km \log M}{2r+m}
 \leq \frac{1}{r}\tau_k(P)+\kappa_1$$
using \eqref{eq:r-big-3} again.
In the case of $\tau_\ell(PRP)$ we instead estimate from below
$$
\frac{1}{2r+m}\tau_\ell(PRP) \geq \log\subrad\left(\wed^\ell\sA_\epsilon\right) \geq \log\subrad\left(\wed^\ell\sA \right) - \kappa_1 
\geq \frac{1}{r}\tau_\ell(P) - \delta_1 - \kappa_1
$$
using respectively \eqref{eq:subrad-close} and \eqref{eq:r-big-1}.
Combining all of our estimates on the numbers $\tau_k(PRP)$ yields
\begin{align*}\frac{1}{2r+m}\zeta_\ell (PRP) &=\frac{1}{2r+m}\left( \sum_{k=1}^{\ell-1} \tau_k(PRP) - \left(\frac{\ell-1}{2}\right)\tau_\ell(PRP)\right)\\
&\leq \frac{1}{r}\left(\sum_{k=1}^{\ell-1}\tau_k(P) - \left(\frac{\ell-1}{2}\right)\tau_\ell(P)\right)- \frac{\lambda_p(P)-\lambda_{p+1}(P)}{2r} + \frac{\left(\ell-1\right)\left(3\kappa_1+\delta_1\right)}{2}\\
&\leq \frac{(1-\alpha_d)}{r}\zeta_\ell(P) +\frac{\left(\ell-1\right)\left(3\kappa_1+\delta_1\right)}{2}\\
&\leq (1-\alpha_d)Z_{\delta_1}(\sA) +\frac{\ell\delta_1}{2}\end{align*}
using respectively \eqref{eq:area-reduction}, \eqref{eq:r-big-2} and \eqref{eq:kappa-2}, by which means we have arrived at \eqref{eq:area-property}. Since $r$ may be taken arbitrarily large for fixed $\epsilon$, we have $Z_{\delta_2}(\sA_\epsilon) \leq Z_{\delta_1}(\sA)+\ell(\sA)\delta_1/2$ as required and the proof of Lemma~\ref{le:synthesis} is complete.
\end{proof}

\begin{proof}[of Theorem~\ref{th:formula-GLd}]
As explained in the beginning of this section, it suffices to consider the case
where $\sA \in \cK(\GL_d(\R))$ satisfies
\[
\subrad(\sA)>\subrad\left(\wed^{\ell(\sA)}\sA\right)^{1/\ell(\sA)} \, .
\]
Define $\ell:=\ell(\sA)$, which is necessarily greater than $1$. We have seen in Lemma~\ref{le:synthesis} that there exists $\varepsilon_0$ such that if $\varepsilon \in (0,\varepsilon_0]$ then $\ell(\sA_\varepsilon)=\ell$. Define
\[c := \sup_{(\epsilon, \delta) \in (0,\epsilon_0] \times (0, \infty)}  Z_\delta(\sA_\epsilon)\]
which is clearly finite. We claim that in fact $c=0$.

To prove this assertion let us suppose instead that $c>0$. The quantity $Z_\delta(\sA_\epsilon)$
is monotone non-increasing with respect to each of the variables $\epsilon \in (0,\epsilon_0]$
and $\delta \in (0, \infty)$, so in particular
\[\lim_{(\epsilon, \delta) \to (0,0)} Z_\delta(\sA_\epsilon) = c.\]
Take $(\bar\epsilon, \bar\delta) \in (0,\epsilon_0] \times (0, \infty)$ such that
$$
\left.
\begin{array}{r} 
\epsilon \in  \left(0,\bar{\epsilon}\right] \\[2mm]
\delta \in \left(0,\bar{\delta}\right]
\end{array}
\right\}
\ \Rightarrow \ 
\left(1 - \frac{\alpha_d}{2} \right) c <  Z_\delta(\sA_\epsilon) \le c .
$$
Now let $\delta_1 \in (0, \bar\delta)$ be small enough that $\ell \delta_1 < \alpha_d c$. 
Applying Lemma~\ref{le:synthesis} to $\sA_{\bar\epsilon/2}$ we may choose $\epsilon \in (0,\frac{\bar\epsilon}{2}]$ small enough that
$$
Z_{\bar\delta} (\sA_{\bar\epsilon/2 + \epsilon}) \le (1-\alpha_d) Z_{\delta_1}(\sA_{\bar\epsilon/2}) + \frac{\ell \delta_1}{2} \, .
$$
We then have
\[\left(1-\frac{\alpha_d}{2}\right)c <Z_{\bar\delta}\left(A_{\bar\epsilon/2+\epsilon}\right)\le (1-\alpha_d) Z_{\delta_1}(\sA_{\bar\epsilon/2}) + \frac{\ell \delta_1}{2} \leq (1-\alpha_d)c + \frac{\alpha_d c}{2}\]
which is a contradiction, and we conclude that $c=0$ as claimed.

Now, by \eqref{eq:wedges} we trivially have $\subrad(\wedge^\ell \sA_\epsilon)^{1/\ell}\leq \subrad( \sA_\epsilon) $ for every $\epsilon>0$. On the other hand since $Z_\delta(\sA_\epsilon)=0$ whenever $\delta>0$ and $\varepsilon \in (0,\epsilon_0]$ it follows from Lemma~\ref{le:z-delta} that conversely $\subrad( \sA_\epsilon)\leq \subrad(\wedge^\ell \sA_\epsilon)^{1/\ell}$ for every $\varepsilon \in (0,\epsilon_0]$, and so when $\epsilon$ belongs to this range the two quantities must be equal. Since each of the sets $\wedge^\ell \sA_\varepsilon$ is $1$-dominated we conclude using Theorem~\ref{th:lipschitz} that
\[
\liminf_{\sB \to \sA} \subrad(\sB) \le 
\lim_{\varepsilon \to 0} \subrad(\sA_\varepsilon) =\lim_{\epsilon \to 0}\subrad\left(\wedge^\ell \sA_\varepsilon\right)^{1/\ell} = \subrad\left(\wedge^\ell \sA\right)^{1/\ell} \, .
\]
This completes the proof of Theorem~\ref{th:formula-GLd}.
\end{proof}

\section{Further examples of discontinuity}\label{se:examples}

Corollary~\ref{co:iff-GLd} characterises completely the points $\sA$ of discontinuity 
of the lower spectral radius as those sets $\sA$ which satisfy
\begin{equation*} 
	\subrad(\sA) > \subrad \left( \wed^{\ell(\sA)} \sA \right)^{1/\ell(\sA)} \, .
\end{equation*}
It may be however difficult to verify this condition in concrete situations.
In this article we have so far presented only one example where this condition is satisfied, namely Example~\ref{ex:simple}. It is therefore instructive to look for more examples.

\medskip
We first observe that we may easily extend Example~\ref{ex:simple} to higher dimensions:

\begin{example}\label{ex:2}
Let $\sB_1 \in \cK(\GL_{d_1}(\R))$ and $\sB_2 \in \cK(\GL_{d_2}(\R))$, and suppose that there exists $\lambda \in \R$ such that
\[ \inf_{B_1 \in \sB_1} \sigma_{d_1}(B_1) >\lambda> \left(\inf_{\substack{B_1 \in \sB_1\\B_2 \in \sB_2}}  |(\det B_1)(\det B_2)|\right)^{\frac{1}{d_1+d_2}}.\]
Choose arbitrary matrices $R_1 \in O(d_1)$ and $R_2 \in O(d_2)$,
and define $\sA \in \cK(\GL_{d_1+d_2}(\R))$ by
\[\sA = \left\{\left(\begin{array}{cc} B_1&0\\0&B_2\end{array}\right)\colon B_1 \in \sB_1\text{ and }B_2 \in \sB_2\right\} \cup \left\{\left(\begin{array}{cc}\lambda R_1&0\\0&\lambda R_2\end{array}\right)\right\}.\]
We claim that $\ell(\sA)=d_1+d_2$ and $\subrad(\sA)>\subrad(\wed^{d_1+d_2}\sA)^{\frac{1}{d_1+d_2}}$ so that $\sA$ is a discontinuity point of the lower spectral radius.  

Let us justify these claims.
Since $\sA$ contains a scalar multiple of an isometry it is obviously not $k$-dominated for any $k<d_1+d_2$, so we have $\ell(\sA)=d_1+d_2$ as claimed. It is clear that any product of $n$ elements of $\sA$ forms a block diagonal matrix whose upper-left entry has norm at least $\lambda^n$, and it follows easily that $\subrad(\sA)=\lambda$. On the other hand we have
$$
\subrad\left(\wed^{d_1+d_2}\sA\right)^{\frac{1}{d_1+d_2}}=\left(\inf_{A \in \sA}|\det A|\right)^{\frac{1}{d_1+d_2}}
=\left(\inf_{\substack{B_1 \in \sB_1\\B_2 \in \sB_2}} 
|(\det B_1)(\det B_2)|\right)^{\frac{1}{d_1+d_2}}<\lambda=\subrad(\sA)
$$
as stated.
\end{example}

Note that if $\sB_1 \in \cK(\GL_{d_1}(\R))$ and $\sB_2 \in \cK(\GL_{d_2}(\R))$ are arbitrary then the sets $t\sB_1$ and $\sB_2$ will always meet the above criteria when $t>0$ is sufficiently large. In particular we may construct uncountably many sets of matrices which are discontinuity points of $\subrad$ and which are not pairwise similar. 
Nevertheless these examples are quite particular in the sense that there is a common invariant splitting.

In order to give more interesting examples, let us return to dimension~$2$.
Let us show how Example~\ref{ex:simple} may be generalised so as to replace the identity transformation on $\R^2$ with an arbitrary rational rotation.

\begin{example}\label{ex:still_simple}
Let $R = R_{q\pi/p}$, where $p>0$ and $q$ are relatively prime integers.
We will explain how to find $H \in \GL_2(\R)$ such that the set $\sA = \{R,H\}$ (which is obviously not $1$-dominated) satisfies $\subrad(\sA) > \subrad(\wedge^2\sA)^{1/2}$.

Choose arbitrarily a positive $\delta<\pi/(2p)$ and a one-dimensional subspace $V$ of $\R^2$.
Let $\cD$ be the set of non-zero vectors in $\R^2$ whose angle with $V$ is less than or equal to $\delta$.
Define $\cC := \cD \cup R \cD \cup \cdots \cup R^{p-1}\cD$ so that we have $R \cC = \cC$.
Let $W$ be a one-dimensional subspace of $\R^2$ that does not intersect $\cC$,
for example, $R_{\pi/(2p)} V$.
Let $P \in M_d(\R)$ be the projection with image $V$ and kernel $W$. Clearly there exists $\kappa>0$ such that $\|Pv\|\geq 2\kappa\|v\|$ for all $v \in \cC$.
If $\tilde{P}$ is a invertible matrix sufficiently close to $P$ then $\tilde{P}\cC \subseteq \cD$, $|\det \tilde{P}| < \kappa^2$ and $\|\tilde{P}v\|\geq \kappa\|v\|$ for all $v \in\cC$. Fix one such matrix $\tilde P$ and define $H:=\kappa^{-1}\tilde{P}$ and $\sA:=\{R,H\}$. We observe that $|\det H| <1$ so in particular $\subrad(\wedge^2\sA)=\min\{|\det H|, \det R\}<1$. On the other hand if $v \in \cC$ then $\|Rv\|=\|v\|$, $\|Hv\|\geq \|v\|$ and both $Rv$ and $Hv$ belong to $\cC$, so it is easily seen that every product of the matrices $R$ and $H$ has norm at least $1$. In particular we have $\subrad(\sA)=1>\subrad(\wedge^2\sA)^{1/2}$ as claimed.
\end{example}

It is interesting to note that the set $\cC$ in the example above 
satisfies all requirements from Definition~\ref{de:dominated}(\ref{i:dom-multicone})
for being a $1$-multicone, except for \emph{strict} invariance.
Such `weak $1$-multicones' cannot exist
if $\sA = \{ H, R_\theta\}$ with $\theta/\pi$ irrational.
In fact, it is an open problem whether $\subrad(\sA)>\subrad(\wed^2\sA)^{1/2}$
is possible or not in the irrational case.
We develop this question further in \S~\ref{ss:fayad}.

\section{Open questions and directions for future research}\label{se:future}

\subsection{Sets with non-invertible matrices}

The results proved in this article are valid for compact subsets of $\GL_d(\R)$. It is not clear how to extend those results to subsets of $M_d(\R)$. Let us indicate one of the difficulties, related with the notion of domination.
In circumstances where we have demonstrated discontinuity of the lower spectral radius we have done so using Definition~\ref{de:dominated}(\ref{i:dom-singular}), whereas when we have demonstrated continuity of the lower spectral radius we have used Definition~\ref{de:dominated}(\ref{i:dom-multicone}). For sets of non-invertible matrices these two properties can fail to be equivalent, as the following example illustrates:
\begin{example}\label{ex:non-dom}
Define $\sA \in \cK(M_d(\R))$ by
\[\sA:=\left\{\left(\begin{array}{ccc}2&0&0\\0&0&0\\0&0&1\end{array}\right),\left(\begin{array}{ccc}0&0&0\\0&2&0\\0&0&1\end{array}\right)\right\}.\]
Then $\sA$ is $1$-dominated in the sense of Definition~\ref{de:dominated}(\ref{i:dom-singular}) but does not satisfy Definition~\ref{de:dominated}(\ref{i:dom-multicone}). Furthermore it is not an interior point of the set of all $1$-dominated matrix sets in the sense of Definition~\ref{de:dominated}(\ref{i:dom-singular}), since for example pairs of the form
\[\sB:=\left\{\left(\begin{array}{ccc}2&0&0\\0&2^{-m}&0\\0&0&1\end{array}\right),\left(\begin{array}{ccc}0&0&0\\0&2&0\\0&0&1\end{array}\right)\right\}\]
are clearly not $1$-dominated in this sense.
\end{example}

\subsection{Continuity on sets of fixed cardinality}\label{ss:fixed_card}

In the proof of Theorem~\ref{th:formula-GLd} we were able to show that if \eqref{eq:iff-continuity} is not satisfied for a fixed set $\sA \in \cK(\GL_d(\R))$ then there exist perturbed sets $\sB \in \cK(\GL_d(\R))$ arbitrarily close to $\sA$ such that the lower spectral radius of $\sB$ is less than that of $\sA$ by a constant amount. If the original set $\sA$ has finite cardinality, however, this theorem does not yield any information about the cardinality of the perturbed set $\sB$. Theorem~\ref{th:no-min} illustrates the interest of being able to show that the perturbed set $\sB$ can be chosen with the same cardinality as $\sA$. It is therefore natural to ask whether Theorem~\ref{th:GL2+} -- in which the perturbed set has equal cardinality to the unperturbed set -- extends to $\cK(\GL_2(\R))$, and whether it admits an analogue for higher-dimensional matrices. 

We have already observed in Remark~\ref{re:monotonicity} that the assumption of positive determinant
is essential to our proof of Theorem~\ref{th:GL2+}.
The following example demonstrates that this theorem indeed does not extend directly to subsets of $\GL_2(\R)$:
\begin{example}\label{ex:no-discontinuity}
Define
\[\sA:= \{ A_1, A_2 \}, \quad \text{where} \quad
A_1 := \left(\begin{array}{cc}2&0\\0&\frac{1}{8}\end{array}\right),
\quad
A_2:= \left(\begin{array}{cc}1&0\\0&-1\end{array}\right).
\]
Then the map $\theta \mapsto R_\theta \sA$ is continuous at $\theta=0$.

Let us justify this assertion. Let
\[A_{1,\theta}:=R_\theta A_1,\qquad A_{2,\theta}:=R_\theta A_2,\]
and define $\cC:=\{(x,y) \in \R^2 \setminus \{0\} \colon |y| \le |x|\}$. If $|\theta|$ is sufficiently small then the cone $A_{1,\theta}\cC$ is strictly contained in $\cC$ and  $A_{2,\theta}A_{1,\theta}\cC\subseteq \cC$. The matrix $A_{1,\theta}$ increases the Euclidean norm of every element of $\cC$ and the matrix $A_{2,\theta}$ is an isometry: since furthermore $A_{2,\theta}^2=\mathrm{Id}$ it follows that a product $B_n\cdots B_1$ of elements of $R_\theta\sA$ such that $B_1=A_{1,\theta}$ cannot decrease the length of an element of $\cC$ and hence must have norm at least $1$. Since $\rho(A_{2,\theta})=1$ we deduce easily that $\subrad(R_\theta\sA)=1$ when $|\theta|$ is sufficiently small.
\end{example}
The set $\sA$ given in Example~\ref{ex:no-discontinuity} nonetheless is a discontinuity point of $\subrad$ on the set of all pairs of $\GL_2(\R)$-matrices. To see this, consider the set $\sA_n := \{A_1, H_n\}$ where
$$
P_n := \begin{pmatrix} 1 & e^{-2n} \\ e^{-2n} & 1 \end{pmatrix}, \quad
H_n := P_n^{-1} \begin{pmatrix} e^{-1/n} & 0 \\ 0 & -e^{1/n} \end{pmatrix} P_n.
$$
Direct calculation shows that $H_n^{2n^2}$ maps the horizontal axis onto the vertical axis.
Thus in a manner similar to Example~\ref{ex:simple} we see that $\subrad(\sA_n) = \frac{1}{2}$. 
This example emboldens us to make the following conjecture:

\begin{conjecture}\label{conj:continuity}
For each $n$, $d \geq 1$ the function $\subrad \colon \GL_d(\R)^n \to \R_+$ is continuous at $\sA$ if and only if
\[\subrad(\sA)=\subrad\left(\wed^{\ell(\sA)}\sA\right)^{1/\ell(\sA)}\]
where $\ell(\sA)$ is the smallest index of domination for $\sA$. 
\end{conjecture}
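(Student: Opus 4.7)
The plan is to split the conjecture into two directions. The ``if'' direction is direct: if $\subrad(\sA)=\subrad(\wed^{\ell(\sA)}\sA)^{1/\ell(\sA)}$ for $\sA = (A_1, \ldots, A_n)$, then by Corollary~\ref{co:iff-GLd} applied to the associated set, $\subrad$ is continuous at $\{A_1, \ldots, A_n\} \in \cK(\GL_d(\R))$. Since the natural map $\iota \colon \GL_d(\R)^n \to \cK(\GL_d(\R))$ sending a tuple to the set of its entries is Lipschitz continuous (as an easy consequence of the definition of the Hausdorff metric) and since $\subrad$ depends only on the associated set, the composition $\subrad\circ\iota$ is continuous at $\sA$, which is precisely what is required.

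The ``only if'' direction is the substantive one, and I expect it to contain the entire difficulty. Assume $\subrad(\sA) > \subrad(\wed^{\ell}\sA)^{1/\ell}$ where $\ell = \ell(\sA)$; my approach would be to reprise the proof of Theorem~\ref{th:formula-GLd} while tracking and controlling the cardinality of the perturbed sets. Recall that the central mechanism in \S\ref{se:iff-GLd} constructs products $PRP$, where $P$ is a long product of elements of $\sA$ and $R$ is a ``transport'' product of matrices drawn from a small thickening $\sA_\epsilon$ (supplied by Lemma~\ref{le:access_higher}), chosen so that the first $\ell$ singular values of $PRP$ are brought closer to their geometric mean. The obstacle is that Lemma~\ref{le:access_higher} produces $L_1, \dots, L_m$ each independently close to a prescribed matrix $A_{i_j}$: if the word $(i_1, \dots, i_m)$ repeats some index, the corresponding perturbations $L_j$ need not coincide, inflating the cardinality of the perturbed set beyond $n$. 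One must therefore establish a \emph{cardinality-preserving} accessibility lemma: given a word $i_1 \cdots i_m$ in $\{1,\dots,n\}$ for which $A_{i_m} \cdots A_{i_1}$ has a suitably large singular-value gap at index $p$, produce a single $n$-tuple $\tilde\sA = (\tilde A_1, \ldots, \tilde A_n)$ close to $\sA$ such that $(\tilde A_{i_m} \cdots \tilde A_{i_1})(E) \cap F \neq \{0\}$ for the target subspaces $E$, $F$ arising from Lemma~\ref{le:e-f-bound}.

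The main obstacle is precisely this uniform accessibility statement. A promising route is to imitate the two-dimensional argument of Theorem~\ref{th:GL2+}, where instead of perturbing each occurrence of each matrix independently one perturbs globally by composing with a single rotation $R_\theta$. In higher dimensions, or even in $\GL_2(\R)$ with matrices of mixed-sign determinant, this specific recipe fails (as Example~\ref{ex:no-discontinuity} illustrates), but the subsequent discussion shows that a richer family of cardinality-$n$ perturbations (there, conjugation combined with an eigenvalue adjustment on $A_2$) can still produce the transport. Making this systematic requires introducing a sufficiently rich finite-dimensional family $\{\tilde\sA(t)\}_{t \in U}$ of cardinality-$n$ perturbations of $\sA$, and locating $t \in U$ at which the subspace-intersection condition holds. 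The Intermediate Value Theorem argument used in Theorem~\ref{th:GL2+} relied essentially on the monotonicity of a projective angle function (see Remark~\ref{re:monotonicity}); its higher-dimensional substitute would presumably have to be a degree-theoretic or Leray--Schauder-type statement on an appropriate Grassmannian, together with a verification that the image of the perturbation family covers the relevant incidence variety with nonzero degree.

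Once such a cardinality-preserving accessibility lemma is in place, the remainder of the proof should follow the outline of \S\ref{se:iff-GLd} with only minor modifications: use Lemma~\ref{le:pivot} to locate a pivot index $p$ with $\lambda_p(P) - \lambda_{p+1}(P)$ comparable to $\zeta_\ell(P)$, invoke Lemma~\ref{le:e-f-bound} to produce the subspaces $E$ and $F$, and iterate the construction to flatten the first $\ell$ singular values and thereby drive $\subrad$ of the perturbed tuple down to $\subrad(\wed^\ell \sA)^{1/\ell}$. The $\varepsilon$-thickening $\sA_\varepsilon$ of \S\ref{se:iff-GLd} must be replaced by iterated application of the cardinality-preserving accessibility construction, and the monotonicity-in-$\varepsilon$ argument which concludes the proof of Theorem~\ref{th:formula-GLd} must be replaced by a compactness or diagonal argument within the space of cardinality-$n$ perturbations of $\sA$ of bounded Hausdorff diameter. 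All of this I would expect to be essentially bookkeeping once the accessibility step is resolved.
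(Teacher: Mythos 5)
The statement you are addressing is Conjecture~\ref{conj:continuity}: the paper offers no proof of it, and the authors explicitly state (just before \S\ref{se:future}, and again in \S\ref{ss:fixed_card}) that they are unable to prove a cardinality-preserving version of formula~\eqref{eq:formula-GLd}. Your write-up does not close that gap. The ``if'' direction is fine and is essentially immediate from Corollary~\ref{co:iff-GLd} together with the continuity of the map $\GL_d(\R)^n \to \cK(\GL_d(\R))$, as you say. But the ``only if'' direction is precisely the open problem, and your proposal does not prove it: the ``cardinality-preserving accessibility lemma'' is only named and motivated, not established. The suggestion to replace the monotonicity/Intermediate Value Theorem mechanism of Lemma~\ref{le:access_2} by a degree-theoretic argument over a finite-dimensional family of cardinality-$n$ perturbations is speculative -- you would need to exhibit a concrete family whose induced map on the relevant Grassmannian incidence variety has nonzero degree, and the paper's own evidence (Remark~\ref{re:monotonicity}, Example~\ref{ex:no-discontinuity}, and the ad hoc conjugation trick used to repair that example) shows that natural candidates such as composition with rotations already fail once determinants of mixed sign, let alone $d>2$, are allowed.

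There is also a second, independent gap that you dismiss as ``bookkeeping.'' The proof of Theorem~\ref{th:formula-GLd} does not merely apply the accessibility step once: it replaces $\sA$ by the full thickening $\sA_\epsilon$ precisely so that \emph{all} subsequent transport matrices, for arbitrarily many successive products $PRP$, lie in one fixed compact set, and the conclusion is then extracted from the monotonicity of $Z_\delta(\sA_\epsilon)$ in $\epsilon$ and $\delta$ together with the contraction estimate of Lemma~\ref{le:synthesis}, which compares $Z_{\delta_1}(\sA)$ with $Z_{\delta_2}(\sA_\epsilon)$ for the nested family $\sA \subset \sA_\epsilon$. With tuples of fixed cardinality there is no analogue of this nested family: each application of a (hypothetical) cardinality-preserving accessibility step moves the tuple itself, so the quantity being contracted is evaluated at a different point at every stage, the reference quantities $\subrad(\wed^\ell\cdot)$ and $\ell(\cdot)$ drift, and the accumulated perturbation must be kept within a prescribed $\epsilon$ over infinitely many stages. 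Your proposed ``compactness or diagonal argument within the space of cardinality-$n$ perturbations of bounded Hausdorff diameter'' is not specified, and it is exactly at this point (together with the accessibility step) that the conjecture remains open rather than being a routine adaptation of \S\ref{se:iff-GLd}.
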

If this conjecture is valid then the set
\[\left\{\sA \in \GL_d(\R)^n \colon\subrad(\sA)=\subrad\left(\wed^{\ell(\sA)}\sA\right)^{1/\ell(\sA)}\right\}\]
is precisely the set of continuity points of the upper semi-continuous function $\subrad \colon \GL_d(\R)^n \to \R_+$, and hence is a dense $G_\delta$ set. Clearly to compute $\subrad(\sA)$ on this set it is sufficient to compute the lower spectral radius of $\wed^{\ell(\sA)}\sA$, and so in the generic case the computation of $\subrad$ would -- subject to the validity of Conjecture~\ref{conj:continuity} -- be reduced to the computation of the lower spectral radii of $1$-dominated sets.

\subsection{The Lebesgue measure of the discontinuity set}\label{ss:fayad}

In the previous discussions we have noted that since the lower spectral radius is upper semi-continuous, its points of continuity form a residual set, and are thus `large' in a topological sense. In an alternative direction, for finite sets $\sA \subset \GL_d(\R)$ of fixed cardinality $n$ we could ask how large is the set of continuity points of $\subrad$ in the sense of Lebesgue measure on $\GL_d(\R)^n$. In the case of pairs of $2 \times 2$ matrices with positive determinant we believe that the set of continuity points is much smaller in the sense of Lebesgue measure than it is in the topological sense.

\begin{definition}\label{de:resist}
Let $\mathcal{H}\subset \SL_2(\R)$ denote the set of all matrices with distinct real eigenvalues, and $\mathcal{E}\subset \SL_2(\R)$ the set of all matrices with distinct non-real eigenvalues. We say that the pair $(H,R) \in \mathcal{H}\times \mathcal{E}$ \emph{resists impurities} if there exist constants $\epsilon,\lambda>0$ depending on $(H,R)$ such that if $A_n\cdots A_1$ is a product of the matrices $H$ and $R$ which features at most $\epsilon n$ instances of $R$, then $\|A_n\cdots A_1\|\geq e^{\lambda n}$. 
\end{definition}
The following conjecture was introduced in \cite[\S5.3]{BF}; some partial results may be found in \cite{AR,FK}.
\begin{conjecture}\label{conj:bf}
The set 
of pairs $(H,R) \in \mathcal{H}\times\mathcal{E}$ that resist impurities
has full Lebesgue measure in $\mathcal{H}\times\mathcal{E}$.
\end{conjecture}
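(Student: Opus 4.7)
The plan is to attempt Conjecture~\ref{conj:bf} via a combination of large‑deviation estimates for products of $\SL_2(\R)$ matrices together with diophantine analysis of the rotation number of $R$ relative to the hyperbolic splitting of $H$. First I would pass to convenient coordinates. Up to conjugation (which preserves the conclusion and is compatible with Lebesgue measure), normalise $H=\mathrm{diag}(\alpha,\alpha^{-1})$ with $\alpha>1$, and write $R=S R_{\theta} S^{-1}$ where $S\in \SL_2(\R)$ encodes the position of the invariant splitting of $R$ relative to the axes. The Lebesgue class on $\mathcal{H}\times\mathcal{E}$ is then absolutely equivalent to a product of Lebesgue measures in the parameters $(\alpha,S,\theta)$, and it suffices to prove the resistance property for almost every such triple.

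Next, I would analyse the induced projective action on $P\R^2\cong S^1$. The matrix $H$ contracts $P\R^2\setminus\{s\}$ toward the unstable direction $u$ while pushing every direction away from the stable direction $s$; the matrix $R$ acts as a conjugate of rigid rotation by $\theta$. Any product with at most $\epsilon n$ occurrences of $R$ can be written as a concatenation $H^{n_{k+1}}RH^{n_k}R\cdots RH^{n_1}$ with $k\le \epsilon n$, $\sum n_i=n-k$, and $n_i\ge 0$. Following a test vector $v_0$, set $v_j:=R H^{n_j} v_{j-1}$; then a single $H^{n_j}$ block applied to a unit vector at projective distance $\delta_j$ from $s$ expands its length by a factor comparable to $\alpha^{n_j}\min(1,\delta_j)$. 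Taking logarithms,
\begin{equation*}
\log\|A_n\cdots A_1 v_0\| \;\ge\; (n-k)\log\alpha \;-\; \sum_{j=1}^{k+1} \log\tfrac{1}{\min(1,\delta_j)} \;-\; O(k).
\end{equation*}
Thus the question reduces to controlling the \emph{penalty sum} $\Sigma:=\sum_j \log(1/\delta_j)$: proving the conjecture is essentially proving that along the $R$‑orbit of $u$, the number of indices $j\le N$ with $\delta_j<\delta$ is uniformly bounded by $O(\delta^{\eta}N)$ for some $\eta>0$, for Lebesgue‑a.e.~$(H,R)$.

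The strategy would then be a Borel–Cantelli argument on the set
\begin{equation*}
\mathcal{B}_N(\epsilon,\lambda):=\big\{(H,R):\text{some product of length }N\text{ with }\le \epsilon N \text{ R's has norm}<e^{\lambda N}\big\} \, .
\end{equation*}
For each admissible word of length $N$ with $k\le \epsilon N$ instances of $R$, the bad set for that particular word is cut out by an explicit real‑analytic relation in the parameters, of codimension one in $(S,\theta)$, whose transversal thickness I would estimate using sharpness of the projective contraction by $H^{n_j}$. Summing over at most $\binom{N}{\epsilon N}\approx e^{H(\epsilon)N}$ words and using that each contributes measure $\lesssim e^{-cN}$ (for some $c=c(\alpha,\lambda)>0$) would give $\mathrm{Leb}(\mathcal{B}_N)\le e^{-(c-H(\epsilon))N}$, which is summable once $\epsilon$ is chosen so that the binary entropy $H(\epsilon)<c$.

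The main obstacle, which is precisely what keeps the conjecture open, is that this naive counting ignores the fact that the direction $v_j$ is not a free parameter: it is determined by the history. Obtaining the exponent $c$ uniformly therefore requires a quantitative equidistribution statement for the sequence of directions $(v_j)$ under a typical $R$ — in effect a large‑deviations principle for the projective cocycle over the rotation $R_\theta$. For diophantine $\theta$ this could plausibly be established by a KAM or Avila–Krikorian–style renormalisation of the corresponding $\SL_2(\R)$ cocycle over a circle rotation, producing an invariant almost‑cone structure that persists under a positive density of $R$‑insertions; for Liouville $\theta$ one would have to exploit the fact that resonances are polynomially rare in $N$. Coupling these regimes into a single measure‑theoretic statement, so as to obtain a \emph{uniform} $\epsilon,\lambda>0$ valid on a full‑measure set, is the principal technical difficulty, and partial results in \cite{AR,FK} suggest that substantial new input from the theory of quasi‑periodic cocycles will be required to carry the plan through.
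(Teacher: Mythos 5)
This statement is a \emph{conjecture}, not a theorem of the paper: the authors attribute it to \cite{BF} and explicitly record that only partial results are known (\cite{AR,FK}), and that even the basic question of whether $\subrad(\sA)>\subrad(\wed^2\sA)^{1/2}$ can occur for an irrational rotation is open. So there is no proof in the paper for your argument to be measured against, and your text does not supply one either: it is a strategy outline whose decisive step is left unproved, as you yourself acknowledge in the final paragraph.

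Concretely, the gap sits in the Borel--Cantelli step. Your reduction to the penalty sum $\Sigma=\sum_j\log(1/\delta_j)$ is sound, but the measure bound $\mathrm{Leb}(\mathcal{B}_N\text{, fixed word})\lesssim e^{-cN}$ with $c$ uniform over all words with $k\le\epsilon N$ rotations is exactly the content of the conjecture, not a lemma one can quote. Because each direction $v_j$ is a highly nonlinear function of $(\alpha,S,\theta)$ and of the entire prefix of the word, the ``bad'' set for a given word is not a tube of controlled transversal thickness around a codimension-one variety: near-resonances (powers of $R$ carrying the expanding direction of an $H$-block close to the contracting direction of the next) can stack, and estimating how often this happens for typical $\theta$ is precisely the quantitative equidistribution/large-deviation input you defer to a hypothetical KAM or renormalisation argument. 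Moreover the assertion that for Liouville $\theta$ ``resonances are polynomially rare in $N$'' is unsupported and almost certainly the hardest regime, since the only pairs currently known to resist impurities are the rational, combinatorially non-resonant ones of Example~\ref{ex:still_simple}; for Liouville rotation numbers one expects abnormally deep near-resonances rather than rare ones. Until the word-uniform exponential measure estimate (or an equivalent uniform non-resonance statement) is actually proved, the entropy-versus-exponent comparison $H(\epsilon)<c$ has nothing to compare against, and the argument does not close.
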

The above conjecture, if correct, implies that the set of discontinuities of $\subrad$ on $\GL_2(\R)^2$ is large in the sense of Lebesgue measure. We note:
\begin{proposition}
Conjecture~\ref{conj:bf} is equivalent to the following statement: the set of discontinuities of $\subrad \colon \GL_2(\R)^2\to \R_+$ on the open set
\[\mathcal{U}:=\left\{(\alpha H,\beta R)\in \GL_2^+(\R)^2 \colon H\in\mathcal{H},\text{ }R\in \mathcal{E}\text{ and }\beta>\alpha>0\right\}\]
has full Lebesgue measure in that set.
\end{proposition}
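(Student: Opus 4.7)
The plan is to observe that by Corollary~\ref{co:iff-GLd} the discontinuity set of $\subrad$ on $\mathcal{U}$ admits an explicit description, that this description corresponds under a natural parametrization to the resistance set crossed with a positive-measure parameter set, and that the equivalence of the two full-measure statements then follows by Fubini.

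First I would introduce the diffeomorphism $\Phi \colon \mathcal{H} \times \mathcal{E} \times T \to \mathcal{U}$ given by $\Phi(H, R, \alpha, \beta) = (\alpha H, \beta R)$, where $T := \{(\alpha, \beta) \colon 0 < \alpha < \beta\}$, whose inverse sends $(A,B) \in \mathcal{U}$ to $\bigl(A/\sqrt{\det A},\,B/\sqrt{\det B},\,\sqrt{\det A},\,\sqrt{\det B}\bigr)$. For any $\sA = \{\alpha H, \beta R\} \in \mathcal{U}$, since $R$ is conjugate in $\GL_2(\R)$ to a rotation the ratio $\sigma_2((\beta R)^n)/\sigma_1((\beta R)^n)$ stays bounded away from $0$, so $\sA$ is never $1$-dominated and hence $\ell(\sA) = 2$. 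Since $\beta > \alpha > 0$, one computes $\subrad(\wed^2 \sA)^{1/2} = \min(\alpha, \beta) = \alpha$, and Corollary~\ref{co:iff-GLd} identifies the discontinuity set $D \subseteq \mathcal{U}$ of $\subrad|_\mathcal{U}$ as the locus where $\subrad(\sA) > \alpha$.

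The central step is to verify that $(H, R)$ resists impurities if and only if $\subrad(\{\alpha H, \beta R\}) > \alpha$ for one, equivalently every, $\beta > \alpha > 0$. Factoring out scalars, for any product $A_n \cdots A_1 \in \{\alpha H, \beta R\}^n$ containing $m$ copies of $\beta R$ one has
$$
\|A_n \cdots A_1\|^{1/n} = \alpha \, (\beta/\alpha)^{m/n} \, \|P'\|^{1/n},
$$
where $P' \in \{H, R\}^n$ is the corresponding unscaled product. If $(H, R)$ resists impurities with constants $\epsilon_0, \lambda_0$, then products with $m \leq \epsilon_0 n$ satisfy $\|P'\| \geq e^{\lambda_0 n}$ and products with $m > \epsilon_0 n$ satisfy $\|P'\| \geq 1$ (since $\det P' = 1$); in either case $\|A_n \cdots A_1\|^{1/n} \geq \alpha \min(e^{\lambda_0}, (\beta/\alpha)^{\epsilon_0}) > \alpha$, so $\subrad(\sA) > \alpha$. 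Conversely, a uniform gap $\subrad(\sA) \geq \alpha(1 + \eta)$ — which holds thanks to the infimum expression~\eqref{eq:subrad-inf} — forces $\|P'\|^{1/n} \geq (1+\eta)(\alpha/\beta)^{m/n}$ for every product, and picking $\epsilon \in (0, \log(1+\eta)/\log(\beta/\alpha))$ then produces exponential growth of $\|P'\|$ for all products with $m \leq \epsilon n$, establishing resistance.

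Finally, denoting by $S \subseteq \mathcal{H} \times \mathcal{E}$ the set of impurity-resistant pairs, the equivalence above gives $\Phi^{-1}(D) = S \times T$. Since $\Phi$ is a diffeomorphism it preserves Lebesgue-null sets, and since $T$ is a nonempty open subset of $\R^2$ (hence of positive Lebesgue measure) Fubini's theorem yields that $D$ has full Lebesgue measure in $\mathcal{U}$ if and only if $S$ has full Lebesgue measure in $\mathcal{H} \times \mathcal{E}$, which is exactly the desired equivalence. The step I expect to require the most care is the two-sided scaling comparison in the core equivalence, where one must choose $\epsilon$ carefully so that the shortage of $R$-factors is overwhelmed by the uniform spectral gap; the remaining arguments are routine, since Corollary~\ref{co:iff-GLd} has already supplied the hard characterization of the discontinuity set.
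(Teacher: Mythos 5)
There is a genuine gap in your identification of the discontinuity set. You invoke Corollary~\ref{co:iff-GLd} to conclude that every pair $\sA=(\alpha H,\beta R)\in\cU$ with $\subrad(\sA)>\alpha$ is a discontinuity point of $\subrad|_{\cU}$. But Corollary~\ref{co:iff-GLd} characterises continuity of $\subrad$ on $\cK(\GL_2(\R))$ with the Hausdorff metric, where the perturbing sets $\sB\to\sA$ produced by Theorem~\ref{th:formula-GLd} may have strictly larger cardinality than $\sA$ (in the proof they are whole $\epsilon$-neighbourhoods $\sA_\epsilon$). Discontinuity in that sense does \emph{not} imply discontinuity of $\subrad$ viewed as a function on pairs $\GL_2(\R)^2$, which is what the proposition is about; indeed the paper emphasises exactly this distinction, and the cardinality-preserving analogue of Corollary~\ref{co:iff-GLd} is the still-open Conjecture~\ref{conj:continuity}. (Your citation is adequate only for the easy inclusion, namely that $\subrad(\sA)=\alpha$ forces continuity, which also follows more simply from upper semi-continuity of $\subrad$ together with continuity of $\sA\mapsto\subrad(\wed^2\sA)^{1/2}$.) The correct tool for the hard inclusion, and the one the paper uses, is Theorem~\ref{th:GL2+}: every $\sA\in\cU$ lies in $\cK(\GL_2^+(\R))$ (since $H,R\in\SL_2(\R)$ and $\alpha,\beta>0$), is not $1$-dominated (as you correctly note, because $\beta R$ is elliptic), and satisfies $\subrad(\sA)>\subrad(\wed^2\sA)^{1/2}$; hence $\theta\mapsto\subrad(R_\theta\sA)$ is discontinuous at $\theta=0$, and since $R_\theta\sA$ is again a pair converging to $\sA$ as $\theta\to 0$, the pair $\sA$ is a discontinuity point of $\subrad$ on $\GL_2(\R)^2$. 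With this substitution your argument closes.

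The remainder of your proposal is correct and close to the paper's. Your two-sided scaling computation (resistance with constants $\epsilon_0,\lambda_0$ gives $\subrad\ge\alpha\min(e^{\lambda_0},(\beta/\alpha)^{\epsilon_0})>\alpha$; conversely a gap $\subrad\ge\alpha(1+\eta)$ together with \eqref{eq:subrad-inf} and $\|P'\|=\alpha^{-(n-m)}\beta^{-m}\|A_n\cdots A_1\|$ yields resistance for any $\epsilon<\log(1+\eta)/\log(\beta/\alpha)$) is exactly the paper's argument. Your packaging is slightly different and mildly cleaner in the converse direction: by proving the equivalence uniformly in $(\alpha,\beta)$ you get the exact product structure $\Phi^{-1}(D)=S\times T$ and apply Fubini once, whereas the paper applies Fubini only to extract a single full-measure slice $(\alpha_0,\beta_0)$ and then proves resistance on that slice; both routes are fine, provided one notes (as you implicitly do via the explicit diffeomorphism $\Phi$) that the parametrisation carries null sets to null sets.
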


\begin{proof} 
For all $\sA \in \GL_2(\R)^2$, we have
$\subrad(\sA) \ge \subrad(\wedge^2\sA)^{1/2}$.
Since the left-hand side of this inequality is an upper-semicontinuous function,
and the right-hand side is continuous by \eqref{eq:min-det},
the points $\sA$ where equality holds are points of continuity of $\subrad \colon \GL_2(\R)^2\to \R_+$.
The converse to this statement is true and follows by Theorem~\ref{th:GL2+}.
We therefore must show that Conjecture~\ref{conj:bf} is valid if and only if almost every $\sA=(\alpha H, \beta R) \in \cU$ satisfies $\subrad(\sA)>\alpha$. 

Let us first show that Conjecture~\ref{conj:bf} implies the claimed statement. Clearly the set of all $(\alpha H, \beta R) \in \cU$ such that $(H,R)$ resists impurities has full Lebesgue measure, so it suffices to show that every such pair has lower spectral radius greater than $\alpha$. Given such $H,R,\alpha,\beta$ let $\epsilon,\lambda$ be the constants associated to the pair $(H,R)$. Let $A_n\cdots A_1$ be a product of elements of the matrices $\alpha H$ and $\beta R$, which we assume to contain exactly $k_1$ instances of $\alpha H$ and exactly $k_2$ instances of $\beta R$. Write $A_n\cdots A_1=\alpha^{k_1}\beta^{k_2}B_n\cdots B_1$ where each $B_i$ is either $H$ or $R$. If $k_2 \leq \epsilon n$ then
\[\|A_n\cdots A_1\| \geq \alpha^n \|B_n\cdots B_1\|\geq e^{\lambda n}\alpha^n,\]
and if $k_2>\epsilon n$ then
\[\|A_n \cdots A_1\| \geq \alpha^{k_1}\beta^{k_2} \geq \alpha^{(1-\epsilon)n} \beta^{\epsilon n},\]
so
\[\subrad(\sA) \geq \min\left\{e^\lambda \alpha,\alpha^{1-\epsilon}\beta^{\epsilon}\right\}>\alpha\]
as required.

Conversely let us suppose that the set
\[\{(\alpha H, \beta R) \in \cU \colon \subrad(\{\alpha H, \beta R\})>\alpha\}\]
has full Lebesgue measure in $\cU$. By integrating over the parameters $\alpha,\beta$ and using Fubini's theorem we deduce that there exist $\alpha_0,\beta_0$ such that $\beta_0>\alpha_0>0$ and the set
\[\left\{(H,R) \in \mathcal{H} \times \mathcal{E} \colon \subrad\left(\{\alpha_0 H, \beta_0 R\}\right)>\alpha_0\right\}\]
has full Lebesgue measure in $\mathcal{H}\times\mathcal{E}$. It suffices to show that every element of this set resists impurities. Given such $(H,R)$ let us write $\subrad(\left\{\alpha_0 H,\beta_0 R\right\})=e^{2\delta} \alpha_0>\alpha_0$. If $(\alpha_0 H,\beta_0 R)$ does not resist impurities then for each $\varepsilon>0$ we may find a product $B_n\cdots B_1$ of the matrices $H$ and $R$, with $k_1$ instances of the former matrix and $k_2<\varepsilon n$ instances of the latter matrix, such that $\|B_n\cdots B_1\| \leq e^{n\delta}$. Hence
\[e^{2n\delta}\alpha_0^n =\subrad\left(\{\alpha_0 H, \beta_0 R\}\right)^n \leq \alpha_0^{k_1}\beta_0^{k_2}\|B_n \cdots B_1\| \leq  \alpha_0^{(1-\varepsilon)n}\beta_0^{\varepsilon n} e^{n\delta}\]
and therefore
\[ e^\delta\alpha_0<e^{2\delta}\alpha_0 \leq e^\delta \alpha_0^{1-\varepsilon}\beta_0^\varepsilon.\]
Letting $\varepsilon \to 0$ yields a contradiction, and so that $(H,R)$ resists impurities as claimed.
\end{proof}
Motivated by this correspondence we pose the following weaker version of Conjecture~\ref{conj:bf}:
\begin{conjecture}
The set of discontinuities of $\subrad \colon \GL_2(\R)^2 \to \R_+$ has positive Lebesgue measure.
\end{conjecture}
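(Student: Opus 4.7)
The plan is to use the preceding proposition to reduce the weaker conjecture to the construction of a positive-measure family of impurity-resisting pairs in $\mathcal{H}\times\mathcal{E}$, and then to produce such a family via a quantitative cone argument in the projective dynamics. The key observation is that the argument given in the preceding proposition in fact establishes an individual statement: a pair $(H,R)\in\mathcal{H}\times\mathcal{E}$ resists impurities if and only if $\subrad(\{\alpha H,\beta R\})>\alpha$ for some (equivalently, every) $\beta>\alpha>0$. Since $(H,R,\alpha,\beta)\mapsto(\alpha H,\beta R)$ is a smooth diffeomorphism onto $\mathcal{U}$, Fubini's theorem reduces the problem to showing that the set of impurity-resisting pairs has positive Lebesgue measure in $\mathcal{H}\times\mathcal{E}$.

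To produce such a family, I would consider pairs $(H,R)$ with $H$ strongly hyperbolic (eigenvalue ratio $\lambda\gg 1$) and $R$ elliptic close to the identity, reasoning that the rare, small-angle rotations can only slightly perturb the projective direction away from the unstable eigenvector $v^{+}$ of $H$, and that the dominant $H$-expansion then restores it exponentially quickly. Concretely, I would build a narrow cone $\mathcal{K}\subset\R^{2}$ around $v^{+}$ satisfying $H\mathcal{K}\subset\mathcal{K}$ strictly with Euclidean expansion rate close to $\lambda$, and $R\mathcal{K}\subset\mathcal{K}'$ for a slightly larger cone $\mathcal{K}'$ from which a uniformly bounded number of subsequent $H$-applications returns the orbit to $\mathcal{K}$ with at most bounded multiplicative loss. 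Iterating over the $m\le\epsilon n$ blocks separated by occurrences of $R$ in a word, this should yield a pointwise expansion $\|A_{n}\cdots A_{1}v\|\ge c\lambda_{*}^{n}\|v\|$ uniformly for $v\in\mathcal{K}$ and some $\lambda_{*}>1$, provided $\epsilon$ is sufficiently small and $\lambda$ sufficiently large. A dual version of the estimate applied to the transpose product then upgrades this pointwise bound to the uniform operator-norm bound $\|A_{n}\cdots A_{1}\|\ge c\lambda_{*}^{n}$ required for impurity resistance.

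The main obstacle, and the reason that Conjecture~\ref{conj:bf} itself remains open, is twofold. First, no cone in $\R^{2}$ can be genuinely invariant under an elliptic matrix, so the quantitative interplay between the projective drift produced by each $R$ and the restoring contraction of the subsequent $H$'s must be controlled uniformly over the entire combinatorial family of $R$-sparse words; naive operator-norm perturbation estimates are insufficient because the multiplicative error compounds exponentially in the word length. Second, as a consequence, it is not even clear a priori that impurity resistance is an open property in the $C^{0}$ topology, so an open family of resisting pairs cannot be obtained simply by perturbing a single known example such as those of Example~\ref{ex:still_simple}. Plausible routes around these difficulties are to adapt existing partial progress on Conjecture~\ref{conj:bf} in the Diophantine regime for the rotation angle (cf.\ \cite{AR,FK}), or to couple the cone construction with large-deviation estimates for random products of $H$ and $R$ in the spirit of Le Page's theorem, so that the generic positivity of the Lyapunov exponent on $\{H,R\}$ can be upgraded to the uniform lower bound on $R$-sparse words that is demanded by the definition of resistance.
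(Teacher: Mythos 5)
This statement is posed in the paper as a \emph{conjecture}: the paper offers no proof, and your proposal does not close the gap either --- it is a research plan whose decisive step is exactly the open problem. Your first reduction is fine and matches what the paper already proves: the proof of the proposition preceding the conjecture shows, pointwise, that $(H,R)\in\mathcal{H}\times\mathcal{E}$ resists impurities if and only if $\subrad(\{\alpha H,\beta R\})>\alpha$, so by Fubini a positive-measure set of impurity-resisting pairs would indeed yield a positive-measure set of discontinuities of $\subrad$ on $\GL_2(\R)^2$. But the second step --- actually exhibiting a positive-measure (or even a single new) family of impurity-resisting pairs --- is a weak form of Conjecture~\ref{conj:bf}, and your cone argument cannot deliver it. The words counted in Definition~\ref{de:resist} may contain \emph{consecutive} blocks $R^k$ with $k$ as large as $\epsilon n$, so the rotation can move the projective direction anywhere; in particular, for a word of the form $H^m R^k H^m$ the norm stays bounded as soon as some power $R^k$ maps the unstable direction of $H$ within roughly $\lambda^{-2m}$ of the stable direction, with $k$ allowed to grow linearly in $m$. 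Whether such exponentially good approximations occur is governed by the arithmetic (Liouville versus Diophantine) properties of the rotation number of $R$ in the eigenbasis of $H$, not by any soft cone inclusion or openness argument: your claim that a ``uniformly bounded number of subsequent $H$-applications'' restores the cone is false precisely because the recovery time after a near-alignment with the stable direction is of order $\log(1/\delta)/\log\lambda$ and $\delta$ is not under your control.

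The escape routes you mention do not repair this. The only known resisting pairs are the rational-angle examples of Example~\ref{ex:still_simple}, whose mechanism (a weakly invariant finite union of cones) is destroyed by an arbitrarily small change of the angle, and, as you yourself note, impurity resistance is not known to be an open property, so no perturbative or positive-measure family can be extracted from them. Le Page-type large deviations control the norm growth of \emph{typical} words with respect to a stationary measure, whereas Definition~\ref{de:resist} demands a uniform exponential lower bound over \emph{all} $R$-sparse words; bridging that gap between almost-every and uniform statements is exactly what the partial results \cite{AR,FK} achieve only under restrictive hypotheses. So the proposal identifies the correct reduction but leaves the essential construction unproved, which is why the paper states this assertion as a conjecture rather than a theorem.
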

It is known that pairs of matrices $(H,R_\theta)$ which resist impurities exist, and indeed by the above arguments it may be seen that the pairs of matrices in Example~\ref{ex:still_simple} have this property when their determinants are normalised to $1$. Unfortunately these are essentially the only examples known to us. These examples have the property that $\theta / \pi$ is rational in such a manner that powers of $R_\theta$ cannot map the expanding subspace of $H$ onto, or even arbitrarily close to, the contracting subspace of $H$. If Conjecture~\ref{conj:bf} is true then for typical $(H,R) \in \mathcal{H} \times \mathcal{E}$,
although there are products that map the expanding subspace of $H$ 
(or that of any other matrix in $\mathcal{H}$ formed from products of matrices $R$ and $H$)
very close to its contracting subspace, these products must contain many instances of $R$.

\begin{ack}
We thank Christian Bonatti and Micha{\l} Rams for important discussions. We also thank the two referees for a number of corrections and suggestions.
\end{ack}



\affiliationone{Jairo Bochi \\
Departamento de Matem\'{a}tica, Pontif\'{i}cia Universidade Cat\'{o}lica do Rio de Janeiro, Rua Mq.\ S.\ Vicente 225, Rio de Janeiro\\
Brazil
}
\affiliationtwo{Ian D. Morris\\
Department of Mathematics, University of Surrey, Guildford GU2 7XH\\
United Kingdom\\
\email{i.morris@surrey.ac.uk}
}
\affiliationthree{%
Current address:\\
Facultad de Matem\'{a}ticas, Pontificia Universidad Cat\'{o}lica de
Chile, Av.\ Vicu\~{n}a Mackenna~4860, Santiago\\
Chile
}
\affiliationfour{~} 
\end{document}